\def\xx{\mathbf{x}}
\def\oo{\mathbf{0}}
\theoremstyle{plain}
\newtheorem{theorem}{Theorem}
\newtheorem{proposition}{Proposition}
\newtheorem{conjecture}{Conjecture}
\newtheorem{lemma}{Lemma}
\newtheorem{fact}{Fact}
\newtheorem{corollary}{Corollary}
\newtheorem{problem}{Question}
\theoremstyle{definition}
\newtheorem{definition}{Definition}
\newtheorem{example}{Example}
\newtheorem{notation}{Notation}
\theoremstyle{remark}
\newtheorem{remark}{Remark}
\newcommand{\rbullet}{\textcolor{red}{\bullet}}
\newcommand{\knot}{\operatorname{knot}}
\newcommand{\cNsum}{ \sum_{\check{n}_{b-1} + 1 \le i \le \check{n}_d
}
}
\newcommand{\cat}{\alpha_i \tilde{x}^*_i}
\newcommand{\caFsum}{\sum_{\check{n}_{b-1} + 1 \le i \le \check{n}_{d}} \alpha_i \tilde{x}^*_i }
\newcommand{\cFsum}{\sum_{\check{n}_{b-1} + 1 \le i \le \check{n}_{d}} \alpha_i x^*_i}
\newcommand{\scFsum}{\sum_{\check{n}_{b-1} + 1 \le i \le \check{n}_{d}} \alpha_i}
\newcommand{\caFsumIrr}{\sum_{i \in \cirri} \alpha_i \tilde{x}^*_i
}
\newcommand{\cFsumIrr}{\sum_{i \in \cirri} \alpha_i x^*_i}
\newcommand{\caFsumConn}{\sum_{i \in \cconni} \alpha_i \tilde{x}^*_i 
}
\newcommand{\cFsumConn}{\sum_{i \in \cconni} \alpha_i x^*_i}
\newcommand{\cineq}{\check{n}_{b-1} + 1 \le i \le \check{n}_{d}}
\newcommand{\cxi}{{x}^*_i}
\newcommand{\cxj}{{x}^*_j}
\newcommand{\cyc}{\operatorname{cyc}}
\newcommand{\rev}{\operatorname{rev}}
\newcommand{\Rep}{R_{\epsilon_1 \epsilon_2 \epsilon_3 \epsilon_4 \epsilon_5}}
\newcommand{\cRep}{\check{R}_{\epsilon_1 \epsilon_2 \epsilon_3 \epsilon_4 \epsilon_5}}
\newcommand{\vs}{\vspace{1mm}}
\newcommand{\ax}{~
\begin{picture}(10,12)
\qbezier(5,10.5)(5.6,10.8)(6.2,11.1)
\qbezier(5,10.3)(5.5,9.8)(6,9.3)
\put(5,3){\circle{15}}
\put(0,-2){\vector(1,1){10}}
\put(10,-2){\vector(-1,1){10}}
\end{picture}~~~
}
\newcommand{\atra}{~
\,\begin{picture}(10,13)
\qbezier(6,10.5)(6.6,10.8)(7.2,11.1)
\qbezier(6,10.3)(6.5,9.8)(7,9.3)
\put(5,3){\circle{15}}
\put(0,-2){\vector(1,1){10}}
\put(10,-2){\vector(-1,1){10}}
\put(5,-5){\vector(0,1){15}}
\end{picture}~~~
}
\newcommand{\atrb}{~
\,\begin{picture}(10,13)
\qbezier(6,10.5)(6.6,10.8)(7.2,11.1)
\qbezier(6,10.3)(6.5,9.8)(7,9.3)
\put(5,3){\circle{15}}
\put(0,-2){\vector(1,1){10}}
\put(10,-2){\vector(-1,1){10}}
\put(5,10){\vector(0,-1){15}}
\end{picture}~~~
}
\newcommand{\aha}{~
\begin{picture}(10,13)
\qbezier(5,10.5)(5.6,10.8)(6.2,11.1)
\qbezier(5,10.3)(5.5,9.8)(6,9.3)
\put(5,3){\circle{15}}
\put(2,-4){\vector(0,1){14}}
\put(8,10){\vector(0,-1){14}}
\put(-3,3){\vector(1,0){16}}
\end{picture}~~~
}
\newcommand{\ahb}{~
\begin{picture}(10,13)
\qbezier(5,10.5)(5.6,10.8)(6.2,11.1)
\qbezier(5,10.3)(5.5,9.8)(6,9.3)
\put(5,3){\circle{15}}
\put(2,10){\vector(0,-1){14}}
\put(8,-4){\vector(0,1){14}}
\put(-3,3){\vector(1,0){16}}
\end{picture}~~~
}
\newcommand{\ahc}{~
\begin{picture}(10,13)
\qbezier(5,10.5)(5.6,10.8)(6.2,11.1)
\qbezier(5,10.3)(5.5,9.8)(6,9.3)
\put(5,3){\circle{15}}
\put(2,-4){\vector(0,1){14}}
\put(8,-4){\vector(0,1){14}}
\put(-3,3){\vector(1,0){16}}
\end{picture}~~~
}
\newcommand{\ahd}{~
\begin{picture}(10,13)
\qbezier(5,10.5)(5.6,10.8)(6.2,11.1)
\qbezier(5,10.3)(5.5,9.8)(6,9.3)
\put(5,3){\circle{15}}
\put(2,10){\vector(0,-1){14}}
\put(8,10){\vector(0,-1){14}}
\put(-3,3){\vector(1,0){16}}
\end{picture}~~~
}
\newcommand{\achordtwo}{\,
\begin{picture}(10,12)
\qbezier(5,10.5)(5.6,10.8)(6.2,11.1)
\qbezier(5,10.3)(5.5,9.8)(6,9.3)
\put(5,3){\circle{15}}
\put(-2,1){\vector(2,1){13}}
\put(12,1){\vector(-2,1){13}}
\put(-2,-1){\vector(1,0){13}}
\end{picture}~
}
\newcommand{\bchordtwo}{\,
\begin{picture}(10,12)
\qbezier(5,10.5)(5.6,10.8)(6.2,11.1)
\qbezier(5,10.3)(5.5,9.8)(6,9.3)
\put(5,3){\circle{15}}
\put(-2,1){\vector(2,1){13}}
\put(12,1){\vector(-2,1){13}}
\put(12,-1){\vector(-1,0){13}}
\end{picture}~
}
\newcommand{\cchordtwo}{\,
\begin{picture}(10,12)
\qbezier(5,10.5)(5.6,10.8)(6.2,11.1)
\qbezier(5,10.3)(5.5,9.8)(6,9.3)
\put(5,3){\circle{15}}
\put(-2,1){\vector(2,1){13}}
\put(-1,8){\vector(2,-1){13}}
\put(-2,-1){\vector(1,0){13}}
\end{picture}~
}
\newcommand{\dchordtwo}{\,
\begin{picture}(10,12)
\qbezier(5,10.5)(5.6,10.8)(6.2,11.1)
\qbezier(5,10.3)(5.5,9.8)(6,9.3)
\put(5,3){\circle{15}}
\put(-2,1){\vector(2,1){13}}
\put(-1,8){\vector(2,-1){13}}
\put(12,-1){\vector(-1,0){13}}
\end{picture}~
}
\newcommand{\echordtwo}{\,
\begin{picture}(10,12)
\qbezier(5,10.5)(5.6,10.8)(6.2,11.1)
\qbezier(5,10.3)(5.5,9.8)(6,9.3)
\put(5,3){\circle{15}}
\put(11,8){\vector(-2,-1){13}}
\put(-1,8){\vector(2,-1){13}}
\put(-2,-1){\vector(1,0){13}}
\end{picture}~
}
\newcommand{\fchordtwo}{\,
\begin{picture}(10,12)
\qbezier(5,10.5)(5.6,10.8)(6.2,11.1)
\qbezier(5,10.3)(5.5,9.8)(6,9.3)
\put(5,3){\circle{15}}
\put(11,8){\vector(-2,-1){13}}
\put(-1,8){\vector(2,-1){13}}
\put(12,-1){\vector(-1,0){13}}
\end{picture}~
}
\newcommand{\gchordtwo}{\,
\begin{picture}(10,12)
\qbezier(5,10.5)(5.6,10.8)(6.2,11.1)
\qbezier(5,10.3)(5.5,9.8)(6,9.3)
\put(5,3){\circle{15}}
\put(11,8){\vector(-2,-1){13}}
\put(12,1){\vector(-2,1){13}}
\put(-2,-1){\vector(1,0){13}}
\end{picture}~
}
\newcommand{\hchordtwo}{\,
\begin{picture}(10,12)
\qbezier(5,10.5)(5.6,10.8)(6.2,11.1)
\qbezier(5,10.3)(5.5,9.8)(6,9.3)
\put(5,3){\circle{15}}
\put(11,8){\vector(-2,-1){13}}
\put(12,1){\vector(-2,1){13}}
\put(12,-1){\vector(-1,0){13}}
\end{picture}~
}
\newcommand{\achordth}{~
\begin{picture}(10,12)
\qbezier(5,10.5)(5.6,10.8)(6.2,11.1)
\qbezier(5,10.3)(5.5,9.8)(6,9.3)
\put(5,3){\circle{15}}
\put(2,-4){\vector(0,1){14}}
\put(8,10){\vector(0,-1){14}}
\end{picture}~~~}
\newcommand{\bchordth}{~
\begin{picture}(10,12)
\qbezier(5,10.5)(5.6,10.8)(6.2,11.1)
\qbezier(5,10.3)(5.5,9.8)(6,9.3)
\put(5,3){\circle{15}}
\put(2,10){\vector(0,-1){14}}
\put(8,-4){\vector(0,1){14}}
\end{picture}~~~}
\newcommand{\bchordthb}{~
\begin{picture}(10,12)
\qbezier(5,10.5)(5.6,10.8)(6.2,11.1)
\qbezier(5,10.3)(5.5,9.8)(6,9.3)
\put(5,3){\circle{15}}
\put(2,-4){\vector(0,1){14}}
\put(8,-4){\vector(0,1){14}}
\end{picture}~~~}
\newcommand{\ai}{\begin{picture}(10,12)
\qbezier(5,10.5)(5.6,10.8)(6.2,11.1)
\qbezier(5,10.3)(5.5,9.8)(6,9.3)
\put(5,3){\circle{15}}
\put(11,6){\vector(-1,0){13}}
\put(12,2){\vector(-1,0){15}}
\put(12,-1){\vector(-1,0){13}}
\end{picture}
}
\newcommand{\aiii}{
\begin{picture}(10,12)
\qbezier(5,10.5)(5.6,10.8)(6.2,11.1)
\qbezier(5,10.3)(5.5,9.8)(6,9.3)
\put(5,3){\circle{15}}
\put(11,6){\vector(-1,0){13}}
\put(-2,2){\vector(1,0){15}}
\put(12,-1){\vector(-1,0){13}}
\end{picture}
}
\newcommand{\av}{
\begin{picture}(10,12)
\qbezier(5,10.5)(5.6,10.8)(6.2,11.1)
\qbezier(5,10.3)(5.5,9.8)(6,9.3)
\put(5,3){\circle{15}}
\put(-2,6){\vector(1,0){13}}
\put(-2,2){\vector(1,0){15}}
\put(12,-1){\vector(-1,0){13}}
\end{picture}
}
\newcommand{\aii}{
\begin{picture}(10,12)
\qbezier(5,10.5)(5.6,10.8)(6.2,11.1)
\qbezier(5,10.3)(5.5,9.8)(6,9.3)
\put(5,3){\circle{15}}
\put(12,6){\vector(-1,0){13}}
\put(12,2){\vector(-1,0){15}}
\put(-2,-1){\vector(1,0){13}}
\end{picture}
}
\newcommand{\bi}{
\begin{picture}(10,12)
\qbezier(5,10.5)(5.6,10.8)(6.2,11.1)
\qbezier(5,10.3)(5.5,9.8)(6,9.3)
\put(5,3){\circle{15}}
\put(4,10){\vector(-1,-2){5}}
\put(12,0){\vector(-1,2){5}}
\put(11,-2){\vector(-1,0){12}}
\end{picture}
}
\newcommand{\bii}{
\begin{picture}(10,12)
\qbezier(5,10.5)(5.6,10.8)(6.2,11.1)
\qbezier(5,10.3)(5.5,9.8)(6,9.3)
\put(5,3){\circle{15}}
\put(4,10){\vector(-1,-2){5}}
\put(12,0){\vector(-1,2){5}}
\put(-1,-2){\vector(1,0){12}}
\end{picture}
}
\newcommand{\biii}{
\begin{picture}(10,12)
\qbezier(5,10.5)(5.6,10.8)(6.2,11.1)
\qbezier(5,10.3)(5.5,9.8)(6,9.3)
\put(5,3){\circle{15}}
\put(-2,0){\vector(1,2){5}}
\put(12,0){\vector(-1,2){5}}
\put(11,-2){\vector(-1,0){12}}
\end{picture}
}
\newcommand{\bv}{
\begin{picture}(10,12)
\qbezier(4,10.5)(4.6,10.8)(5.2,11.1)
\qbezier(4,10.3)(4.5,9.8)(5,9.3)
\put(5,3){\circle{15}}
\put(-2,0){\vector(1,2){5}}
\put(6,10){\vector(1,-2){5}}
\put(11,-2){\vector(-1,0){12}}
\end{picture}
}
\newcommand{\conn}{\operatorname{Conn}}
\newcommand{\irr}{\operatorname{Irr}}
\newcommand{\cirri}{\check{I}^{(\operatorname{Irr})}_{b, d}}
\newcommand{\cconni}{\check{I}^{(\operatorname{Conn})}_{b, d}}
\newcommand{\sub}{\operatorname{Sub}}
\newcommand{\ii}{{\rm{I}}}
\newcommand{\sss}{{\rm{SI\!I\!I}}}
\newcommand{\www}{{\rm{WI\!I\!I}}}
\newcommand{\s}{{\rm{SI\!I}}}
\newcommand{\w}{{\rm{WI\!I}}}
\begin{document}
\title[Arrow diagrams on spherical curves and computations]{Arrow diagrams on spherical curves and computations}
\author{Noboru Ito}
\address{Graduate School of Mathematical Sciences, The University of Tokyo, 3-8-1, Komaba, Meguro-ku, Tokyo 153-8914, Japan}
\email{noboru@ms.u-tokyo.ac.jp}
\author{Masashi Takamura}
\address{School of Social Informatics, Aoyama Gakuin University,
5-10-1 Fuchinobe Chuo-ku, Sagamihara-shi, Kanagawa-ken@252-5258, Japan}
\email{takamura@si.aoyama.ac.jp}
\keywords{spherical curve; chord diagram; Reidemeister move; dimensions of finite type invariants}
\date{December 20, 2018}
\maketitle

\begin{abstract}
We give a definition of an integer-valued function $\sum_i \alpha_i x^*_i$ derived from arrow diagrams for the ambient isotopy classes of oriented spherical curves.  Then, we introduce certain elements of the free $\mathbb{Z}$-module generated by the arrow diagrams with at most $l$ arrows, called relators of Type~($\check{\ii}$) (($\check{\s}$), ($\check{\w}$), ($\check{\sss}$), or ($\check{\www}$), resp.), and introduce another function $\sum_i \alpha_i \tilde{x}^*_i$ to obtain $\sum_i \alpha_i x^*_i$.  One of the main results shows that if $\sum_i \alpha_i \tilde{x}^*_i$ vanishes on finitely many relators of Type~($\check{\ii}$) (($\check{\s}$), ($\check{\w}$), ($\check{\sss}$), or ($\check{\www}$), resp.), then $\sum_i \alpha_i \tilde{x}$ is invariant under the deformation of type RI (strong~RI\!I, weak~RI\!I, strong~RI\!I\!I, or weak~RI\!I\!I, resp.).   The other main result is that we obtain functions of arrow diagrams with up to six arrows.  This computation is done with the aid of computers.  
\end{abstract}


\section{Introduction}\label{intro}
A spherical curve is the image of a generic immersion of a circle into a $2$-sphere.   Any two spherical curves are transformed into each other by a finite sequence of deformations of three types RI, RI\!I, and RI\!I\!I, each of which is a replacement of a part of the curve as Fig.~\ref{reide1}.   
\begin{figure}[h!]
\includegraphics[width=12cm]{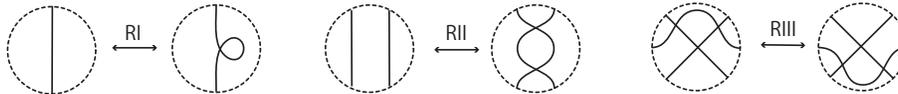}
\caption{Deformations of three types: RI, RI\!I, and RI\!I\!I.}\label{reide1}
\end{figure}
The deformations are obtained from Reidemeister moves of type $\Omega_1$, $\Omega_2$, and $\Omega_3$ on knot diagrams by ignoring over/under information of double points.   It is known that a finite number of repetitions of Reidemeister moves suffice to take any one diagram of a knot to any other.  
\begin{figure}[h!]
\includegraphics[width=12cm]{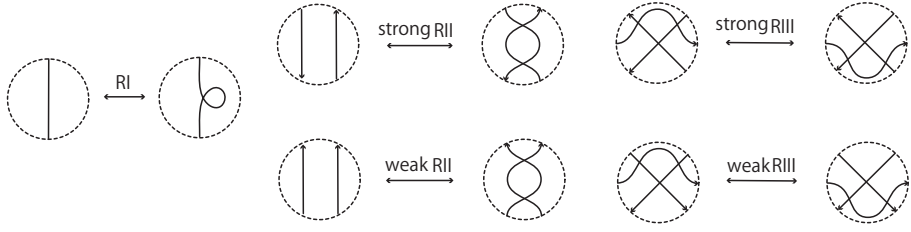}
\caption{Deformations of five types: RI (left), strong~RI\!I (upper center), weak~RII (lower center), strong~RI\!I\!I (upper right), weak~RI\!I\!I (lower right).}\label{refined_reide1}
\end{figure}
\begin{figure}[h!]
\includegraphics[width=5cm]{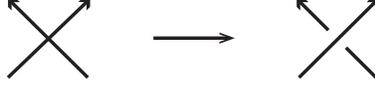}
\caption{The replacement of a double point to the positive crossing}
\label{positive}
\end{figure}

Deformations of $5$ types arise $32$ ($=2^5$) equivalence relations.  By \cite{IT32}, it is known that $32$ equivalence relations arise a non-trivial classification problem for each of $20$ equivalence relations that are mutually different.  
For these equivalence relations, some interesting problems are known \cite{ItoKP}.  
For example, we call an equivalence relation generated by deformations of types RI and weak~RI\!I\!I \emph{weak~(1, 3) homotopy}, and for it, Question~\ref{13weak} had remained unanswered by 2018 \footnote{Only for the unknot, the positive answer was given \cite{ITw13}.}.   
For a spherical curve $P$, we choose an orientation and replace every double point with the positive crossing, with respect to the orientation, as shown in Fig.~\ref{positive}.  

Here, every crossing of the type as in the rightmost figure of Fig.~\ref{positive} is called a \emph{positive crossing}.   Traditionally, if every crossing of a knot diagram is positive, the diagram is called \emph{positive knot diagram} and the knot having the diagram is called a \emph{positive knot}.   
By the replacement, there exists a map from $P$ to the (unoriented) knot diagram.  It is known that the map $f$ induces a map $[f]$ from a weak~(1, 3) homotopy class to the positive knot isotopy class.  
In 2013, Question~\ref{13weak} was given by S.~Kamada and independently Y.~Nakanishi \cite{ItoKP}.    
\begin{problem}\label{13weak}
Is the induced map $[f]$ injective?
\end{problem}
The above map $[f]$ also immediately implies that every knot invariant is a positive knot invariant, which is a weak (1, 3) homotopy invariant of spherical curves.   

Let $V_{\knot}^{(n)}$ ($V_{w13}^{(n)}$, resp.) be a vector space generated by signed arrow diagrams (arrow diagrams, resp.) with at most $n$ arrows, where each element of the vector space is a knot invariant (an invariant under weak (1, 3) homotopy, resp.).   
Bar-Natan, Halacheva, Leung, and Roukema \cite{bar-natan} compute $\dim V_{\knot}^{(n)} / V_{\knot}^{(n-1)}$ of spaces of finite type invariants obtained from the Polyak Algebra, which is generated by arrow diagrams and which has relations induced by Reidemeister moves $\Omega_1$, $\Omega_2$, and $\Omega_3$.  We obtain the computation of $\dim V_{w13}^{(n)} / V_{w13}^{(n-1)}$ which is compared to $\dim V_{\knot}^{(n)} / V_{\knot}^{(n-1)}$.

\begin{equation*}
       \begin{array}{|c|c|c|c|c|c|}
	 \hline
	  n & 2 & 3 & 4 & 5 & 6 \\
	 \hline
	  \dim V_{\knot}^{(n)} / V_{\knot}^{(n-1)} & 0 & 1 & 4 & 17 & \\
	 \hline	  	  
	  \dim V_{w13}^{(n)} / V_{w13}^{(n-1)} & 0 & 1 & 3 & 13 & 31   \\
	 \hline	  
       \end{array}
     \end{equation*}

\begin{conjecture}\label{conjecture1}
The pattern of $\dim V_{w13}^{(n)} / V_{w13}^{(n-1)} \le \dim V_{\knot}^{(n)} / V_{\knot}^{(n-1)}$ would continue.  
\end{conjecture}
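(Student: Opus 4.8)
\subsection*{A strategy toward Conjecture~\ref{conjecture1}}

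The plan is to promote the set-theoretic map $[f]$ to a filtered linear map between invariant spaces and to prove that it is onto. An unsigned arrow diagram is the same datum as a signed arrow diagram all of whose arrows carry the positive sign, so this inclusion induces a $\mathbb{Z}$-linear map
\[
\iota_n \colon D_{w13}^{(n)} \longrightarrow D_{\knot}^{(n)}
\]
from the space of unsigned arrow diagrams with at most $n$ arrows modulo the relators of Type~($\check{\ii}$) and Type~($\check{\www}$), to the analogous space of signed arrow diagrams with at most $n$ arrows modulo the Polyak relators coming from $\Omega_1$, $\Omega_2$, $\Omega_3$ (whose $\mathbb{Q}$-dual is $V_{\knot}^{(n)}$). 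The first step is to check that $\iota_n$ is well defined, i.e.\ that every relator of Type~($\check{\ii}$) or ($\check{\www}$), read in all-positive arrow diagrams, lies in the Polyak relation submodule; this is the arrow-diagram incarnation of the (already known) fact that $[f]$ is well defined, together with the observation that forgetting signs preserves the number of arrows, hence the filtration. Dualizing over $\mathbb{Q}$ then produces, for each $n$, a linear map $\iota_n^{*}\colon V_{\knot}^{(n)} \to V_{w13}^{(n)}$ that carries $V_{\knot}^{(n-1)}$ into $V_{w13}^{(n-1)}$. If every $\iota_n$ is injective, then every $\iota_n^{*}$ is surjective, and a diagram chase on the filtrations shows that the induced maps on graded quotients $V_{\knot}^{(n)}/V_{\knot}^{(n-1)} \to V_{w13}^{(n)}/V_{w13}^{(n-1)}$ are onto, which is exactly Conjecture~\ref{conjecture1}.

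So the task reduces to injectivity of $\iota_n$, i.e.\ to showing that any $\mathbb{Z}$-linear combination of all-positive arrow diagrams which is a consequence of the Polyak relations is already a consequence of the Type~($\check{\ii}$) and ($\check{\www}$) relations. The natural line of attack is to classify the Polyak relators that are \emph{supported} on all-positive arrow diagrams: the two new crossings created by an $\Omega_2$ move always have opposite signs, so no $\Omega_2$ relator is all-positive; an $\Omega_1$ relator reduces to Type~($\check{\ii}$); and one verifies that an all-positive $\Omega_3$ relator is precisely a relator of Type~($\check{\www}$) — this is where the distinction between strong and weak RI\!I\!I enters, since the sign pattern of the three crossings involved in an $\Omega_3$ move is what singles out the weak case. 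If the intersection of the Polyak relation submodule with the span of all-positive arrow diagrams were exactly the Type~($\check{\ii}$)--($\check{\www}$) submodule, injectivity of $\iota_n$, and hence the conjecture, would follow.

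The main obstacle is that the Polyak relation submodule is generated by relators each of which is a single $\Omega_i$-relator glued onto an arbitrary fixed arrow diagram, and a $\mathbb{Z}$-linear combination of such generators can be supported on all-positive arrow diagrams even when no individual generator is, because the diagrams with a non-positive arrow may cancel in pairs. Understanding, and ruling out or otherwise absorbing, such ``hidden'' all-positive relations is precisely the description of $\ker\iota_n$, and it is the crux of the problem; moreover the table already exhibits a strict inequality in degrees $4$ and $5$, so $\iota_n$ cannot be an isomorphism and the argument must yield genuinely one-sided estimates rather than a dimension count. A plausible route is to first settle Question~\ref{13weak} affirmatively, since an injective $[f]$ would give a conceptual reason for $\iota_n$ to be injective at the level of homotopy classes, and then to bootstrap degree by degree; the delicate point in the bootstrap is to show that a finite type weak~$(1,3)$ homotopy invariant of degree exactly $n$ extends to a knot invariant without raising its degree. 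At present that last step appears to require a new idea, which is why the statement is recorded as a conjecture.
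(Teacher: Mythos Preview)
This statement is labelled as a \emph{conjecture} in the paper, and the paper offers no proof; there is nothing on the paper's side to compare your proposal against. What you have written is a strategy rather than a proof, and you acknowledge as much in your final sentence.

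The framework is sound as far as it goes: the inclusion of unsigned arrow diagrams as all-positive signed ones, once shown to descend to the quotients, yields a filtered map $\iota_n$ whose dual carries $V_{\knot}^{(n)}$ to $V_{w13}^{(n)}$, and surjectivity there would give the desired inequality on graded pieces via the diagram chase you describe. Two points deserve more care. First, well-definedness of $\iota_n$ must be checked for \emph{abstract} arrow diagrams, not only realizable ones; the well-definedness of $[f]$ is a statement about actual spherical curves, and you still need to verify directly that each Type~($\check{\ii}$) and Type~($\check{\www}$) relator, read as an all-positive signed combination, lies in the Polyak relation submodule even when the ambient oriented Gauss word $STU$ is non-planar. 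Second, the connection you draw between Question~\ref{13weak} and injectivity of $\iota_n$ is only heuristic: injectivity of $[f]$ concerns homotopy classes of individual realizable curves, whereas injectivity of $\iota_n$ concerns formal $\mathbb{Z}$-linear combinations of arbitrary arrow diagrams in a quotient module. Even an affirmative answer to Question~\ref{13weak} would not obviously control the ``hidden'' all-positive relations you correctly isolate as the main obstacle, so the bootstrap you sketch is not an implication but at best an analogy.
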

  
The difficulty of Question~\ref{13weak} motivates this paper because there may be no arrow diagram formula that is invariant under weak (1, 3) homotopy and is not invariant under knot isotopy.    
In this paper, we introduce a systematic computation (Theorem~\ref{gg_thm2} of Section~\ref{rr2_main}) and an actual computation with an aid of computers (Section~\ref{takamura}) to obtain functions under weak (1, 3) homotopy.  

\section{Preliminaries and main results}\label{s_def}

\subsection{Definitions and notations}\label{ss_def}
In \cite{PV}, Polyak and Viro introduced a method for representing a knot diagram via what is called an oriented Gauss word.  
In this section, we first give a formal treatment of this concept (Definitions~\ref{ori_gg} and \ref{dfn2_arrow}).  Then we apply the idea of \cite{PV} to introduce integer-valued functions of spherical curves.    
\begin{definition}[Gauss word]\label{ori_g}
Let $\hat{n}$ $=$ $\{1, 2, 3, \dots, n\}$.  
A {\it{word}} $w$ of length $n$ is a map $\hat{n}$ $\to$ $\mathbb{N}$.  The word $w$ is represented by $w(1)w(2)w(3) \cdots w(n)$.  
For a word $w : \hat{n}$ $\to$ $\mathbb{N}$, each element of $w(\hat{n})$ is called a {\it{letter}}.  
A word $u$ of length $q$ ($q \le n$) is a \emph{sub-word} of $w$ if there exists an integer $p$ ($q \le p \le n$) such that $u(j)$ $=$ $w(n-p+j)$ ($1 \le j \le q$).  
A {\it{Gauss word}} of length $2n$ is a word $w$ of length $2n$ satisfying that each letter in $w(\hat{2n})$ appears exactly twice in $w(1)w(2)w(3) \cdots w(2n)$.
Let $\cyc$ and $\rev$ be maps $\hat{2n} \to \hat{2n}$ satisfying that $\cyc(p) \equiv p+1$ (mod $2n$) and $\rev(p) \equiv -p+1$ (mod $2n$).   
For two Gauss word $v$ and $w$ of length $2n$, $v$ and $w$ are {\it{isomorphic}} if there exists a bijection $f : v(\hat{2n})$ $\to$ $w(\hat{2n})$ satisfying the following: there exists $t \in \mathbb{Z}$ such that $w \circ (\cyc)^t \circ (\rev)^{\epsilon} = f \circ v$ ($\epsilon = 0$ or $1$).  The isomorphisms obtain an equivalence relation on the Gauss words.  For a Gauss word $v$ of length $2n$, denoted by $[v]$ the equivalence class containing $v$.  
\end{definition}
\begin{definition}[chord diagram]\label{def_chord}
An {\it{chord diagram}} is a configuration of $n$ pair(s) of points up to ambient isotopy and reflection of a circle.  This integer $n$ is called the {\it{length}} of the chord diagram.  
Traditionally, two points of each pair are connected by a simple arc.  This arc is called a \emph{chord}.  Let $G_{\le d}$ be the set of chord diagrams consisting of at most $d$ chords.  
\end{definition}
\begin{figure}[htbp]
\begin{center}
\includegraphics[width=8cm]{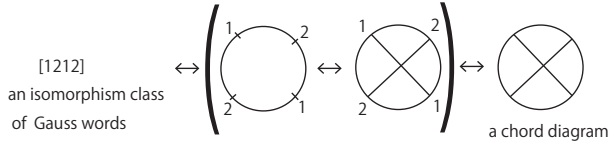}
\caption{Four expressions (Gauss words and chord diagrams).}
\label{four}
\end{center}
\end{figure}  
We note that the equivalence classes of the Gauss words of length $2n$ have one to one correspondence with a chord diagrams, each of which has $n$ chords.  We identify these four expressions (Fig.~\ref{four}) and freely use either one of them in Section~\ref{un}.    
\begin{definition}[oriented Gauss word]\label{ori_gg}
For a given Gauss word $v$ and for each letter $k$ of $v$, we distinguish the two $k$'s in $v$ by calling one $k$ a {\it{starting point}} and the other an {\it{end point}}.  We express the assignments by adding extra informations to $v$ $=$ $v(1)v(2) \cdots v(2n)$, that is, we add ``$\bar{~}$'' on the letters which are assigned end points.      
Then the new word $v^*$ is called an {\it{oriented Gauss word}}.   Each letter of an oriented Gauss word is called an {\it{oriented letter}}.  
Let $v^*$, $w^*$ be oriented Gauss words of length $2n$ induced from $v$, $w$.  
Without loss of generality, we may suppose that the set of the letters in $v(\hat{2n})$ is $\{ 1, 2, \dots, n \}$ (Clearly, $v^*$ is a word of length $2n$ with letters $\{ 1, 2, \ldots, n, \overline{1}, \overline{2}, \ldots, \overline{n} \}$).       
We say that $v^*$ is {\it{isomorphic}} to $w^*$ if there exists a bijection $f : v(\hat{2n})$ $\to$ $w(\hat{2n})$ such that there exists $t \in \mathbb{Z}$ 
such that $w^* \circ (\cyc)^{t} = f^* \circ v^*$ where $f^* :$ $v^*(\hat{2n})$ $=$ $\{ 1, 2, \ldots, n, \overline{1}, \overline{2}, \ldots, \overline{n} \}$ $\to$ $w^*(\hat{2n})$ is the bijection such that $f^*(i)$ $=$ $f(i)$ and $f^*(\overline{i})$ $=$ $\overline{f(i)}$ ($i=1, 2, \dots, n$).  
The isomorphisms give an equivalence relation on the oriented Gauss words.  
For an oriented Gauss word $v^*$ of length $2n$, $[[v^*]]$ denotes the equivalence class containing $v^*$.  An oriented Gauss word ${v^*}'$ is an {\emph{oriented sub-Gauss word}} of the oriented Gauss word $v^*$ if ${v^*}'$ is obtained from $v^*$ by ignoring some pairs of letters, each of which arises from a common number.  Then $\sub(v^*)$ denotes the set of the oriented sub-Gauss words of $v^*$.    
\end{definition}
\begin{definition}[arrow diagram]\label{dfn2_arrow}  
An {\it{arrow diagram}} is a configuration of $n$ pair(s) of points up to ambient isotopy on a circle where each pair of points consists of a starting point and an end point.  The integer $n$ is called the {\it{length}} of the arrow diagram.  
Traditionally, two points of each pair are connected by a simple arc and an assignment of starting and end points on the boundary points of the chord is represented by an arrow on the chord from the starting point to the end point.  This oriented arc is called an \emph{arrow}.      
\end{definition}
\begin{figure}[htbp]
\begin{center}
\includegraphics[width=8cm]{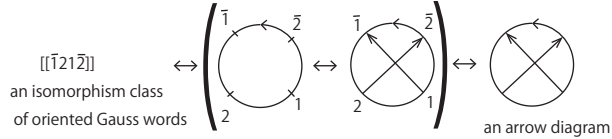}
\caption{Four expressions (oriented Gauss words and arrow diagrams).}
\label{four_arrow}
\end{center}
\end{figure}
Here, we note that, in general, $v^*$ and $v^* \circ \rev$ are not isomorphic, and this implies that an arrow diagram and its reflection image are not ambient isotopic in general.  
Thus, in this paper, we express this condition by assigning an arrow on the circle representing the counter clockwise direction (Fig.~\ref{four_arrow}).  
We note that the equivalence classes of the oriented Gauss words of length $2n$ have one to one correspondence with the arrow diagrams, each of which has $n$ arrows.  
In the rest of this paper, we identify these four expressions, and freely use either one of them depending on situations. 
\begin{notation}[$\check{G}_{\le d}$, $\check{n}_d$, $\check{G}_{b, d}$]\label{not4}
Let $\check{G}_{< \infty}$ be the set of the arrow diagrams, that is, the set of isomorphism classes of the oriented Gauss words.  It is clear that $\check{G}_{< \infty}$ consists of countably many elements.  Hence, there exists a bijection between $\check{G}_{< \infty}$ and $\{ {x}^*_i \}_{i \in {\mathbb{N}}}$, where ${x}^*_i$ is a variable.    
Take and fix a bijection $\check{f} : \check{G}_{< \infty}$ $\to$ $\{ {x}^*_i \}_{i \in {\mathbb{N}}}$ satisfying: the number of arrows of $\check{f}^{-1}({x}^*_i)$ is less than or equal to that of $\check{f}^{-1}({x}^*_j)$ if and only if $i \le j$ $(i, j \in {\mathbb{N}})$.  For each positive integer $d$, let $\check{G}_{\le d}$ be the set of arrow diagrams each consisting of at most $d$ arrows and let $\check{n}_d$ $=$ $|\check{G}_{\le d}|$.  Then, it is clear that $\check{f}|_{\check{G}_{\le d}}$ is a bijection from $\check{G}_{\le d}$ to $\{ {x}^*_1, {x}^*_2, \ldots, {x}^*_{\check{n}_d} \}$.  
Further, for each pair of integers $b$ and $d$ ($2 \le b \le d$), let $\check{G}_{b, d}$ $=$ $\check{G}_{\le d} \setminus \check{G}_{\le b-1}$.
Then, $\check{f}|_{\check{G}_{b, d}}$ is a bijection $\check{G}_{b, d}$ $\to$ $\{ {x}^*_{\check{n}_{b-1} +1}, {x}^*_{\check{n}_{b-1} + 2}, \dots, {x}^*_{\check{n}_d} \}$.
\end{notation}
In the rest of this paper, we use the notations in Notation~\ref{not4} unless otherwise denoted, and we freely use this identification between $\check{G}_{< \infty}$ and $\{{x}^*_i \}_{i \in \mathbb{N}}$. 
\begin{example}[$\check{G}_{2, 3}$]\label{eg_23}
It is elementary to confirm the following, and the details of the proof are left to the reader.  
\begin{align*}
\check{G}_{2, 3} = \{
&\bchordth,~
\bchordthb,~
\achordth,~
\ax,~
\bii~,~
\bi~,~
\biii~,~
\bv~,~
\gchordtwo,~
\achordtwo,~
\echordtwo,~
\cchordtwo,~
\hchordtwo,~
\bchordtwo,~\\
&\fchordtwo,~
\dchordtwo,~
\aii~,~
\aiii~,~
\ai~,~
\av~, ~
\ahb,~
\ahd,~
\ahc,~
\aha,~ 
\atra,~  
\atrb
\}.
\end{align*}
\end{example}
\begin{definition}[an arrow diagram $AD_P$ of a spherical curve $P$]\label{dfn_cdp}
Let $P^+$ be an oriented spherical curve, i.e. there is a generic immersion $g: S^1 \to S^2$ such that $g(S^1)=P^+$.   Then, $P^-$ denotes the oriented spherical curve with the opposite orientation.  
We define an arrow diagram of $P^+$ (e.g., Fig.~\ref{def1}) as follows: Let $k$ be the number of the double points of $P^+$, and $m_1, m_2, \ldots, m_k$ mutually distinct positive integers.   Let $K_{P^{\epsilon}}$ be the knot diagram obtained from $P^{\epsilon}$ by replacing every double point with the positive crossing (with respect to the orientation) as shown in Fig.~\ref{negative}.  \begin{figure}[h!]
\includegraphics[width=3cm]{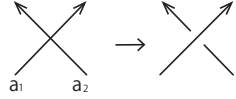}
\caption{Replacement of a double point with a crossing.}\label{negative}
\end{figure}
\begin{figure}[h!]
\includegraphics[width=10cm]{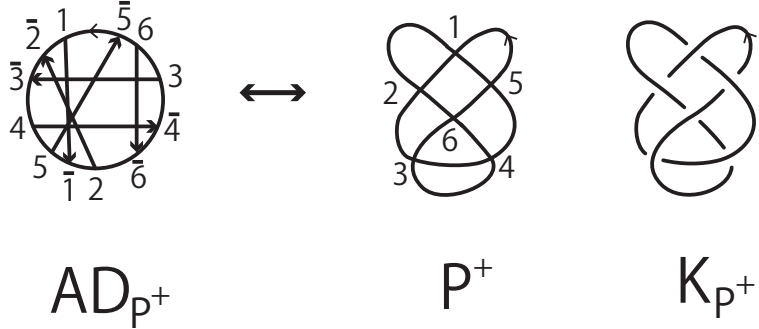}
\caption{An arrow diagram $AD_P$ of an oriented spherical curve $P^+$.}\label{def1}
\end{figure} 

Fix a base point, which is not a double point on $K_{P^{+}}$.  Starting from the base point, proceed along $P^+$ according to the orientation of $K_{P^{+}}$.   
Assign $m_1$ to the first double point that we encounter.  Then, assign $m_2$ to the next double point that we encounter provided it is not the first double point.  Suppose that we have already assigned $m_1$, $m_2, \dots, m_p$.  Then, assign $m_{p+1}$ to the next double point that we encounter if it has not been assigned yet.    
Following the same procedure, we finally label the double points of $P^+$.    
Note that $g^{-1}({\text{double point assigned $m_i$}})$ consists of two points on $S^1$ and we shall assign the pair $({m_i}, \bar{m}_i)$ to them by assigning starting point labeled by $m_i$ (end point labeled by $\bar{m}_i$, resp.) to the over path (under path, resp.).  The arrow diagram represented by $g^{-1}({\text{double point assigned $(m_1, \bar{m}_1)$}}),$ $g^{-1}({\text{double point assigned $(m_2, \bar{m}_2)$}}),$ $\dots,$ $g^{-1}({\text{double point assigned $(m_k, \bar{m}_k)$}})$ on $S^1$ is denoted by $AD_P$ and is called {\it{an arrow diagram of the spherical curve}} $P^+$.  
\end{definition}
\begin{remark}\label{remark2}
It is easy to show that the arrow diagram corresponding to $P^{-\epsilon}$ is ambient isotopic to a reflection of $AD_{P^\epsilon}$, where $(\epsilon, -\epsilon)$ $=$ $(+, -)$ or $(-, +)$.  
\end{remark}

Recall that $AD_{P^\epsilon}$ gives an equivalence class of oriented Gauss words, say $[[v_{P^\epsilon}]]$.  Then, by the definition of the equivalence relation, it is easy to see that the map $P^\epsilon \mapsto [[v_{P^\epsilon}]]$ is well-defined.
\begin{notation}[$x^{*}(AD)$]\label{ad_def}
Let $x^* \in \{ x^*_i \}_{i \in \mathbb{N}}$ (, hence, $x_i$ represents an arrow diagram).  For a given arrow diagram $AD$, fix an oriented Gauss word $G^*$ representing $AD$.  
Let $\sub_{x^*} (G^*)$ $=$ $\{H^*~|~H^* \in \sub(G^*)$, $[[H^*]]=x^* \}$.    
The cardinality of this subset is denoted by $x^*(G^*)$, i.e., $x^*(G^*)$ $=$ $|\sub_{x^*} (G^*)|$.  Let ${G'}^*$ be another oriented Gauss word representing $AD$.   By the definition of the isomorphism of the Gauss words, it is easy to see $x^*({G'}^*)$ $=$ $x^*(G^*)$.  Hence, we shall denote this number by $x^*(AD)$.  
If $AD$ is an arrow diagram of an oriented spherical curve $P^\epsilon$, then $x^*(AD)$ can be denoted by $x^*(P^\epsilon)$.
\end{notation}

Since each equivalence class of the oriented Gauss words is identified with an arrow diagram, we can calculate the number $x^*(AD)$ by using geometric observations.  We explain this philosophy in the next example.  
\begin{example}\label{example2}
We consider the arrow diagram $AD$ in Fig.~\ref{def5} (cf.~Fig.~\ref{def1}).  
Then we label the arrows of $AD$ by $\alpha_i$ $(1 \le i \le 6)$ as in the center figure of Fig.~\ref{def5}.   Consider the subset of the power set of $\{ \alpha_1, \alpha_2, \dots, \alpha_6 \}$, each element of which represents a chord diagram 
isomorphic to $\otimes$.  It is elementary to see that this subset consists of eleven  elements, those are, $\{\alpha_1, \alpha_2\}$, $\{\alpha_1, \alpha_3\}$, $\{\alpha_1, \alpha_4\}$, $\{\alpha_1, \alpha_5\}$$\{\alpha_2, \alpha_3\}$, $\{\alpha_2, \alpha_4\}$, $\{\alpha_2, \alpha_6\}$, $\{\alpha_3, \alpha_4\}$, $\{\alpha_3, \alpha_5\}$, $\{\alpha_4, \alpha_5\}$, and $\{\alpha_5, \alpha_6\}$, and that fact shows that $\ax^*(AD)=11$.  Similarly, we have values of $x^*(AD)$ for several $x^*$'s as in Fig.~\ref{def5}.     
\begin{figure}[h!]
\includegraphics[width=12cm]{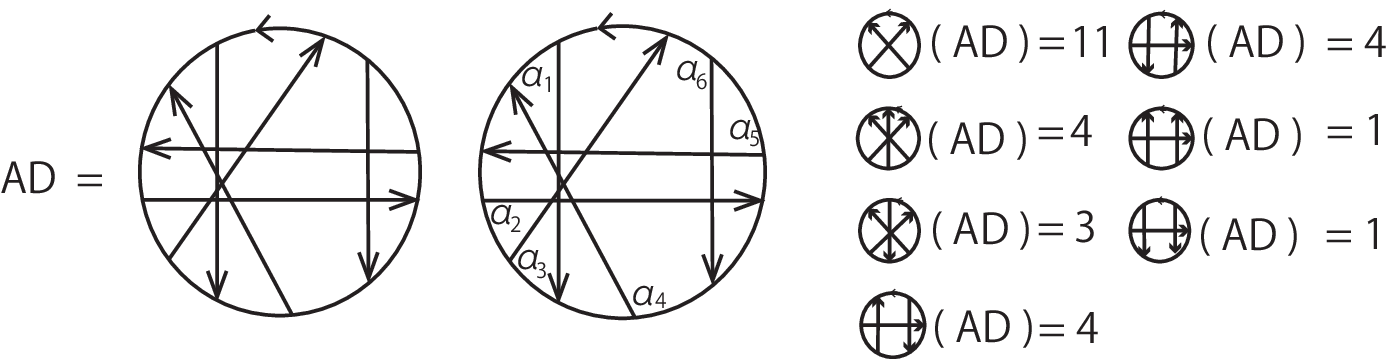}
\caption{$x^*(AD)$.}\label{def5}
\end{figure}
\end{example}
Let $AD$ be an arrow diagram consisting of arrows $\alpha_1, \alpha_2, \dots, \alpha_k$.  Then the arrow diagram consisting of a subset of $\{ \alpha_1, \alpha_2, \dots, \alpha_k \}$ is called a {\it{sub-arrow diagram}} of $AD$.  
\begin{example}\label{example3}
Consider the arrow diagram $AD_{P^+}$ in Fig.~\ref{def6}, which is obtained from $P$ in Fig.~\ref{def1} with an appropriate orientation.  Then we have the values of $x^*(P^{\epsilon})$ for several $x^*$'s as in Fig.~\ref{def6}.    
\begin{figure}[h!]
\includegraphics[width=10cm]{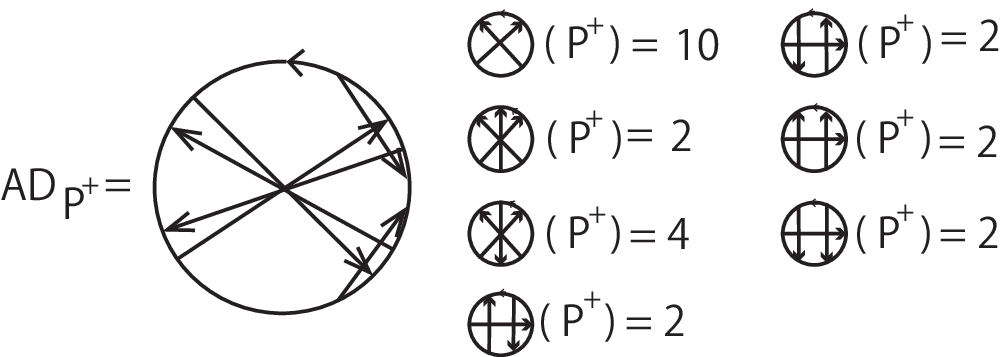}
\caption{$x^*(P^+)$.}\label{def6}
\end{figure}
\end{example}
\begin{definition}[$\tilde{x^*} (z)$, $\tilde{x^*} ({[[z]]})$]\label{tilde_ll}
Let $x^*$ be an arrow diagram.  
We define the function $\tilde{x^*}$ from the set of the oriented Gauss words to $\{ 0, 1 \}$ by
 \[
{\tilde{x^*}} (F^*) = \begin{cases}
1 \quad [[F^*]] = x^* \\
0 \quad [[F^*]] \neq x^*.
\end{cases}
\] 
By definition, it is easy to see $\tilde{x^*}(F^*_1)$ $=$ $\tilde{x^*}(F^*_2)$ for each pair $F^*_1$, $F^*_2$ with $[[F^*_1]]=[[F^*_2]]$.  Hence, we shall denote this number by $\tilde{x^*}([[F^*_1]])$.  If $[[F^*]]$ corresponds to an arrow diagram of an oriented spherical curve $P^{\epsilon}$, then $\tilde{x^*}([[F^*]])$ is denoted by $\tilde{x^*}(P^{\epsilon})$.  
Further, let $\mathbb{Z}[\check{G^*}_{\le l}]$ be the free $\mathbb{Z}$-module generated by the elements of $\check{G^*}_{\le l}$, where $l$ is sufficiently large.  
We linearly extend $\tilde{x^*}$ to the function from $\mathbb{Z}[\check{G^*}_{\le l}]$ to $\mathbb{Z}$.  
It is clear that for any oriented Gauss word $G^*$ with $[[G^*]] = AD$, 
\begin{equation}\label{tilde_x*}
x^*(AD) = \sum_{z^* \in \sub(G^*)} \tilde{x^*} (z^*).
\end{equation}
\end{definition}
\subsection{Relators and deformations}\label{sec_main_result_2}
Let $\mathbb{Z}[\check{G}_{\le l}]$ be the free $\mathbb{Z}$-module defined in Subsection~\ref{ss_def}.   
In this subsection, first we define the elements of $\mathbb{Z}[\check{G}_{\le l}]$ called {\it{relators}} of Type~($\check{\ii}$), Type~($\check{\s}$), Type~($\check{\w}$), Type~($\check{\sss}$), and Type~($\check{\www}$).  
\begin{definition}[Relators, cf.~Fig.~\ref{relator2}]\label{def_relators_arrow}
\begin{itemize}
\item Type ($\check{\ii}$). 
An element $r^*$ of $\mathbb{Z}[\check{G}_{\le l}]$ is called a \emph{Type $(\check{\ii})$ relator} if there exist an oriented Gauss word $S$ and a letter $i$ not in $S$ such that $r^*$ $=$ $[[Si\bar{i}]]$ or $[[S\bar{i}i]]$.  
\item Type ($\check{\s}$).  
An element $r^*$ of $\mathbb{Z}[\check{G}_{\le l}]$ is called a \emph{Type $(\check{\s})$ relator} if there exist an oriented Gauss word $ST$ and letters $i$ and $j$ not in $ST$ such that $r^*$ $=$ $[[Si\bar{j}Tj\bar{i}]]$ $+$ $[[SiT\bar{i}]]$ $+$ $[[S\bar{j}Tj]]$ or $[[S\bar{i}jT\bar{j}i]]$ $+$ $[[S\bar{i}Ti]]$ $+$ $[[SjT\bar{j}]]$.    
\item Type ($\check{\w}$).  
An element $r^*$ of $\mathbb{Z}[\check{G}_{\le l}]$ is called a \emph{Type $(\check{\w})$ relator} if there exist an oriented Gauss word $ST$ and letters $i$ and $j$ not in $ST$ such that $r^*$ $=$ $[[S\bar{i}jTi\bar{j}]]$ $+$ $[[S\bar{i}Ti]]$ $+$ $[[SjT\bar{j}]]$.    
\item Type ($\check{\sss}$).  
An element $r^*$ of $\mathbb{Z}[\check{G}_{\le l}]$ is called a \emph{Type $(\check{\sss})$ relator} if there exist an oriented Gauss word $STU$ and letters $i$, $j$ and $k$ not in $STU$ such that 
\begin{align*}
r^* &= \left( [[S\bar{i}jT\bar{k}iU\bar{j}k]] + [[S\bar{i}jTiU\bar{j}]] + [[S\bar{i}T\bar{k}iUk]] + [[SjT\bar{k}U\bar{j}k]] \right) \\
-&  \left([[Sj\bar{i}Ti\bar{k}Uk\bar{j}]] + [[Sj\bar{i}TiU\bar{j}]] + [[S\bar{i}Ti\bar{k}Uk]] + [[SjT\bar{k}Uk\bar{j}]] \right)
\end{align*}
or
\begin{align*}
& \left( [[Sk\bar{j}Ti\bar{k}Uj\bar{i}]] + [[S\bar{j}TiUj\bar{i}]] + [[SkTi\bar{k}U\bar{i}]] + [[Sk\bar{j}T\bar{k}Uj]] \right)\\
- & \left([[S\bar{j}kT\bar{k}iU\bar{i}j]] + [[S\bar{j}TiU\bar{i}j]] + [[SkT\bar{k}iU\bar{i}]] + [[S\bar{j}kT\bar{k}Uj]] \right). 
\end{align*} 
\item Type ($\check{\www}$).   
An element $r^*$ of $\mathbb{Z}[\check{G}_{\le l}]$ is called a \emph{Type $(\check{\www})$ relator} if there exist an oriented Gauss word $STU$ and letters $i$, $j$, and $k$ not in $STU$ such that 
\begin{align*}
r^*& =\left( [[S\bar{i}\,\bar{j}Ti\bar{k}Ujk]] + [[S\bar{i}\,\bar{j}TiUj]] + [[S\bar{i}Ti\bar{k}Uk]] + [[S\bar{j}T\bar{k}Ujk]] \right) \\
- & \left( [[S\bar{j}\, \bar{i}T\bar{k}iUkj]] + [[S\bar{j}\, \bar{i}TiUj]] + [[S\bar{i}T\bar{k}iUk]] + [[S\bar{j}T\bar{k}Ukj]]  \right).
\end{align*}
or
\begin{align*}
&\left( [[SkjT\bar{k}iU\bar{j}\bar{i}]] + [[SjTiU\bar{j}\bar{i}]] + [[SkT\bar{k}iU\bar{i}]] + [[SkjT\bar{k}U\bar{j}]] \right) \\
- & \left( [[SjkTi\bar{k}U\bar{i}\bar{j}]] + [[SjTiU\bar{i}\bar{j}]] + [[SkTi\bar{k}U\bar{i}]] + [[SjkT\bar{k}U\bar{j}]]  \right).
\end{align*} 
\end{itemize}
\end{definition}
\begin{figure}[h!]
\includegraphics[width=10cm]{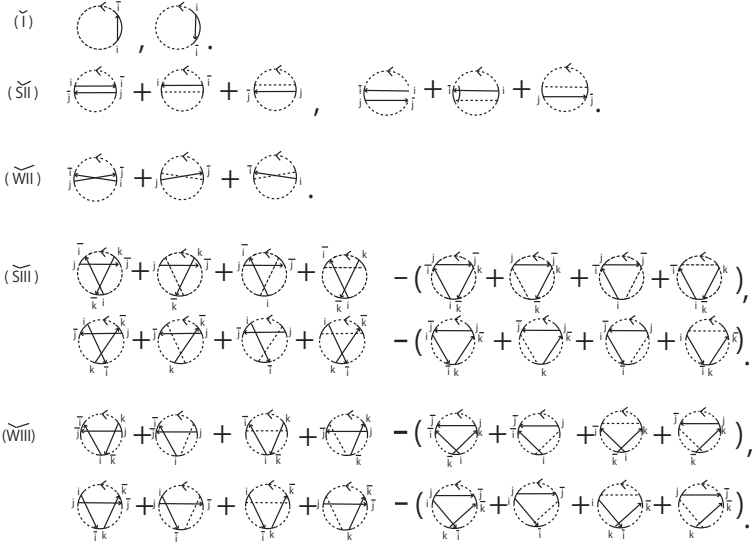}
\caption{Relators.}\label{relator2}
\end{figure}

We note that in Definition~\ref{def_relators_arrow}, if the oriented Gauss word $G^*$ is obtained from an oriented spherical curve $P^\epsilon$, then each relator corresponds to a deformation: Type ($\check{\ii}$) relator to RI, Type ($\check{\s}$) relator to strong~RI\!I, Type ($\check{\w}$) relator to weak~RI\!I, Type ($\check{\sss}$) relator to strong~RI\!I\!I, and Type ($\check{\www}$) relator to weak~RI\!I\!I.  For the precise statement of this note, we introduce the following setting.
\begin{figure}[h!]
\includegraphics[width=12cm]{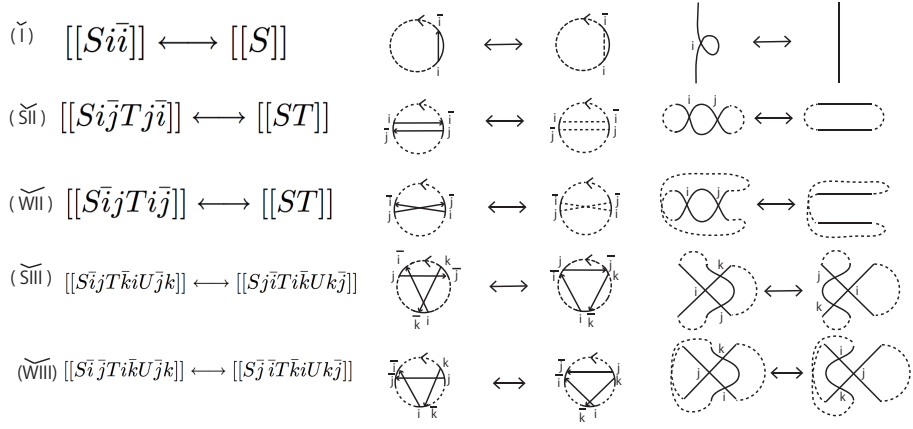}
\caption{Deformations (the mirror images of the center and the corresponding words are omitted).}\label{def3c}
\end{figure}

We first introduce the following notations.    
Let $P^{\epsilon}$ and ${P'}^{\epsilon}$ be two oriented spherical curves.  
If $P^{\epsilon}$ is related to ${P'}^{\epsilon}$ by a single RI (strong~RI\!I, weak~RI\!I, strong~RI\!I\!I, or weak~RI\!I\!I resp.), then there are oriented Gauss words $G^*$ and ${G'}^*$ such that (by exchanging $P$ and $P'$, if necessary), $G^*$ $=$ $Si\bar{i}$ ($S\bar{i}i$, $Si\bar{j}Tj\bar{i}$, $S\bar{i}jT\bar{j}i$, $S\bar{i}jTi\bar{j}$, $S\bar{i}jT\bar{k}iU\bar{j}k$, $Sk\bar{j}Ti\bar{k}Uj\bar{i}$, $S\bar{i}\,\bar{j}Ti\bar{k}Ujk$, or $SkjT\bar{k}iU\bar{j}\bar{i}$ resp.) and ${G'}^*=S$ ($S$, $ST$, $ST$, $ST$, $Sj\bar{i}Ti\bar{k}Uk\bar{j}$, $S\bar{j}kT\bar{k}iU\bar{i}j$, $S\bar{j}\,\bar{i}T\bar{k}iUkj$, or $SjkTi\bar{k}U\bar{i}\bar{j}$ resp.) such that $[[G^*]]=AD_{P^{\epsilon}}$ and $[[{G'}^*]]=AD_{{P'}^{\epsilon}}$ (Fig.~\ref{def3c}).  The subset of $\sub(G^*)$ such that each element has exactly $m$ pairs of letters, each of which arises from $i$, $j$, and $k$ is denoted by $\sub^{(m)} (G)$.    By definition, 
\begin{equation}\label{sub*}
\sub(G^*) = \sub^{(0)}(G^*) \amalg \sub^{(1)} (G^*) \amalg \sub^{(2)} (G^*) \amalg \sub^{(3)} (G^*).   
\end{equation}
Similarly, for an arrow diagram $x^*$, $\sub^{(m)}_{x^*} (G^*)$ denotes the subset of $\sub_{x^*}(G^*)$ consisting of elements, each of which has exactly $m$ pairs of letters, each of which arises from $i$, $j$, and $k$.  Then,  
\begin{equation}\label{subx*}
\sub_{x^*} (G^*) = \sub^{(0)}_{x^*} (G^*) \amalg \sub^{(1)}_{x^*} (G^*) \amalg \sub^{(2)}_{x^*} (G^*) \amalg \sub^{(3)}_{x^*} (G^*).  
\end{equation}

Let $\check{\mathcal{C}}$ be the set of the ambient isotopy classes of oriented spherical curves.  
Next, for each element $\sum_i \alpha_i x^*_i \in \mathbb{Z}[\check{G}_{\le l}]$, we define a function $\check{\mathcal{C}}$ $\to$ $\mathbb{Z}$, also denoted by $\sum_i \alpha_i x^*_i$, and another function denoted by $\sum_i \alpha_i \tilde{x}^*_i$.  
\begin{definition}[$\sum_i \alpha_i x^*_i$, $\sum_i \alpha_i \tilde{x}^*_i$]\label{def_xiv}
Let $b$ and $d$ ($2 \le b \le d$) be integers, $\check{G}_{\le d}$, $\check{G}_{b, d}$, $\{ {x}^*_i \}_{i \in \mathbb{N}}$.   Let $\mathbb{Z}[\check{G}_{\le l}]$ be as in Subsection~\ref{ss_def}.   
Recall that $\check{n}_d$ $=$ $|\check{G}_{\le d}|$ and $\check{G}_{b, d}$ $=$ $\{ {x}^*_i \}_{\check{n}_{b-1} +1 \le i \le \check{n}_d}$ and each $x^*_i$ represents the arrow diagram $\check{f}^{-1}(x^*_i)$ in Notation~\ref{not4}.  For each element
\[ \sum_{ \cineq } \alpha_i x^*_i \in \mathbb{Z}[\check{G}_{\le l}], \]
we define an integer-valued function $\mathcal{\check{C}}$ $\to$ $\mathbb{Z}$, also denoted by $\cFsum$, by
\[P^{\epsilon} \mapsto \cFsum (P^{\epsilon}),\]
where $x^*_i (P^{\epsilon})$ is the integer introduced in Notation~\ref{ad_def}.   

Analogously, for each $\cFsum \in \mathbb{Z}[\check{G}_{\le l}]$, we define the function
\[
\cNsum \cat : \check{G}_{\le l} \to \mathbb{Z}
\]
by
\[
\left( \cNsum \cat \right)(z^*) = \cNsum \cat(z^*),  
\]
where $\tilde{x}_i (z^*)$ is the integer introduced in Definition~\ref{tilde_ll}.
\end{definition}

By using this setting, we can describe the relation between a Type~($\check{\ii}$) relator and a single RI as follows.  
Suppose that an oriented spherical curve $P^{\epsilon}$ is related to another oriented spherical curve ${P'}^{\epsilon}$ by a single RI.  Recall that there exist an oriented letter $i$ and an oriented Gauss word $S$ such that $AD_{P^{\epsilon}}$ $=$ $[[Si\bar{i}]]$ or $[[S\bar{i}i]]$ and $AD_{{P'}^{\epsilon}}$ $=$ $[[S]]$.  Since the arguments are essentially the same, we may suppose, without loss of generality, $AD_{P^{\epsilon}}$ $=$ $[[Si\bar{i}]]$.   Then, let $G^*$ $=$ $Si\bar{i}$.  
Note that in this case in the decomposition (\ref{sub*}), $\sub^{(2)}(G^*)$ $=$ $\emptyset$ and $\sub^{(3)}(G^*)$ $=$ $\emptyset$.  
Then, by (\ref{tilde_x*}) in Definition~\ref{tilde_ll} and (\ref{sub*}), 
\begin{equation*}
\begin{split}
\cFsum &(P^{\epsilon}) = \scFsum \left( \sum_{z^* \in \sub(G^*)} \tilde{x}^*_i (z^*) \right) \\
\!\!\!&= \scFsum \left( \sum_{z^*_0 \in \sub^{(0)}(G^*)} \tilde{x}^*_i (z^*_0) + \sum_{z^*_1 \in \sub^{(1)}(G^*)} \tilde{x}^*_i (z^*_1) \right).    
\end{split}
\end{equation*}
Note that each element $z^*_0 \in$ $\sub^{(0)}(G^*)$ is an oriented sub-Gauss word of $S$.  Then it is clear that $\sub^{(1)}(G^*)$ $=$ $\{ z^*_0 i\bar{i}~|~z^*_0 \in \sub^{(0)}(G^*) \}$.  
Hence, 
\begin{align*}
&\cFsum(P^{\epsilon}) \\
&= \scFsum \left( \sum_{z^*_0 \in \sub^{(0)}(G^*)} \tilde{x}^*_i (z^*_0) + \sum_{z^*_0 \in \sub^{(0)}(G^*)} \tilde{x}^*_i (z^*_0 i\bar{i}) \right) \\
&= \scFsum \left( \sum_{z^*_0 \in \sub^{(0)}(G^*)} \tilde{x}^*_i ([[z^*_0]]) + \sum_{z^*_0 \in \sub^{(0)}(G^*)} \tilde{x}^*_i ([[z^*_0 i\bar{i}]]) \right).      
\end{align*}
On the other hand, since $\sub({G'}^*)$ is identified with $\sub^{(0)}(G^*)$,
\begin{align*}
\cFsum({P'}^{\epsilon}) &= \scFsum \left(\sum_{{z'}^* \in \sub ({G'}^*)} \tilde{x}^*_i ({z'}^*) \right) (\because (\ref{tilde_x*})) \\
&=\scFsum \left(\sum_{z^*_0 \in \sub^{(0)} (G^*)} \tilde{x}^*_i (z^*_0) \right)\\
&= \scFsum \left( \sum_{z^*_0 \in \sub^{(0)} (G^*)} \tilde{x}^*_i \big( [[z^*_0 ]] \big) \right).  
\end{align*}
As a conclusion, the difference of the values is calculated as follows.   
\begin{align*}
\cFsum(P^{\epsilon}) &-\cFsum({P'}^{\epsilon})\\ 
&= \cNsum \sum_{z^*_0 \in \sub^{(0)}(G^*)} \alpha_i \tilde{x}^*_i \left( [[z^*_0 i\bar{i}]] \right).
\end{align*}
We note that this is a linear combination of the values of Type~$\check{(\ii)}$ relators via $\tilde{x}^*_i$. 

For the case Type ($\check{\s}$), ($\check{\w}$), ($\check{\sss}$), or ($\check{\www}$) relators, the arguments are slightly more complicated than that of Type ($\check{\ii}$) relator.  We will explain them in the proof of Theorem~\ref{gg_thm2} and thus, we omit them here.       
\begin{definition}[$\check{R}_{\epsilon_1 \epsilon_2 \epsilon_3 \epsilon_4 \epsilon_5}$]\label{dfn_relator2}
For each $(\epsilon_1, \epsilon_2, \epsilon_3, \epsilon_4, \epsilon_5) \in \{0, 1\}^{5}$, let $\check{R}_{\epsilon_1 \epsilon_2 \epsilon_3 \epsilon_4 \epsilon_5}$ $=$ $\cup_{\epsilon_i = 1} \check{R}_i$ ($\subset \cup_{l \ge 1} \mathbb{Z}[\check{G}_{\le l}]$), where $\check{R}_1$ is the set of the Type ($\check{\ii}$) relators (corresponding to RI), $\check{R}_2$ is the set of the Type ($\check{\s}$) relators (corresponding to strong~RI\!I), $\check{R}_3$ is the set of the Type ($\check{\w}$) relators (corresponding to weak~RI\!I), $\check{R}_4$ is the set of the Type ($\check{\sss}$) relators (corresponding to strong~RI\!I\!I), and $\check{R}_5$ is the set of the Type ($\check{\www}$) relators (corresponding to weak~RI\!I\!I).  
\end{definition}
For integers $b$ and $d$ ($2 \le b \le d$), let $\check{O}_{b, d}$ be the projection $\mathbb{Z}[\check{G}_{\le l}]$ $\to$ $\mathbb{Z}[\check{G}_{b, d}]$.  Here, note that $\check{O}_{b, d}$ is a linear map.  By the definition, we immediately have:
\begin{lemma}\label{lem_relator*}
If $\cineq$, then for any $r^* \in \mathbb{Z}[\check{G}_{\le l}]$, 
\[
\tilde{x}^*_i (r^*) = \tilde{x}^*_i (\check{O}_{b, d} (r^*)).
\]
\end{lemma}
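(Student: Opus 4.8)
The plan is to reduce the statement to the elementary observation that $\tilde{x}^*_i$ simply reads off the coefficient of the generator $x^*_i$ in an element of $\mathbb{Z}[\check{G}_{\le l}]$, and that the projection $\check{O}_{b,d}$ does not disturb that coefficient precisely because $\check{n}_{b-1}+1 \le i \le \check{n}_d$. So the proof is a short unwinding of Definitions~\ref{tilde_ll} and of $\check{O}_{b,d}$; there is no substantive obstacle.

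First I would record the key computation on generators. By Notation~\ref{not4} the generators of $\mathbb{Z}[\check{G}_{\le l}]$ are the pairwise distinct arrow diagrams $x^*_1, x^*_2, \dots, x^*_{\check{n}_l}$, i.e. the distinct isomorphism classes of oriented Gauss words of length at most $l$. Fixing for each index $j$ an oriented Gauss word $G^*_j$ with $[[G^*_j]] = x^*_j$, Definition~\ref{tilde_ll} gives $\tilde{x}^*_i(x^*_j) = \tilde{x}^*_i([[G^*_j]])$, which equals $1$ when $[[G^*_j]] = x^*_i$ and $0$ otherwise; since the $x^*_j$ are pairwise distinct arrow diagrams this is exactly the Kronecker symbol $\delta_{ij}$. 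Hence, writing an arbitrary $r^* \in \mathbb{Z}[\check{G}_{\le l}]$ as a finite sum $r^* = \sum_j c_j x^*_j$ and invoking the linear extension of $\tilde{x}^*_i$ from Definition~\ref{tilde_ll}, one gets $\tilde{x}^*_i(r^*) = c_i$.

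Next I would analyze $\check{O}_{b,d}$. By definition it is the linear projection $\mathbb{Z}[\check{G}_{\le l}] \to \mathbb{Z}[\check{G}_{b,d}]$, and by Notation~\ref{not4} we have $\check{G}_{b,d} = \{ x^*_{\check{n}_{b-1}+1}, \dots, x^*_{\check{n}_d}\}$, i.e. $\check{G}_{b,d}$ consists of exactly those generators whose index lies in $[\check{n}_{b-1}+1, \check{n}_d]$. Therefore $\check{O}_{b,d}(r^*) = \sum_{\check{n}_{b-1}+1 \le j \le \check{n}_d} c_j x^*_j$, with the very same coefficients $c_j$. Applying the computation of the previous paragraph to this element (legitimate since $\check{G}_{b,d} \subset \check{G}_{\le l}$) together with the hypothesis $\check{n}_{b-1}+1 \le i \le \check{n}_d$ yields $\tilde{x}^*_i(\check{O}_{b,d}(r^*)) = c_i = \tilde{x}^*_i(r^*)$, which is the asserted identity.

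The only point that deserves a moment of care is the bookkeeping in the first step, namely that $\tilde{x}^*_i$ is genuinely well defined on arrow diagrams — equivalently, that it descends from oriented Gauss words to isomorphism classes, which is exactly what Definition~\ref{tilde_ll} records — so that ``the coefficient of $x^*_i$'' is an honest linear functional on $\mathbb{Z}[\check{G}_{\le l}]$. Once that is granted, the lemma is an immediate consequence of the definitions of $\tilde{x}^*_i$ and of the coordinate projection $\check{O}_{b,d}$, as the text anticipates.
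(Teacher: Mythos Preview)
Your proof is correct and is precisely the unwinding of definitions that the paper has in mind: the text introduces the lemma with ``By the definition, we immediately have'' and gives no further argument, so your coefficient-extraction reading of $\tilde{x}^*_i$ together with the observation that $\check{O}_{b,d}$ fixes the $i$-th coordinate for $\check{n}_{b-1}+1 \le i \le \check{n}_d$ is exactly what was intended.
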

\begin{notation}\label{orep2}
Let $\cRep(b, d)$ $=$ $\check{O}_{b, d}(\cRep)$.  
\end{notation}

By using Lemma~\ref{lem_relator*}, we have the next proposition.   
\begin{proposition}\label{relator_prop*}
For each pair of integers $b$ and $d$ $(2 \le b \le d)$, let $\cNsum \cat$ be a function as in Definition~\ref{def_xiv}.  
For $(\epsilon_1, \epsilon_2, \epsilon_3, \epsilon_4, \epsilon_5)$ $\in \{ 0, 1 \}^5$, let $\cRep$ be the set as in Definition~\ref{dfn_relator2}.
The following two statements are equivalent: 
\begin{enumerate}
\item $\cNsum \alpha_i \tilde{x}^*_i (r^*) = 0 \quad (\forall r^* \in \cRep)$.\label{s1*}
\item $\cNsum \alpha_i \tilde{x}^*_i (r^*) = 0 \quad (\forall r^* \in \cRep(b, d))$.\label{s2*}
\end{enumerate}
\end{proposition}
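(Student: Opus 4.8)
The plan is to deduce both implications from a single identity that upgrades Lemma~\ref{lem_relator*} from the individual functions $\tilde{x}^*_i$ to the linear combination $\cNsum \cat$. First I would record that, for every $r^* \in \mathbb{Z}[\check{G}_{\le l}]$,
\[
\cNsum \alpha_i \tilde{x}^*_i (r^*) \;=\; \cNsum \alpha_i \tilde{x}^*_i \bigl( \check{O}_{b, d}(r^*) \bigr) .
\]
This is obtained by summing the equalities of Lemma~\ref{lem_relator*}, which already hold for arbitrary elements of $\mathbb{Z}[\check{G}_{\le l}]$, over $i$ in the range $\cineq$, each weighted by $\alpha_i$. (The two sides are read via the linear extensions of $\cNsum \cat$ to $\mathbb{Z}[\check{G}_{\le l}]$ and to $\mathbb{Z}[\check{G}_{b, d}]$ from Definition~\ref{def_xiv}; since $\mathbb{Z}[\check{G}_{b, d}] \subseteq \mathbb{Z}[\check{G}_{\le l}]$ and the two extensions agree on the smaller module, there is no ambiguity.)

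With this identity in hand the equivalence is purely formal. For $(\ref{s1*}) \Rightarrow (\ref{s2*})$: given $s^* \in \cRep(b, d)$, Notation~\ref{orep2} says $\cRep(b, d) = \check{O}_{b, d}(\cRep)$, so $s^* = \check{O}_{b, d}(r^*)$ for some $r^* \in \cRep$, and the identity together with $(\ref{s1*})$ yields $\cNsum \alpha_i \tilde{x}^*_i(s^*) = \cNsum \alpha_i \tilde{x}^*_i(r^*) = 0$. Conversely, for $(\ref{s2*}) \Rightarrow (\ref{s1*})$: given $r^* \in \cRep$, we have $\check{O}_{b, d}(r^*) \in \cRep(b, d)$ by the definition of that set, so $(\ref{s2*})$ gives $\cNsum \alpha_i \tilde{x}^*_i(\check{O}_{b, d}(r^*)) = 0$, and the identity then forces $\cNsum \alpha_i \tilde{x}^*_i(r^*) = 0$, as required.

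I do not anticipate a genuine obstacle here: once Lemma~\ref{lem_relator*} is available the argument is bookkeeping, and the two implications are essentially dual to one another. The only point deserving care is the step in the first paragraph --- that the pieces of a relator supported on arrow diagrams in $\check{G}_{\le b-1}$ or with more than $d$ arrows, which $\check{O}_{b, d}$ discards, contribute nothing to any $\tilde{x}^*_i$ with $\cineq$ --- and this is exactly what Lemma~\ref{lem_relator*} records. It is also worth noting, although it is not part of the statement, that $\cRep(b, d)$ lies in the finitely generated free $\mathbb{Z}$-module $\mathbb{Z}[\check{G}_{b, d}]$, so condition $(\ref{s2*})$ amounts to a finite system of linear conditions on the coefficients $\alpha_i$; this is what makes the proposition effective for the later computations carried out with the aid of computers.
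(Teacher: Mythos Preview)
Your proof is correct and follows the same route as the paper: both hinge on the identity coming from Lemma~\ref{lem_relator*}, summed over $i$ with weights $\alpha_i$, and then read off the two implications. The paper's proof is actually terser than yours---it declares $(\ref{s1*}) \Rightarrow (\ref{s2*})$ ``obvious'' without explicitly invoking the identity, and for $(\ref{s2*}) \Rightarrow (\ref{s1*})$ it writes out precisely the three-line argument you give---so if anything your version is more careful in justifying the forward direction.
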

\begin{proof}
\begin{itemize}
\item $(\ref{s1*}) \Rightarrow (\ref{s2*})$.  This is obvious.  
\item $(\ref{s1*}) \Leftarrow (\ref{s2*})$. Let $r^* \in \cRep$.  By Lemma~\ref{lem_relator*},  
\[
\cNsum \cat (r^*) = \cNsum \cat (\check{O}_{b, d}(r^*)).  
\]
By the condition (\ref{s2*}), 
\[
\cNsum \cat (\check{O}_{b, d}(r^*)) = 0.  
\]
Then, 
\[
\cNsum \cat (r^*) = 0.  
\]
\end{itemize}
\end{proof}

Recall that $\rev$ is the map $\hat{2n}$ $\to$ $\hat{2n}$ such that $\rev(p)$ $\equiv$ $-p+1$ (mod~$2n$).  
Also recall that we have fixed one to one correspondence between the set of isomorphism classes of the oriented Gauss words and $\{ {x}^*_i \}_{i \in \mathbb{N}}$.  
\begin{definition}
We say that the pair $(\cxi, \cxj)$ is a \emph{mirroring} pair if there exist oriented Gauss words $v^*$, $w^*$ such that $[[v^*]]=\cxi$, $[[w^*]]=\cxj$, and $v^*$ $=$ $w^* \circ \rev$.         
We say that the mirroring pair $(\cxi, \cxj)$ is \emph{reflective} if $\cxi(P^\epsilon)$ $=$ $\cxj(P^\epsilon)$ for any oriented spherical curve $P^\epsilon$.        
\end{definition}
\begin{example}
One may think that $(\cxi, \cxj)$ is reflective if and only if $\cxi$ and $\cxj$ would be equivalent.  However, it is false.  For example, a counterexample is given by Fact~\ref{fact1}, which is a special case of \cite[Page~451]{PV}.   It is easier to show that $\ahc(P)$ $=$ $\ahd(P)$ for every spherical curve $P$ by the  argument similar to \cite[Lemma~1]{Based_Ito}.   
\end{example}
\begin{fact}[Polyak-Viro]\label{fact1}
For every spherical curve $P$, $\aha(P)$ $=$ $\ahb(P)$.  
\end{fact}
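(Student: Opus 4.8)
The plan is to peel off a purely formal part of the statement using the reflection symmetry of arrow diagrams, and then to pinpoint the single place where realizability of $P$ (that it lies on $S^{2}$) is indispensable.

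First I would restate the claim combinatorially. By Notation~\ref{ad_def}, $\aha(P^{\epsilon})$ is the number of three-element sets $\{x,y,z\}$ of double points of $P$ whose preimages on $S^{1}$, decorated with the over/under data coming from the positive-crossing convention of Fig.~\ref{negative}, form the oriented Gauss word of $\aha$; similarly for $\ahb(P^{\epsilon})$ and $\ahb$. Now $\aha$ and $\ahb$ share the \emph{same} underlying chord diagram --- the chord of $x$ crosses the chords of both $y$ and $z$, while the chords of $y$ and $z$ are nested and disjoint --- and $\ahb$ is obtained from $\aha$ by reversing the two nested arrows, which is exactly the reflection of the whole arrow diagram. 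Hence, by Remark~\ref{remark2}, $\aha(P^{\epsilon})=\ahb(P^{-\epsilon})$ for every spherical curve $P$. Thus $\aha(P)=\ahb(P)$ is equivalent to the assertion that $\aha$ does not depend on the orientation of the spherical curve, and it is here, and only here, that realizability must be used: for an arbitrary (non-realizable) arrow diagram $D$ one has $\aha(D)\neq\ahb(D)$ already when $D$ equals $\aha$, so the equality genuinely relies on $AD_{P}$ being the Gauss diagram of a curve on $S^{2}$.

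Next I would make the geometry explicit by smoothing at $x$. Resolving $P$ along its orientation at the double point corresponding to $x$ produces two oriented closed curves $\gamma_{1},\gamma_{2}$ on $S^{2}$; the double points $y$ and $z$ become two transverse intersection points of $\gamma_{1}$ with $\gamma_{2}$, and the triple $\{x,y,z\}$ is counted by one of $\aha$, $\ahb$, $\ahc$, $\ahd$ exactly when $y$ and $z$ occur in opposite cyclic orders along $\gamma_{1}$ and along $\gamma_{2}$ (the interlacement imposed by the chord diagram). Since every crossing of $K_{P}$ is positive, which of the four arrow diagrams occurs is dictated by the pair of local intersection signs $(\varepsilon_{y},\varepsilon_{z})\in\{\pm1\}^{2}$ of $\gamma_{1},\gamma_{2}$ at $y,z$ (for a fixed sign convention): $\aha$ for $(+,-)$, $\ahb$ for $(-,+)$, $\ahc$ for $(-,-)$, $\ahd$ for $(+,+)$. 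Grouping the triples by the smoothing point $x$ and summing $\frac{1}{2}(\varepsilon_{y}-\varepsilon_{z})$ over the interlacing pairs, a short bookkeeping computation rewrites $\aha(P)-\ahb(P)$ as a signed sum, over all double points $x$ and all intersection points $p$ of the two curves obtained by smoothing at $x$, of $\varepsilon_{p}$ times the difference between the position of $p$ along $\gamma_{2}$ and along $\gamma_{1}$ (counted from $x$); likewise $\ahc(P)-\ahd(P)$ becomes the analogous sum with $\varepsilon_{y}+\varepsilon_{z}$ replacing $\varepsilon_{y}-\varepsilon_{z}$.

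The heart of the proof, and the step I expect to be the main obstacle, is to show that this sum vanishes for curves on $S^{2}$. The crude input $\sum_{p}\varepsilon_{p}=0$ for each $x$ --- the homological intersection number of $\gamma_{1}$ and $\gamma_{2}$ in $H_{1}(S^{2})=0$ --- does not suffice; one needs the sharper relation between the signs $\varepsilon_{p}$ and the cyclic orders along $\gamma_{1},\gamma_{2}$ that holds on a genus-$0$ surface, which I would extract from the realizability relations of Polyak and Viro, \cite[Page~451]{PV}. Equivalently, one can construct directly a sign-reversing bijection carrying the $\ahb$-triples to the $\aha$-triples along the lines of \cite[Lemma~1]{Based_Ito}; for $\ahc$ and $\ahd$ the two signs attached to each interlacing pair are equal, so the corresponding pairing is transparent and the argument of \cite[Lemma~1]{Based_Ito} goes through essentially verbatim --- which is why $\ahc(P)=\ahd(P)$ is the easier statement. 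By contrast, everything preceding this step --- the reduction to counting triples, the oriented-smoothing translation, and the reflection identity $\aha(P^{\epsilon})=\ahb(P^{-\epsilon})$ --- is routine.
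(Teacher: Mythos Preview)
The paper does not prove Fact~1; it simply records it as a result of Polyak--Viro, citing \cite[Page~451]{PV}, and remarks in the preceding example that the companion identity $\ahc(P)=\ahd(P)$ is the easier one, following the argument of \cite[Lemma~1]{Based_Ito}. There is therefore no in-paper proof to compare against.

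Your outline correctly identifies the structure of the problem. The reduction via Remark~\ref{remark2} is valid: $(\aha,\ahb)$ is indeed a mirroring pair, so the claim is equivalent to orientation-independence of $\aha$, and you are right that this is where realizability must enter, since for a non-realizable arrow diagram the equality can fail. The oriented-smoothing translation at the crossing corresponding to the distinguished chord is also the natural geometric move. (One minor slip: in the underlying chord diagram of $\aha$ the two non-crossing chords are side-by-side, not nested; this does not affect the argument.)

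Where the proposal falls short is exactly where you say it does. You isolate the crux --- the vanishing of a signed sum that encodes realizability on $S^{2}$ --- and then defer it, first to ``the realizability relations of Polyak and Viro, \cite[Page~451]{PV}'', and alternatively to a bijection ``along the lines of \cite[Lemma~1]{Based_Ito}''. The first is circular: Fact~1 is stated in the paper precisely as a special case of that page of \cite{PV}. The second is not obviously available: the paper itself distinguishes the two cases, saying the $\ahc=\ahd$ argument is the easier one handled by \cite[Lemma~1]{Based_Ito}, without claiming that argument extends to $\aha=\ahb$. So what you have is an accurate road map with the one substantive step left as a black box. To turn it into a proof you would need to actually construct the sign-reversing involution (or write down and verify the relevant realizability identity) rather than point to where it lives.
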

\begin{remark}
By Remark~\ref{remark2}, we see that for any mirroring pair $(\cxi, \cxj)$ and oriented spherical curve $P^{\epsilon}$, we have $x^{*}_i (P^{\epsilon})$ $=$ $x^{*}_j (P^{- \epsilon})$.  
\end{remark}
\subsection{Main result and corollaries}\label{rr2_main}
\begin{theorem}\label{gg_thm2}
Let $b$ and $d$ $(2 \le b \le d)$ be integers and $\check{G}_{\le d}$, $\{ x^*_i \}_{i \in \mathbb{N}}$, $\mathbb{Z}[\check{G}_{\le l}]$, $\check{n}_d$ $=$ $|\check{G}_{\le d}|$, and $\check{G}_{b, d}$ $=$ $\{ x^*_i \}_{\check{n}_{b-1} +1 \le i \le \check{n}_{d}}$ be as in Subsection~\ref{ss_def}.     
Let $\cFsum$ and $\caFsum$ be functions as in Definition~\ref{def_xiv}.  
For $(\epsilon_1, \epsilon_2, \epsilon_3, \epsilon_4, \epsilon_5)$ $\in \{ 0, 1 \}^5$, let $\cRep(b, d)$ be as in Subsection~\ref{sec_main_result_2}.  
Suppose the following conditions are satisfied: 

\noindent $\bullet$ If $\epsilon_1 =1$, $\displaystyle \caFsum(r^*)=0$ for each $r^* \in \check{R}_{10000}(b, d)$.

\noindent $\bullet$ If $\epsilon_2 =1$, $\displaystyle \caFsum(r^*)=0$ for each $r^* \in \check{R}_{01000}(b, d)$.

\noindent $\bullet$ If $\epsilon_3 =1$, $\displaystyle \caFsum(r^*)=0$ for each $r^* \in \check{R}_{00100}(b, d)$.

\noindent $\bullet$ If $\epsilon_4 =1$, $\displaystyle \caFsum(r^*)=0$ for each $r^* \in \check{R}_{00010}(b, d)$.

\noindent $\bullet$ If $\epsilon_5 =1$, $\displaystyle \caFsum(r^*)=0$ for each $r^* \in \check{R}_{00001}(b, d)$.

Then, $\displaystyle \cFsum$
is an integer-valued invariant of oriented spherical curves under the deformations  corresponding to $\epsilon_j =1$.   

If, in addition to the above, for each mirroring pair $(\cxi, \cxj)$, one of the following two conditions is satisfied: $(1)$ $\alpha_i = \alpha_j$  or $(2)$ $(\cxi, \cxj)$ is reflective, then $\cFsum$ is an integer-valued invariant of spherical curves under the deformations  corresponding to $\epsilon_j =1$.   
\end{theorem}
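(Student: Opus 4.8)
\emph{Proof strategy.} The backbone is identity~$(\ref{tilde_x*})$, which, for any oriented Gauss word $G^{*}$ representing $AD_{P^{\epsilon}}$, expresses $\cFsum(P^{\epsilon})$ as $\sum_{i}\alpha_{i}\sum_{z^{*}\in\sub(G^{*})}\tilde{x}^{*}_{i}(z^{*})$, together with the stratification~$(\ref{sub*})$ of $\sub(G^{*})$ by the number $m\in\{0,1,2,3\}$ of chords of $z^{*}$ lying over the disc where the move takes place. The type~RI case is already carried out in Subsection~\ref{sec_main_result_2}; the plan is to run the four remaining cases---strong~RI\!I, weak~RI\!I, strong~RI\!I\!I, weak~RI\!I\!I---in exactly the same way, using the oriented Gauss words $G^{*}$ and ${G'}^{*}$ recorded there and depicted in Fig.~\ref{def3c}, and treating only one of the two ``or'' variants of each relator in Definition~\ref{def_relators_arrow}, the others following by symmetry.

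For each move one first cancels the low strata. In the type~RI\!I case (take $G^{*}=Si\bar{j}Tj\bar{i}$ and ${G'}^{*}=ST$) the identity on oriented sub-Gauss words identifies $\sub^{(0)}(G^{*})$ with $\sub({G'}^{*})$, so these summands cancel in $\cFsum(P^{\epsilon})-\cFsum({P'}^{\epsilon})$, leaving a sum over $\sub^{(1)}(G^{*})\amalg\sub^{(2)}(G^{*})$. The decisive point is the matching: for each pair of oriented sub-Gauss words $S'\subseteq S$, $T'\subseteq T$, the three terms $[[S'i\bar{j}T'j\bar{i}]]+[[S'iT'\bar{i}]]+[[S'\bar{j}T'j]]$ of the associated Type~$(\check{\s})$ relator are exactly the elements of $\sub^{(2)}(G^{*})$, respectively $\sub^{(1)}(G^{*})$, that restrict to $(S',T')$, each occurring once; so summing these relators over all $(S',T')$ and applying $\caFsum$ reproduces $\cFsum(P^{\epsilon})-\cFsum({P'}^{\epsilon})$ verbatim. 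In the type~RI\!I\!I cases both $G^{*}$ and ${G'}^{*}$ contain all of $i,j,k$, so one must additionally check that $\sub^{(0)}$ and $\sub^{(1)}$ of $G^{*}$ cancel against the corresponding strata of ${G'}^{*}$, leaving only $\sub^{(2)}\amalg\sub^{(3)}$; the eight-term Type~$(\check{\sss})$ (resp.\ $(\check{\www})$) relator localised at a triple $(S',T',U')$ should then enumerate these survivors with multiplicity one and the signs prescribed by Definition~\ref{def_relators_arrow}. In every case the hypotheses, together with Proposition~\ref{relator_prop*} (vanishing on $\cRep(b,d)$ is equivalent to vanishing on $\cRep$), force the difference to be $0$; since any two spherical curves related by the allowed moves differ by a finite sequence of them, $\cFsum$ is an invariant of oriented spherical curves.

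For the second assertion I would establish that the value is independent of the chosen orientation. Let $\sigma$ be the involution of the index set sending $i$ to the index of the reflection of $x^{*}_{i}$; since reflection preserves the number of arrows, $\sigma$ permutes $\{\check{n}_{b-1}+1,\dots,\check{n}_{d}\}$, and whenever $i\neq\sigma(i)$ the pair $(\cxi,\cxj)$ with $\cxj=x^{*}_{\sigma(i)}$ is a mirroring pair. By Remark~\ref{remark2}, $AD_{P^{-\epsilon}}$ is a reflection of $AD_{P^{\epsilon}}$, so $x^{*}_{i}(P^{-\epsilon})=x^{*}_{\sigma(i)}(P^{\epsilon})$ and, after reindexing, $\cFsum(P^{-\epsilon})-\cFsum(P^{\epsilon})=\sum_{i}(\alpha_{\sigma(i)}-\alpha_{i})\,x^{*}_{i}(P^{\epsilon})$. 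Grouping the index set into $\sigma$-orbits, a two-element orbit $\{i,\sigma(i)\}$ contributes $(\alpha_{\sigma(i)}-\alpha_{i})\bigl(x^{*}_{i}(P^{\epsilon})-x^{*}_{\sigma(i)}(P^{\epsilon})\bigr)$, which vanishes because for that mirroring pair the hypothesis gives either $\alpha_{i}=\alpha_{\sigma(i)}$ or reflectivity (hence $x^{*}_{i}(P^{\epsilon})=x^{*}_{\sigma(i)}(P^{\epsilon})$), while fixed points contribute $0$. Thus $\cFsum(P^{+})=\cFsum(P^{-})$, and together with the first assertion this shows $\cFsum$ descends to a well-defined invariant of unoriented spherical curves.

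I expect the main obstacle to be the RI\!I\!I bookkeeping: verifying term by term, against the explicit and somewhat unwieldy relators of Definition~\ref{def_relators_arrow}, that the four summands built from sub-words of $G^{*}$ and the four built from sub-words of ${G'}^{*}$ enumerate $\sub^{(2)}(G^{*})\amalg\sub^{(3)}(G^{*})\amalg\sub^{(2)}({G'}^{*})\amalg\sub^{(3)}({G'}^{*})$ with no repetition and the correct signs, and that the $m\le 1$ strata of $G^{*}$ and ${G'}^{*}$ cancel pairwise. The picture in Fig.~\ref{def3c} makes the correspondence geometrically plausible, but the check is a finite, intricate case analysis, and the mirror-image variant of each move adds a further routine round of cases.
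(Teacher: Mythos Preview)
Your proposal is correct and follows essentially the same approach as the paper's proof: stratify $\sub(G^{*})$ by the number of chords lying in the move disc via~(\ref{sub*}), cancel the low strata against $\sub({G'}^{*})$ (for RI\!I) or against the matching strata of ${G'}^{*}$ (for RI\!I\!I), organize the surviving terms over each $(S',T')$ or $(S',T',U')$ into exactly one relator from Definition~\ref{def_relators_arrow}, and invoke Proposition~\ref{relator_prop*} to pass from vanishing on $\cRep(b,d)$ to vanishing on $\cRep$. Your orientation-independence argument via the reflection involution $\sigma$ and the orbit decomposition is a slightly more structured presentation of the same pairing the paper uses, and the bookkeeping you flag as the main obstacle is precisely what the paper carries out by writing down the explicit maps $z^{*}_{3},z^{*}_{2a},z^{*}_{2b},z^{*}_{2c}$ and their primed analogues.
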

\begin{definition}[irreducible arrow diagram]
Let $x^*$ be an arrow diagram.  An arrow $\alpha$ in $x^*$ is said to be an {\it isolated} arrow if $\alpha$ does not intersect any other arrow.  
An arrow diagram $x^*$ is said to be \emph{irreducible} if $x^*$ has no isolated arrow.           
The set of the irreducible arrow diagrams is denoted by $\check{\irr}$.  Let $\cirri$ $=$ $\{ i ~|~ \cineq, x^*_i \in \check{\irr} \}$. 
\end{definition}
\begin{example}\label{eg_23irr}
Under the notation of Example~\ref{eg_23}, we note that the set of irreducible elements of $\check{G}_{2, 3}$ consists of seven elements, those are, $\ax$, $\ahb$, $\ahd$, $\ahc$, $\aha$, $\atra$, and $\atrb$.     
\end{example}
Let $\check{O}_{\irr}$ be the projection $\mathbb{Z}[\check{G}_{\le l}]$ $\to$ $\mathbb{Z}[\check{G}_{\le l} \cap \check{\irr}]$.
\begin{lemma}\label{lemma3}
Let $x^*$ $\in$ $\check{\irr}$ and let $r^*$ $\in$ $\check{R}_{\epsilon_1 \epsilon_2 \epsilon_3 \epsilon_4 \epsilon_5}(b, d)$.  
\[
\tilde{x}^* (r^*) = \tilde{x}^* (\check{O}_{\irr}(r^*)).  
\]
\end{lemma}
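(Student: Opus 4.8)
The plan is to prove a slightly more general fact than stated, namely that the identity $\tilde{x}^* (r^*) = \tilde{x}^* (\check{O}_{\irr}(r^*))$ holds for \emph{every} element $r^* \in \mathbb{Z}[\check{G}_{\le l}]$, not only for relators. Concretely, I would show that the two $\mathbb{Z}$-linear functionals $\tilde{x}^*$ and $\tilde{x}^* \circ \check{O}_{\irr}$ on $\mathbb{Z}[\check{G}_{\le l}]$ coincide whenever $x^* \in \check{\irr}$. Since both maps are linear, it is enough to check this on the free generators, i.e.\ on each arrow diagram $[[w]] \in \check{G}_{\le l}$.

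First I would recall from Definition~\ref{tilde_ll} that on a generator $\tilde{x}^*([[w]])$ equals $1$ if $[[w]] = x^*$ and $0$ otherwise, and from the definition of the projection that $\check{O}_{\irr}([[w]]) = [[w]]$ if $[[w]] \in \check{\irr}$ while $\check{O}_{\irr}([[w]]) = 0$ if $[[w]] \notin \check{\irr}$; equivalently, $\ker \check{O}_{\irr}$ is the sub-$\mathbb{Z}$-module generated by the reducible arrow diagrams in $\check{G}_{\le l}$. Then I split into two cases. If $[[w]] \in \check{\irr}$, both sides reduce to $\tilde{x}^*([[w]])$ at once. If $[[w]] \notin \check{\irr}$, then $\tilde{x}^*(\check{O}_{\irr}([[w]])) = \tilde{x}^*(\oo) = 0$; on the other hand, since $x^*$ is irreducible and $[[w]]$ is not, we have $[[w]] \neq x^*$, so $\tilde{x}^*([[w]]) = 0$ as well. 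Hence the two functionals agree on every generator, therefore on all of $\mathbb{Z}[\check{G}_{\le l}]$, and in particular on each $r^* \in \check{R}_{\epsilon_1 \epsilon_2 \epsilon_3 \epsilon_4 \epsilon_5}(b, d)$, which gives the lemma.

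I do not expect any genuine obstacle: the statement is a formal consequence of the definitions, the only essential point being that irreducibility of $x^*$ forces $\tilde{x}^*$ to annihilate every reducible generator, so that $\tilde{x}^*$ factors through $\check{O}_{\irr}$. The one place to be a little careful is the well-definedness already incorporated into Definition~\ref{tilde_ll}, i.e.\ that $\tilde{x}^*$ depends only on the isomorphism class $[[w]]$ and not on the chosen representative oriented Gauss word $w$; but this has been established, so nothing further is required, and no hypothesis on $r^*$ beyond membership in $\mathbb{Z}[\check{G}_{\le l}]$ is actually used.
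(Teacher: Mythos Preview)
Your proposal is correct and matches the paper's (implicit) approach: the paper states Lemma~\ref{lemma3} without proof, treating it as immediate from the definitions in the same way as the analogous Lemma~\ref{lem_relator*} for $\check{O}_{b,d}$, and your argument supplies exactly the routine verification one expects. Your observation that the hypothesis $r^* \in \check{R}_{\epsilon_1 \epsilon_2 \epsilon_3 \epsilon_4 \epsilon_5}(b,d)$ is not actually used is apt and parallels how Lemma~\ref{lem_relator*} is already stated for arbitrary $r^* \in \mathbb{Z}[\check{G}_{\le l}]$.
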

If we consider the function of the form $\cFsumIrr$ for $\cFsum$ in Theorem~\ref{gg_thm2}, we obtain:  
\begin{corollary}\label{g_thm2}
Let $b$ and $d$ $(2 \le b \le d)$ be integers and $\check{G}_{\le d}$, $\{ x^*_i \}_{i \in \mathbb{N}}$, $\mathbb{Z}[\check{G}_{\le l}]$, $\check{n}_d$ $=$ $|\check{G}_{\le d}|$, and $\check{G}_{b, d}$ $=$ $\{ x^*_i \}_{\check{n}_{b-1} +1 \le i \le \check{n}_{d}}$ be as in Subsection~\ref{ss_def}.     
Let $\cFsumIrr$ and $\caFsumIrr$ be functions as in Definition~\ref{def_xiv}.  
For $(\epsilon_1, \epsilon_2, \epsilon_3, \epsilon_4, \epsilon_5)$ $\in \{ 0, 1 \}^5$, let $\cRep(b, d)$ be as in Subsection~\ref{sec_main_result_2}.      
Suppose the following conditions are satisfied: 

\noindent $\bullet$ If $\epsilon_2 =1$, $\displaystyle \caFsumIrr(r^*)=0$ for each $r^* \in \check{R}_{01000}(b, d)$. 

\vs 

\noindent $\bullet$ If $\epsilon_3 =1$, $\displaystyle \caFsumIrr(r^*)=0$ for each $r^* \in \check{R}_{00100}(b, d)$.

\vs

\noindent $\bullet$ If $\epsilon_4 =1$, $\displaystyle \caFsumIrr(r^*)=0$ for each $r^* \in \check{R}_{00010}(b, d)$.  

\vs

\noindent $\bullet$ If $\epsilon_5 =1$, $\displaystyle \caFsumIrr(r^*)=0$ for each $r^* \in \check{R}_{00001}(b, d)$.   

Then, 
$\displaystyle \cFsumIrr$
is an integer-valued invariant of oriented spherical curves under RI and the deformations  corresponding to $\epsilon_j =1$.    

If, in addition to the above, for each mirroring pair $(\cxi, \cxj)$, one of the following two conditions is satisfied: $(1)$ $\alpha_i = \alpha_j$  or $(2)$ $(\cxi, \cxj)$ is reflective, then $\cFsumIrr$ is an integer-valued invariant of spherical curves under RI and the deformations  corresponding to $\epsilon_j =1$.   
\end{corollary}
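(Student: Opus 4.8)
The plan is to obtain Corollary~\ref{g_thm2} as a special case of Theorem~\ref{gg_thm2}: regard $\cFsumIrr$ as the function $\cFsum$ in which $\alpha_i = 0$ for every $i$ with $\check{n}_{b-1}+1 \le i \le \check{n}_d$ and $\cxi \notin \check{\irr}$, and fix $\epsilon_1 = 1$ throughout. The point is that, for a function whose support lies in $\cirri$, the RI-hypothesis of Theorem~\ref{gg_thm2} costs nothing. Indeed, each Type~($\check{\ii}$) relator is a single arrow diagram $[[Si\bar{i}]]$ or $[[S\bar{i}i]]$ in which the arrow $i$ joins two cyclically adjacent points and hence crosses no other arrow; thus it is non-irreducible, and so is its image under $\check{O}_{b,d}$ (which is either that same diagram or $0$). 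Consequently $\check{O}_{\irr}(r^*) = 0$ for every $r^* \in \check{R}_{10000}(b,d)$, and Lemma~\ref{lemma3} gives $\tilde{x}^*_i(r^*) = \tilde{x}^*_i(\check{O}_{\irr}(r^*)) = 0$ for every $i \in \cirri$; hence $\caFsumIrr(r^*) = 0$ on all of $\check{R}_{10000}(b,d)$, which is exactly the first bullet of Theorem~\ref{gg_thm2} with $\epsilon_1 = 1$.

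With this zero-padding one has $\cFsum = \cFsumIrr$ and $\caFsum = \caFsumIrr$ as functions, and the four displayed hypotheses of Corollary~\ref{g_thm2} become verbatim the bullets of Theorem~\ref{gg_thm2} for $j = 2,3,4,5$. Combining this with the automatic case $\epsilon_1 = 1$ just established, Theorem~\ref{gg_thm2} applies and yields that $\cFsumIrr$ is an integer-valued invariant of oriented spherical curves under RI and under the deformations with $\epsilon_j = 1$, which is the first assertion.

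For the second (unoriented) assertion I would use that the $\rev$-reflection of the base circle does not change which pairs of chords cross; hence the mirror of an irreducible arrow diagram is again irreducible, and for a mirroring pair $(\cxi, \cxj)$ one has $i \in \cirri$ if and only if $j \in \cirri$. Therefore, if $i \notin \cirri$ then $\alpha_i = \alpha_j = 0$, so condition~$(1)$ in the last sentence of Theorem~\ref{gg_thm2} holds for that pair automatically; and if $i \in \cirri$, the hypothesis assumed in the corollary is precisely condition~$(1)$ or $(2)$ required there. Invoking that last sentence of Theorem~\ref{gg_thm2} then gives invariance of $\cFsumIrr$ under RI and the deformations with $\epsilon_j = 1$ as a function on (unoriented) spherical curves, completing the proof.

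I do not expect a genuine obstacle. The only two points needing a line of justification are structural and immediate from the definitions: that a chord joining cyclically adjacent endpoints is isolated (so Type~($\check{\ii}$) relators are killed by $\check{O}_{\irr}$), and that irreducibility is preserved by the $\rev$-reflection (so the mirroring-pair bookkeeping in the last step is consistent).
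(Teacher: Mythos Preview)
Your proposal is correct and follows essentially the same route as the paper: reduce to Theorem~\ref{gg_thm2} by observing that every Type~$(\check{\ii})$ relator contains an isolated chord, so $\tilde{x}^*_i$ vanishes on it for irreducible $x^*_i$, making the $\epsilon_1=1$ hypothesis automatic. The paper argues this directly (irreducible $x^*_i$ has no isolated chord, hence $\tilde{x}^*_i(r^*)=0$), while you route it through Lemma~\ref{lemma3} and $\check{O}_{\irr}$; the content is the same. Your extra paragraph checking that irreducibility is preserved under $\rev$ so that the zero-padded coefficients still satisfy condition~(1) on all mirroring pairs is a detail the paper leaves implicit, and it is a fair point to spell out.
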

\noindent{\bf{Proof of Corollary~\ref{g_thm2} from Theorem~\ref{gg_thm2}.}}
By Theorem~\ref{gg_thm2}, it is enough to show
\begin{equation}\label{eq2_color}
\caFsumIrr (r^*)=0  \quad (\forall r^* \in \check{R}_{10000} (b, d))
\end{equation}
for a proof of Corollary~\ref{g_thm2}.  We first note that if $x^*_i \in \check{\irr}$, then $x^*_i$ has no isolated chords.  On the other hand, let $r^* \in \check{R}_{10000}(b, d)$, that is, there exist an oriented Gauss word $S$ and a letter $j$ such that $r^* =[[Sj\bar{j}]]$ or $[[S\bar{j}j]]$.  Then, the chord corresponding to $j$ is isolated.  
These show that $\tilde{x}^*_i (r^*)=0$.  This shows that (\ref{eq2_color}) holds.    
This fact together with Theorem~\ref{gg_thm2} immediately gives Corollary~\ref{g_thm2}.    
$\hfill \Box$

\begin{definition}[connected arrow diagram]
Let $v^*$ be an oriented Gauss word of length $2m$ and $w^*$ an oriented Gauss word of length $2n$ where $v^*(\hat{2m}) \cap w^*(\hat{2n})$ $=$ $\emptyset$.  
Then we define the oriented Gauss word of length $2(m+n)$, denoted by $v^*w^*$, by $v^*w^*(i)=v^*(i)$ ($1 \le i \le 2m$) and $v^*w^*(2m+i)=w^*(i)$ ($1 \le i \le 2n$).  The arrow diagram $[[v^*w^*]]$ is called a {\it{product}} of arrow diagrams $[[v^*]]$ and $[[w^*]]$.  If an arrow diagram is not a product of two non-empty arrow diagrams, then the arrow diagram is called a {\it{connected}} arrow diagram.  The set of the connected arrow diagrams is denoted by $\check{\conn}$.  Let $\cconni$ $=$ $\{ i ~|~ \cineq, x^*_i \in \check{\conn} \}$.
\end{definition}
It is easy to see that if a connected arrow diagram has at least two oriented chords, then it is irreducible.  Note that the arrow diagram consisting of exactly one chord is connected but not irreducible. 

Recall that $\check{\mathcal{C}}$ is the set of the ambient isotopy classes of the oriented spherical curves. 
\begin{definition}[connected sum]\label{additivity_def_color}
Let $P^{\epsilon_1}_1$ and $P^{\epsilon_2}_2$ $\in\check{\mathcal{C}}$.  We suppose that $P^{\epsilon_1}_1$ and $P^{\epsilon_2}_2$ are obtained from $P_1$ and $P_2$, respectively ($(\epsilon_1, \epsilon_2)$ $\in$ $\{ +, - \}^2$) and suppose that the ambient $2$-spheres are oriented.  
Let $p_i$ be a point on $P^{\epsilon_i}_i$ such that $p_i$ is not a double point ($i=1, 2$).  Let $d_i$ be a sufficiently small disk with center $p_i$ ($i=1, 2$) such that $d_i \cap P^{\epsilon_i}_i$ consists of an arc properly embedded in $d_i$.  Let $\hat{d}_i$ $=$ $cl(S^2 \setminus d_i)$ and ${\hat{P}}^{\epsilon_i}_i$ $=$ $P^{\epsilon_i}_i \cap \hat{d}_i$; let $h :$ $\partial \hat{d}_1$ $\to$ $\partial \hat{d}_2$ be an orientation reversing homeomorphism such that $h(\partial \hat{P}_1)$ $=$ $\partial \hat{P}_2$ such that $h($ the starting point of $\hat{P}^{\epsilon_1}_1 )$ $=$ the end point of $\hat{P}^{\epsilon_2}_2$ and $h($ the end point of $\hat{P}^{\epsilon_1}_1 )$ $=$ the starting point of $\hat{P}^{\epsilon_1}_1$.  Then, $\hat{P}^{\epsilon_1}_1 \cup_h \hat{P}^{\epsilon_2}_2$ gives an oriented spherical curve with the orientation induced from $P^{\epsilon_1}$ and $P^{\epsilon_2}$ in the oriented $2$-sphere $\hat{d}_1 \cup_h \hat{d}_2$.      
The spherical curve $\hat{P}^{\epsilon_1}_1 \cup_h \hat{P}^{\epsilon_2}_2$ in the oriented $2$-sphere, denoted by $P^{\epsilon_1}_1 \sharp_{(p_1, p_2)} P^{\epsilon_2}_2$, is called a \emph{connected sum} of the spherical curves $P^{\epsilon_1}_1$ and $P^{\epsilon_2}_2$ at the pair of points $p_1$ and $p_2$.  
\end{definition}
\begin{definition}[additivity]
Let $I$ be a function on $\check{\mathcal{C}}$.  We say that $I$ is \emph{additive} if $I(P^{\epsilon_1}_1 \sharp_{(p_1, p_2)} P^{\epsilon_2}_2)$ $=$ $I(P^{\epsilon_1}_1)$ $+$ $I(P^{\epsilon_2}_2)$ for any $P^{\epsilon_1}_1 \sharp_{(p_1, p_2)} P^{\epsilon_2}_2$ in Definition~\ref{additivity_def_color}.    
\end{definition}
If we consider the function of the form $\cFsumConn$ for $\cFsum$ in Theorem~\ref{gg_thm2}, we obtain:    
\begin{corollary}\label{cor3b}
Let $b$ and $d$ $(2 \le b \le d)$ be integers and $\check{G}_{\le d}$, $\{ x^*_i \}_{i \in \mathbb{N}}$, $\mathbb{Z}[\check{G}_{\le l}]$, $\check{n}_d$ $=$ $|\check{G}_{\le d}|$, and $\check{G}_{b, d}$ $=$ $\{ x^*_i \}_{\check{n}_{b-1} +1 \le i \le \check{n}_{d}}$ be as in Subsection~\ref{ss_def}.     
Let $\cFsumConn$ and $\caFsumConn$ be functions as in Definition~\ref{def_xiv}.  
For $(\epsilon_1, \epsilon_2, \epsilon_3, \epsilon_4, \epsilon_5)$ $\in \{ 0, 1 \}^5$, let $\Rep(b, d)$ be as in Subsection~\ref{sec_main_result_2}.  
Suppose the following conditions are satisfied: 

\noindent $\bullet$ If $\epsilon_2 =1$, $\displaystyle \caFsumConn(r^*)=0$ for each $r^* \in \check{R}_{01000}(b, d)$.

\noindent $\bullet$ If $\epsilon_3 =1$, $\displaystyle \caFsumConn(r^*)=0$ for each $r^* \in \check{R}_{00100}(b, d)$.

\noindent $\bullet$ If $\epsilon_4 =1$, $\displaystyle \caFsumConn(r^*)=0$ for each $r^* \in \check{R}_{00010}(b, d)$.

\noindent $\bullet$ If $\epsilon_5 =1$, $\displaystyle \caFsumConn(r^*)=0$ for each $r^* \in \check{R}_{00001}(b, d)$.   

Then, 
$\displaystyle \cFsumConn$
is an integer-valued additive invariant of oriented spherical curves under RI and the deformations corresponding to $\epsilon_j = 1$.

If, in addition to the above, for each mirroring pair $(\cxi, \cxj)$, one of the following two conditions is satisfied: $(1)$ $\alpha_i = \alpha_j$  or $(2)$ $(\cxi, \cxj)$ is reflective, then $\cFsumConn$ is an integer-valued invariant of spherical curves under RI and the deformations corresponding to $\epsilon_j =1$.
   
\end{corollary}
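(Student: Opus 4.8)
The plan is to obtain the invariance assertions of Corollary~\ref{cor3b} as a special case of Corollary~\ref{g_thm2}, and then to prove additivity by a counting argument on sub-arrow diagrams. First I would note that $\cconni \subseteq \cirri$: since $b \ge 2$ we have $\check{n}_{b-1} \ge \check{n}_1$, so every $x^*_i$ with $\check{n}_{b-1}+1 \le i \le \check{n}_d$ has at least $b \ge 2$ chords, and by the remark preceding Definition~\ref{additivity_def_color} a connected arrow diagram with at least two chords is irreducible; hence $x^*_i \in \check{\irr}$ for every $i \in \cconni$. Consequently $\cFsumConn$ and $\caFsumConn$ are the cases of $\cFsumIrr$ and $\caFsumIrr$ in which $\alpha_i = 0$ for $i \in \cirri \setminus \cconni$, and the hypotheses imposed on $\check{R}_{01000}(b,d)$, $\check{R}_{00100}(b,d)$, $\check{R}_{00010}(b,d)$, $\check{R}_{00001}(b,d)$ (and the mirroring hypothesis) in Corollary~\ref{cor3b} are exactly those of Corollary~\ref{g_thm2} for these coefficients. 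So Corollary~\ref{g_thm2} yields at once that $\cFsumConn$ is an integer-valued invariant under RI and the deformations corresponding to $\epsilon_j = 1$, and, under the mirroring hypothesis, an invariant of spherical curves.

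For additivity, the key step I would carry out is to identify the arrow diagram of a connected sum with a product of arrow diagrams: for a suitable base point, $AD_{P^{\epsilon_1}_1 \sharp_{(p_1, p_2)} P^{\epsilon_2}_2} = AD_{P^{\epsilon_1}_1} \cdot AD_{P^{\epsilon_2}_2}$. This should be visible from Definition~\ref{additivity_def_color}: after excising the small arcs near $p_1$ and $p_2$ and regluing by $h$, a traversal of the resulting curve begun at the point where the $p_1$-arc was removed runs through all of $\hat{P}^{\epsilon_1}_1$ and then, via $h$, through all of $\hat{P}^{\epsilon_2}_2$ before closing up; since the orientation is the induced one and the local picture at every double point (hence the positive-crossing replacement and the starting/end-point assignment) is unchanged, the oriented Gauss word read off is the concatenation $v^*_1 v^*_2$ with $[[v^*_i]] = AD_{P^{\epsilon_i}_i}$, which represents $AD_{P^{\epsilon_1}_1} \cdot AD_{P^{\epsilon_2}_2}$.

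Then I would prove that for any connected arrow diagram $x^*$ with at least two chords and any arrow diagrams $AD_1, AD_2$,
\[
x^*(AD_1 \cdot AD_2) = x^*(AD_1) + x^*(AD_2).
\]
Fix oriented Gauss words with $AD_1 \cdot AD_2 = [[v^*_1 v^*_2]]$, $[[v^*_i]] = AD_i$. A sub-arrow diagram of $AD_1 \cdot AD_2$ corresponds to a choice of a set of chords; if they all lie in $v^*_1$ it is a sub-arrow diagram of $AD_1$, if they all lie in $v^*_2$ a sub-arrow diagram of $AD_2$, and otherwise its oriented Gauss word is the concatenation of a non-empty restriction of $v^*_1$ and a non-empty restriction of $v^*_2$, so its class is a product of two non-empty arrow diagrams, hence not connected and in particular not equal to $x^*$. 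Thus $\sub_{x^*}(v^*_1 v^*_2)$ is partitioned into the part coming from $v^*_1$ and the part coming from $v^*_2$, giving the displayed identity. Applying this with $AD_i = AD_{P^{\epsilon_i}_i}$ and $x^* = x^*_i$ for every $i \in \cconni$ (each of which is connected and has at least two chords) and summing against $\alpha_i$ gives $\cFsumConn(P^{\epsilon_1}_1 \sharp_{(p_1, p_2)} P^{\epsilon_2}_2) = \cFsumConn(P^{\epsilon_1}_1) + \cFsumConn(P^{\epsilon_2}_2)$, which is additivity. I expect the main obstacle to lie in the second paragraph: one must check carefully that the base-point ambiguity, the induced orientation, and the positive-crossing and starting/end-point conventions all combine to produce precisely the product arrow diagram $AD_{P^{\epsilon_1}_1} \cdot AD_{P^{\epsilon_2}_2}$ and not, say, its reverse or reflection; the counting identity and the reduction to Corollary~\ref{g_thm2} are then formal.
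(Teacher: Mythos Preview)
Your proposal is correct and follows essentially the same route as the paper. The paper derives the invariance directly from Theorem~\ref{gg_thm2} by checking the $\check{R}_{10000}(b,d)$ condition (using that connected diagrams with $\ge 2$ chords are irreducible), whereas you pass through Corollary~\ref{g_thm2}, which already packages that same check; and for additivity the paper simply asserts $x^*_i(P^{\epsilon_1}_1 \sharp P^{\epsilon_2}_2) = x^*_i(P^{\epsilon_1}_1) + x^*_i(P^{\epsilon_2}_2)$ ``by geometric observations,'' while you make explicit the underlying reason (the arrow diagram of a connected sum is the product $AD_{P^{\epsilon_1}_1}\cdot AD_{P^{\epsilon_2}_2}$, and a connected sub-arrow diagram cannot straddle the two factors). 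These are the same ideas with only cosmetic differences in routing.
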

\noindent{\bf{Proof of Corollary~\ref{cor3b} from Theorem~\ref{gg_thm2}.}} 
Since $b \ge 2$ and $i \ge \check{n}_{b-1} +1$,  each $x^*_i$ consists of more than one chords, i.e., $x^*_i \in \check{\irr}$ (see the note preceding Definition~\ref{additivity_def_color}). 

By Theorem~\ref{gg_thm2}, it is enough to show
\begin{equation}\label{eq4_color}
\caFsumConn (r^*)=0  \quad (\forall r^* \in \check{R}_{10000} (b, d))
\end{equation}
for a proof of Corollary~\ref{cor3b}.  We first note that if $x^*_i \in \check{\irr}$, then $x^*_i$ has no isolated chords.  On the other hand, let $r^* \in \check{R}_{10000}(b, d)$, that is, there exist an oriented Gauss word $S$ and a letter $j$ such that $r^* = [[Sj\bar{j}]]$ or $[[S\bar{j}j]]$.  Then, the chord corresponding to $j$ is isolated.  
These show that $\tilde{x}^*_i (r^*)=0$.  This shows that (\ref{eq4_color}) holds.

Further, by using geometric observations as in Example~\ref{example2}, it is clear to show that if $x^*_i \in \check{\conn}$, then 
\begin{equation}\label{eq4c}
{x^*_i}(P^{\epsilon_1}_1 \sharp_{(p_1,~p_2)} P^{\epsilon_2}_2) = {x^*_i}(P^{\epsilon_1}_1) + {x^*_i}(P^{\epsilon_2}_2)
\end{equation}
for any $P^{\epsilon_1}_1 \sharp_{(p_1,~p_2)} P^{\epsilon_2}_2$.  This fact implies $\caFsumConn$ is additive.  
These facts together with Theorem~\ref{gg_thm2} immediately give Corollary~\ref{cor3b}.   
$\hfill\Box$

\section{Proof of Theorem~\ref{gg_thm2}.}

$\bullet$ (Proof of Theorem~\ref{gg_thm2} for the case $\epsilon_1 =1$.) 
Let $P$ and $P'$ be two oriented spherical curves where $P^{\epsilon}$ is related to ${P'}^{\epsilon}$ by a single RI, hence, there exist an oriented letter $i$ and an oriented Gauss word $S$ such that $AD_{P^{\epsilon}}$ $=$ $[[Si\bar{i}]]$ or $[[S\bar{i}i]]$ and $AD_{{P'}^{\epsilon}}$ $=$ $[[S]]$.  Since the arguments are essentially the same, we may suppose, without loss of generality, $AD_{P^{\epsilon}}$ $=$ $[[Si\bar{i}]]$. 
As we observed just before Definition~\ref{dfn_relator2}, we have
\begin{align*}
\cFsum({P}^\epsilon) &-\cFsum({P'}^\epsilon) \\
&= \cNsum \sum_{z_0 \in \sub^{(0)}(G^*)} \cat \left( [[z_0 i\bar{i}]] \right).
\end{align*}
By the assumption of this case, for each $z_0 \in \sub^{(0)}(G^*)$, 
\[
\cNsum \cat \left( [[z_0 i\bar{i}]] \right)=0
\]
and this shows that 
\[
\cFsum({P}^\epsilon) = \cFsum({P'}^\epsilon).  
\]

$\bullet$ (Proof of the case $\epsilon_2 =1$.)  Let $P^{\epsilon}$ and ${P'}^\epsilon$ be two oriented spherical curves where $P^{\epsilon}$ is related to ${P'}^\epsilon$ by a single strong~RI\!I, hence, there exist two oriented Gauss words $G=Si\bar{j}Tj \bar{i}$ (or $S\bar{i}jT\bar{j}i$) and $G'=STU$ corresponding to $P^{\epsilon}$ and ${P'}^\epsilon$, respectively, i.e., $AD_{P^\epsilon}$ $=$ $[[Si\bar{j}Tj\bar{i}]]$ (or $AD_{P^{\epsilon}}$ $=$ $[[S\bar{i}jT\bar{j}i]]$) and $AD_{{P'}^\epsilon}$ $=$ $[[ST]]$.  
First, suppose $AD_{P^\epsilon}$ $=$ $[[Si\bar{j}Tj\bar{i}]]$. 
  
By (\ref{tilde_x*}) in Subsection~\ref{ss_def}
 and (\ref{sub*}) in Subsection~\ref{sec_main_result_2}, we obtain (note that $\sub^{(3)} (G^*)$ $=$ $\emptyset$): 

\begin{align*}
&\cFsum (P^{\epsilon}) = \scFsum \left(\sum_{z^* \in \sub(G^*)} \tilde{x}^*_i (z^*) \right) \\
&= \scFsum \left(  \sum_{z^*_0 \in \sub^{(0)}(G^*)} \tilde{x}^*_i (z^*_0) \right)\\
&\qquad\qquad\qquad + \cNsum \sum_{z^*_{12} \in \sub^{(1)}(G^*) \cup \sub^{(2)} (G^*)} \alpha_i \tilde{x}^*_i (z^*_{12})\\
&= \scFsum \left(  \sum_{{z'}^* \in \sub({G'}^*)} \tilde{x}^*_i ({z'}^*) \right)\\
&\qquad\qquad\qquad + \cNsum \sum_{z^*_{12} \in \sub^{(1)}(G^*) \cup \sub^{(2)} (G^*)} \alpha_i \tilde{x}^*_i (z^*_{12})
\end{align*}
($\because$ $\sub^{(0)}(G^*)$ is identified with $\sub({G'}^*)$).

Let $z^*_0 \in \sub^{(0)}(G^*)$.
We note that since $G$ is an oriented Gauss word $z_0$ uniquely admits a decomposition into two sub-words, which are sub-words on $S$ and $T$.
Let $\sigma(z^*_0)$ be the sub-word of $S$ and $\tau(z^*_0)$ the sub-word of $T$ satisfying $z^*_0$ $=$ $\sigma(z^*_0)\tau(z^*_0)$ (for the definition of sub-words, see Definition~\ref{ss_def}).  Under these notations, we define maps
\begin{align*}
z^*_2: &\sub^{(0)}(G^*) \to \sub^{(2)}(G^*); z^*_2 (z^*_0) = \sigma(z^*_0)i\bar{j}\tau(z^*_0)j\bar{i},\\
z^*_1: &\sub^{(0)}(G^*) \to \sub^{(1)}(G^*); z^*_1 (z^*_0) = \sigma(z^*_0)i\tau(z^*_0)\bar{i}~{\textrm{, and}}\\
{z'}^*_1: &\sub^{(0)}(G^*) \to \sub^{(1)}(G^*); {z'}^*_1 (z^*_0) = \sigma(z^*_0)\bar{j}\tau(z^*_0)j.    
\end{align*}
Then, it is easy to see that $\sub^{(1)} (G^*) \cup \sub^{(2)} (G^*)$ admits a decomposition 
\begin{align*}
&\sub^{(1)} (G^*) \cup \sub^{(2)} (G^*) \\
&=  \{ z^*_1 (z^*_0) ~|~ \forall z^*_0 \in \sub^{(0)} (G^*) \} \amalg \{ {z'}^*_1 (z^*_0) ~|~ \forall z^*_0 \in \sub^{(0)} (G^*) \} \amalg \{ z^*_2 (z^*_0) ~|~ \forall z^*_0 \in \sub^{(0)}(G^*) \}.  
\end{align*}
These notations together with the above give: 
\begin{align*}
\cFsum(P^{\epsilon}) &= \cFsum ({P'}^{\epsilon}) \\
& + \cNsum \sum_{z^*_0 \in \sub^{(0)}(G^*)} \cat (z^*_2 (z^*_0) + z^*_1 (z^*_0) + {z'}^*_1 (z^*_0))\\
&= \cFsum ({P'}^{\epsilon}) \\
&+ \cNsum \sum_{z^*_0 \in \sub^{(0)}(G^*)} \cat ([z^*_2 (z^*_0)] + [z^*_1 (z^*_0)] + [{z'}^*_1 (z^*_0)]).
\end{align*}
Here, we note that by the condition for the case $\epsilon_2$ $=$ $1$, for any for any $z^*_0 \in \sub^{(0)}(G^*)$, 
\[
\cNsum \cat ([z^*_2 (z^*_0)] + [z^*_1 (z^*_0)] + [{z'}^*_1 (z^*_0)]) = 0.
\]
Here, one may think that
\[
\tilde{x}^*_i ([z^*_1 (z^*_0)] + [{z'}^*_1 (z^*_0)] + [z^*_2 (z^*_0)]) = 0
\]
for each $z^*_0$ by the condition for the case $\epsilon_2$ $=$ $1$.   
However, the condition says that the equation holds for each $r^* \in \check{R}_{01000}(b, d)$, and we note that 
$[z^*_1 (z^*_0)]$ $+$ $[{z'}^*_1 (z^*_0)]$ $+$ $[z^*_2 (z^*_0)]$ may not be an element of $\check{R}_{01000}(b, d)$ (possibly $\check{f}^{-1}([z^*_2 (z^*_0)]) > \check{n}_d$ or $\check{f}^{-1}([z^*_1 (z^*_0)]) \le \check{n}_{b-1}$), and $\check{O}_{b, d} ([z^*_1 (z^*_0)] + [{z'}^*_1 (z^*_0)] + [z^*_2 (z^*_0)])$ $\neq 0$.  
However, even when this is the case we see that 
\[
\tilde{x}^*_i ([z^*_1 (z^*_0)] + [{z'}^*_1 (z^*_0)] + [z^*_2 (z^*_0)]) = 0
\]by Proposition~\ref{relator_prop*}. 

Thus,
\begin{align*}
\cFsum (P^{\epsilon}) = \cFsum ({P'}^{\epsilon}).
\end{align*}

The proof for the case when $AD_{P^\epsilon}$ $=$ $[[S\bar{i}jT\bar{j}iU]]$ can be carried out as above, and omit it.  

$\bullet$ (Proof of the case $\epsilon_3 =1$.)  
Since the arguments are essentially the same as that of the case $\epsilon_2$ $=$ $1$, we omit this proof. 

$\bullet$ (Proof of the case $\epsilon_4 =1$.)  Let $P^{\epsilon}$ and ${P'}^\epsilon$ be two spherical curves where $P^{\epsilon}$ is related to ${P'}^\epsilon$ by a single strong~RI\!I\!I, hence, there exist two oriented Gauss words $G=S\bar{i}jT\bar{k}iU\bar{j}k$ and $G'=Sj\bar{i}Ti\bar{k}Uk\bar{j}$ corresponding to $P^{\epsilon}$ and ${P'}^\epsilon$, respectively, i.e., $AD_{P^\epsilon}$ $=$ $[[S\bar{i}jT\bar{k}iU\bar{j}k]]$ and $AD_{{P'}^\epsilon}$ $=$ $[[Sj\bar{i}Ti\bar{k}Uk\bar{j}]]$.   
 
By (\ref{tilde_x*}) in Subsection~\ref{ss_def}
 and (\ref{sub*}) in Subsection~\ref{sec_main_result_2}, we obtain: 
\begin{align*}
&\cFsum (P^{\epsilon}) = \scFsum \left( \sum_{z^* \in \sub(G^*)} \tilde{x}^*_i (z^*) \right)  \\
&= \scFsum \left( \sum_{z^*_{01} \in \sub^{(0)}(G^*) \cup \sub^{(1)}(G^*)} \tilde{x}^*_i (z^*_{01}) 
+ \sum_{z^*_{23} \in \sub^{(2)}(G^*) \cup \sub^{(3)}(G^*)} \tilde{x}^*_i (z^*_{23})
 \right).  
\end{align*}
and
\begin{align*}
&\cFsum ({P'}^{\epsilon}) = \scFsum \left( \sum_{{z'}^* \in \sub({G'}^*)} \tilde{x}^*_i ({z'}^*) \right)  \\
&= \scFsum \left( \sum_{{z'}^*_{01} \in \sub^{(0)}(G^*) \cup \sub^{(1)}(G^*)} \tilde{x}^*_i ({z'}^*_{01}) 
+ \sum_{z^*_{23} \in \sub^{(2)}(G^*) \cup \sub^{(3)}(G^*)} \tilde{x}^*_i ({z'}^*_{23})
 \right).
\end{align*}

Since $\sub^{(0)}(G^*)$ ($\sub^{(1)}(G^*)$ resp.) is naturally identified with $\sub^{(0)}({G'}^*)$ ($\sub^{(1)}({G'}^*)$ resp.), the above equations show: 
\begin{align*}
&\cFsum (P^{\epsilon}) - \cFsum ({P'}^{\epsilon})\\ 
&= \cNsum \sum_{z^*_{23} \in \sub^{(2)}(G^*) \cup \sub^{(3)} (G^*)} \cat (z^*)\\
&\qquad - \cNsum \sum_{{z'}^*_{23} \in \sub^{(2)}({G'}^*) \cup \sub^{(3)} ({G'}^*)} \cat ({z'}^*).   
\end{align*}

Let $z^*_0 \in \sub^{(0)}(G^*)$, which is identified with $\sub^{(0)}({G'}^*)$. 
We note that since $G^*$ is an oriented Gauss word $z^*_0$ uniquely admits a decomposition into three sub-words, which are sub-words on $S$, $T$, and $U$. 
Let $\sigma(z^*_0)$ be the sub-word of $S$, $\tau(z^*_0)$ the sub-word of $T$, and $\mu(z^*_0)$ the sub-word of $U$ satisfying $z^*_0$ $=$ $\sigma(z^*_0)\tau(z^*_0)\mu(z^*_0)$.  We define maps 
\begin{align*}
z^*_3: &\sub^{(0)}(G^*) \to \sub^{(3)}(G^*); z^*_3 (z^*_0) = \sigma(z^*_0) \bar{i}j \tau(z^*_0) \bar{k}i \mu(z^*_0) \bar{j}k,\\ 
z^*_{2a}: &\sub^{(0)}(G^*) \to \sub^{(2)}(G^*); z^*_{2a} (z^*_0) = \sigma(z^*_0) \bar{i}j \tau(z^*_0) i \mu(z^*_0) \bar{j}, \\
z^*_{2b}: &\sub^{(0)}(G^*) \to \sub^{(2)}(G^*); z^*_{2b} (z^*_0) = \sigma(z^*_0) \bar{i} \tau(z^*_0) \bar{k}i \mu(z^*_0) k,~{\text{and}}~\\
z^*_{2c}: &\sub^{(0)}(G^*) \to \sub^{(2)}(G^*); z^*_{2c} (z^*_0) = \sigma(z^*_0) j \tau(z^*_0) \bar{k} \mu(z^*_0) \bar{j}k.
\end{align*}
Similarly, let 
\begin{align*}
{z'}^*_3: &\sub^{(0)}({G'}^*) \to \sub^{(3)}({G'}^*); {z'}^*_3 (z^*_0) = \sigma(z^*_0) j\bar{i} 
 \tau(z^*_0) i\bar{k} \mu(z^*_0) k\bar{j},\\
{z'}^*_{2a}: &\sub^{(0)}({G'}^*) \to \sub^{(2)}({G'}^*); {z'}^*_{2a} (z^*_0) = \sigma(z^*_0) j\bar{i} \tau(z^*_0) i \mu(z^*_0) \bar{j}, \\
{z'}^*_{2b}: &\sub^{(0)}({G'}^*) \to \sub^{(2)}({G'}^*); {z'}^*_{2b} (z^*_0) = \sigma(z^*_0) \bar{i} \tau(z^*_0) i\bar{k} \mu(z^*_0) k,~{\text{and}}~ \\
{z'}^*_{2c}: &\sub^{(0)}({G'}^*) \to \sub^{(2)}({G'}^*); {z'}^*_{2c} (z^*_0) = \sigma(z^*_0) j \tau(z^*_0) \bar{k} \mu(z^*_0) k\bar{j}.
\end{align*}
Then, it is easy to see that $\sub^{(2)} (G^*) \cup \sub^{(3)} (G^*)$ admits decompositions 
\begin{align*}
&\sub^{(2)} (G^*) \cup \sub^{(3)} (G^*) \\
&= \{ z^*_3 (z^*_0) ~|~ z^*_0 \in \sub^{(0)}(G^*) \}
\amalg \{ z^*_{2a} (z^*_0) ~|~ z^*_0 \in \sub^{(0)} (G^*) \} \\
& \amalg \{ z^*_{2b} (z^*_0) ~|~ z^*_0 \in \sub^{(0)} (G^*) \}  
\amalg \{ z^*_{2c} (z^*_0) ~|~ z^*_0 \in \sub^{(0)}(G^*) \}
\end{align*}
and 
\begin{align*}
& \sub^{(2)}({G'}^*) \cup \sub^{(3)} ({G'}^*) \\
&= \{ {z'}^*_3 (z^*_0) ~|~ z^*_0 \in \sub^{(0)}({G}^*) \}
\amalg \{ {z'}^*_{2a} (z^*_0) ~|~ z^*_0 \in \sub^{(0)} ({G}^*) \} \\
& \amalg \{ {z'}^*_{2b} (z^*_0) ~|~ z^*_0 \in \sub^{(0)} ({G}^*) \} 
\amalg \{ {z'}^*_{2c} (z^*_0) ~|~ z^*_0 \in \sub^{(0)} ({G}^*) \}.
\end{align*}
By using these notations, we obtain: 
\begin{align*}
&\cFsum (P^{\epsilon}) - \cFsum ({P'}^{\epsilon})
\\
&= \sum_{z^*_0 \in \sub^{(0)}(G^*)}  \caFsum \Big( (z^*_3 (z^*_0) + z^*_{2a} (z^*_0) + z^*_{2b} (z^*_0) + z^*_{2c} (z^*_0))\\ 
&- ({z'}^*_3(z^*_0) + {z'}^*_{2a}(z^*_0) + {z'}^*_{2b}(z^*_0) + {z'}^*_{2c}(z^*_0)) \Big) \\
&= \sum_{z^*_0 \in \sub^{(0)}(G^*)} \caFsum \Big(  ([z^*_3 (z^*_0)] + [z^*_{2a} (z^*_0)] + [z^*_{2b} (z^*_0)] + [z^*_{2c} (z^*_0)])\\ 
& - ([{z'}^*_3(z^*_0)] + [{z'}^*_{2a}(z^*_0)] + [{z'}^*_{2b}(z^*_0)] + [{z'}^*_{2c}(z^*_0)]) \Big).
\end{align*}
Here, we note that 

$([z^*_3 (z^*_0)] + [z^*_{2a} (z^*_0)] + [z^*_{2b} (z^*_0)] + [z^*_{2c} (z^*_0)]) - ([{z'}^*_3(z^*_0)]$ $+$ $[{z'}^*_{2a}(z^*_0)]$ $+$ $[{z'}^*_{2b}(z^*_0)]$ $+$ $[{z'}^*_{2c}(z^*_0)])$ $\in \check{R}_{00010}$.  

Hence, by the assumption of Case $\epsilon_4$ $=$ $1$ and by Proposition~\ref{relator_prop*} (cf.~Proof of the case $\epsilon_2$ $=$ $1$), for each $z^*_0$, 
\begin{align*}
&\caFsum  \Big(  ([z^*_3 (z^*_0)] + [z^*_{2a} (z^*_0)] + [z^*_{2b} (z^*_0)] + [z^*_{2c} (z^*_0)]) \\
&\qquad\qquad\qquad  - ([{z'}^*_3(z^*_0)] + [{z'}^*_{2a}(z^*_0)] + [{z'}^*_{2b}(z^*_0)] + [{z'}^*_{2c}(z^*_0)]) \Big)\\
&\qquad = 0.  
\end{align*}
These show that  
\[ \cFsum ({P}^{\epsilon}) = \cFsum ({P'}^{\epsilon}).
\]

The proof for the case when $AD_{P^\epsilon}$ $=$ $[[Sk\bar{j}Ti\bar{k}Uj\bar{i}]]$ can be carried out as above, and omit it.

$\bullet$ (Proof of the case $\epsilon_5 =1$.)  
Since the arguments are essentially the same as that of the case $\epsilon_4$ $=$ $1$, we omit this proof. 

Note that either condition (1) or (2) is satisfied, for each mirroring pair $(x^*_i, x^*_j)$, we have: 
\begin{align*}
\alpha_i x^*_i (P^{+}) + \alpha_j x^*_j (P^{+})
&= \alpha_i x^*_j (P^{-}) + \alpha_j x^*_i (P^{-}), \\
&(\because x^*_i (P^{\epsilon}) = x^*_j (P^{- \epsilon})~{\textrm{by Remark~\ref{remark2}}})\\
&= \alpha_j x^*_j (P^{-}) + \alpha_i x^*_i (P^{-})~{\textrm{(by Condition~(1) of Theorem~\ref{gg_thm2}}}) \\
\end{align*}
or
\[
x^*_i (P^{+}) = x^*_j (P^{-}) = x^*_i (P^{-})~{\text{(by Condition~(2) of Theorem~\ref{gg_thm2})}}.  
\]
Then $\cFsum (P^{+})$ $=$ $\cFsum (P^-)$ for each spherical curve $P^{\epsilon}$.  
$\hfill\Box$

\section{A method of a computation by a computer program}\label{takamura}
In order to obtain the following program, we use $C++11$ and Mathematica, and to compute a rank of a vector space (i.e., a kernel space) of  functions, each of which is invariant under RI and strong or weak RI\!I\!I.  
\subsection{Nomal Gauss words}\label{un}
In this section, we introduce normal Gauss words for (unoriented) Gauss words.    Since it is worth seeing the unoriented case before we see the oriented case in Section~\ref{oriented_com}, we first discuss it to obtain the set of Gauss words.  
\begin{definition}[normal Gauss word]
For every Gauss word $G$, each letter appears twice in $G$.  Then, the letter firstly (secondary, resp.) appearing is called the \emph{former} (\emph{latter}, resp.) letter.    
Let $G$ be a Gauss word of length $2n$.  Suppose that we read letters in $G$ from the left to the right, and the former letters are labelled in the order $1, 2, 3, \ldots, n$.  Then, the Gauss word $G$ is called a \emph{normal Gauss word}.   The set of normal Gauss words of length $2d$ is denoted by $N_{d}$.  
\end{definition}
\begin{example}
The Gauss word $121233$ ($1234255143$, resp.) is a normal Gauss word.  The Gauss word $131322$ ($2342551431$, resp.) is \emph{not} a normal Gauss word.   The Gauss word obtained by Definition~\ref{dfn_cdp} is a normal Gauss word.  
\end{example}

By using normal Gauss words, $N_{d}$ is obtained from $N_{d-1}$.   First, for an element of $N_{d-1}$, we replace each letter $i$ with $i+1$, and the resulting Gauss word is denoted by $w$.  
Second, we consider $2d$ blanks arranged in a line, and  we put the former of the letter ``$1$'' in the leftmost blank and consider every possibility of the position of the latter of the letter ``$1$''.   Then, we ignore the former and the latter of the letter ``$1$" and put $w$ into $2(d-1)$ ordered blanks.   The process obtains a normal Gauss word in $N_d$.  By the induction, for every $d$, we have $N_d$, which implies $G_{\le d}$ by giving $[w]$ (for the definition of an equivalence class $[w]$, Definition~\ref{ori_g}).    
\begin{figure}[h!]
\centering
\includegraphics[width=7cm]{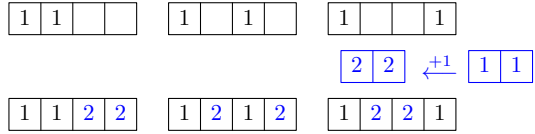}
\caption{$N_2$ is obtained by $N_1$.}
\label{n2}
\end{figure}

Next, we enumerate elements in $\check{R}_{00001}$.   Let us start with a toy model of a linear sum of unoriented Gauss words.  Let
$S$, $T$, and $U$ be sub-words where $STU$ is a Gauss word.  Then, let 
\begin{align*}
r& =\left( [Si\,jTikUjk] + [Si\,jTiUj] + [SiTikUk] + [SjTkUjk] \right) \\
- & \left( [Sj\, iTkiUkj] + [Sj\, iTiUj] + [SiTkiUk] + [SjTkUkj]  \right).
\end{align*}
It is clear that $r$ is obtained from a normal Gauss word of type $12Si1Ti2U$, as shown in Fig.~\ref{fig_t_E}, which is given by adding letters to $123132$.  
\begin{figure}[h!]
\begin{center}
\includegraphics[width=2cm]{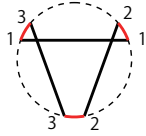}
\caption{The chord diagram corresponding to $12Si1Ti2U$ ($i=3$).  }
\label{fig_t_E}
\end{center}
\end{figure}
As a first example we add a single letter (corresponding to a chord as in Fig.~\ref{fig_t_G}) to $123132$.  Then, we have chord diagrams as in Fig.~\ref{fig_t_F}.
\begin{figure}[h!]
\begin{center}
\includegraphics[width=8cm]{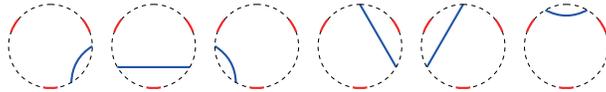}
\caption{The chord diagram corresponding to $123132$}
\label{fig_t_G}
\end{center}
\end{figure}
\begin{figure}[h!]
\begin{center}
\includegraphics[width=10cm]{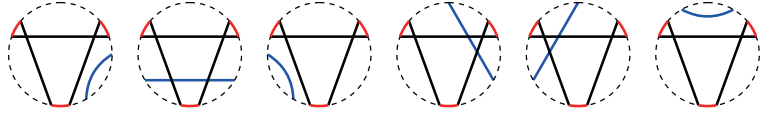}
\caption{Each chord added to  $[123132]$}
\label{fig_t_F}
\end{center}
\end{figure}
Such a word is regarded as a word consisting of  three dots and a Gauss words of length $2$ as follows.  
\begin{equation*}
  \begin{array}{cccccc}
   \rbullet \rbullet \rbullet \textcolor{blue}{4} \textcolor{blue}{4} &
    \rbullet \rbullet \textcolor{blue}{4} \rbullet  \textcolor{blue}{4} &
    \rbullet \rbullet \textcolor{blue}{4} \textcolor{blue}{4} \rbullet&
    \rbullet \textcolor{blue}{4} \rbullet \rbullet \textcolor{blue}{4} &
    \rbullet \textcolor{blue}{4} \rbullet \textcolor{blue}{4} \rbullet&
    \rbullet \textcolor{blue}{4} \textcolor{blue}{4} \rbullet  \rbullet
  \end{array}
 \end{equation*}
 
In general, we obtain a normal Gauss word of type $12Si1Ti2U$ by the following steps:

\begin{itemize}
\item (Step~1) The list of normal Gauss words of length $2(n-3)$ is given.  
\item  (Step~2) For a normal Gauss word $w$ of length $2(n-3)$, we send $m$ each letter to $m+1$.     
\item  (Step~3) List the all possibilities of immersions of three dots into $w$.  
\item  (Step~4) Replace a leftmost (center, rightmost, resp.) dot with two successive letters ``$12$" (``$31$", ``$32$", resp.).  
\item  (Step~5) Fix the normal Gauss word that is isomorphic to the resulting Gauss word.  
\end{itemize}
For every positive integer $n$ ($n \ge 4$), the process obtains the set $\{v~|~v$  is a normal Gauss word of type $12Si1Ti2U$ of length $2n$ $\}$.  

\subsection{Normal oriented Gauss words}\label{oriented_com}
Next, we see a computation using oriented Gauss words.  In our program, denote an oriented Gauss word $w^*$ by \verb+letter_list+ that is a sequence of letters corresponds to $w^*$.  In order to obtain a description of an orientation of letters, let   
\verb+letter_list[i]+ is $0$ ($1$, resp.) if $w^*(i)$ is a starting point (an end point, resp.).  
\begin{example}
Let $w$ be $\bar{1}2\bar{3}13\bar{2}$.  Then, it is clear that $\verb+letter_list+$ $=$ $\{1,2,3,1,3,2\}$, 
and $\verb+orientation_list+$ $=$ $\{1,0,1,0,0,1\}$.  
\end{example}
\begin{definition}
An oriented Gauss word $w^*$ is called a \emph{normal oriented Gauss word} if $w^*$ is a normal Gauss word.  
\end{definition}
The orientations of letters are enumerated by a lexicographic order in the following: 
for a Gauss word of length $2$, all possibilities are $\rightarrow$ and $\leftarrow$ that are represented by 
 $\{0,1\}$ and $\{1, 0\}$, respectively in our program.  

For a Gauss word of length $4$, all possibilities are
   \begin{center}
    \{\{0,1\}, \{0,1\}\},\quad
    \{\{0,1\}, \{1,0\}\},\qquad
    \{\{1,0\}, \{0,1\}\},\quad
    \{\{1,0\}, \{1,0\}\}.  
   \end{center}

\begin{example}
$w^* = \textcolor{red}{1}\textcolor{green}{2}\textcolor{blue}{3}132$ with the information of  orientations
    $\{ \{\textcolor{red}{1},0\}, \{\textcolor{green}{0},1\}, \{\textcolor{blue}{1},0\}\}$ 
    impliy
    $\verb+orientation_list+ = \{\textcolor{red}{1},\textcolor{green}{0}, \textcolor{blue}{1},0,0,1\}$.   
   \end{example}

Then, by applying the process (Step~1)--(Step~5) and the above lexicographic order with respect to $0$ and $1$, we have $\check{G}_{\le n}$ the set of isomorphism classes of oriented Gauss words of length $\le 2n$.  
Similar to an unoriented case (the toy model), we obtain $\{v~|~v$  is a normal oriented Gauss word of type $jkS\bar{i}\,\bar{j}Ti\bar{k}U$ or $\bar{j}\bar{k}Si\,jT\bar{i}kU$ of length $2n$ $\}$, which obtains $\check{R}_{00001}(2, n)$.    

\begin{example}[$\check{R}_{00010}(2,3)$]\label{strong23}
Every element in $\check{G}_{2, 3} \cap \conn$ is listed as follows.  
\begin{center}     
\includegraphics[height=1cm]{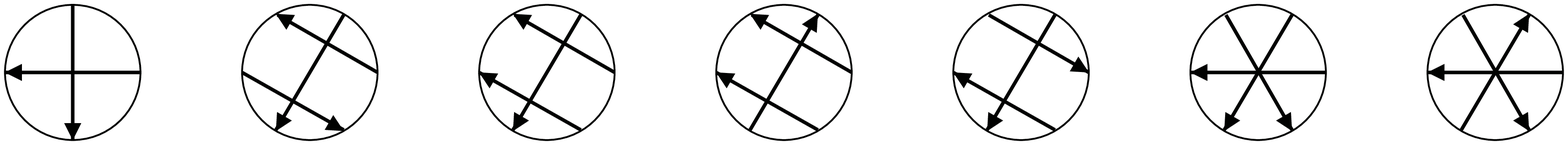}
\end{center}
Then, (if necessary,) we redefine $x^*_1$ $=$ $\ax$, 
$x^*_2$ $=$ $\ahb$, 
$x^*_3$ $=$ $\ahd$,
$x^*_4$ $=$ $\ahc$,
$x^*_5$ $=$ $\aha$,
$x^*_6$ $=$ $\atra$, and
$x^*_7$ $=$ $\atrb$.  
      Also every oriented Gauss word of type $\bar{j}kS\bar{i}\,jT\bar{k}iU$ or $\bar{j}iSk\,\bar{j}Ti\bar{k}U$ in $\check{R}_{00010}(2, 3)$ is listed as follows. 
       \begin{center}\includegraphics[height=1cm]{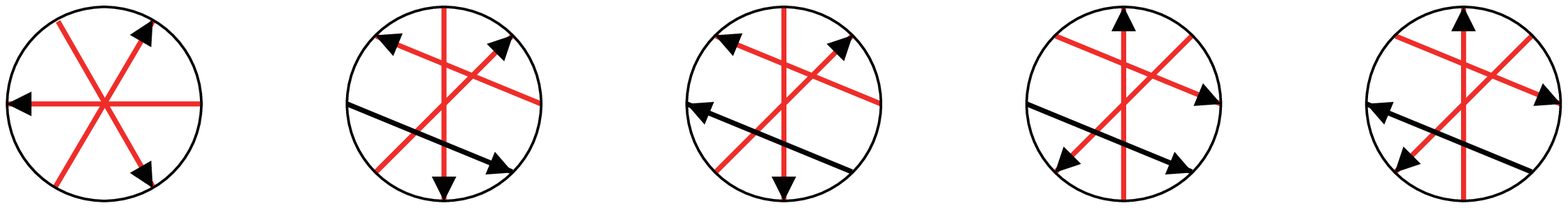}
       \end{center}
      Then, we list relators
      \begin{align*}
r^*_{s1} &= \atrb + \ax + \ax + \ax -  \ \bii -  \  \bchordth -    \bchordth -    \bchordth,\\
r^*_{s1a} &= \atrb + \ax + \ax + \ax - \ \bv   - \achordth - \achordth - \achordth, \\
r^*_{s2} &= \atra + \ahb + \ahb  -  \gchordtwo - \ahb - \gchordtwo, \\
r^*_{s3} &= \atrb + \ahd + \ahc - \achordtwo -\ahb - \echordtwo, \\
r^*_{s4} &= \atra + \aha + \aha - \dchordtwo - \aha - \dchordtwo,~{\textrm{and}}~ \\
r^*_{s5} &= \ahc + \atrb + \ahd - \aha - \bchordtwo - \fchordtwo.
      \end{align*}
      Here, note that $\check{O}_{\irr}(r^*_{s1})$ $=$ $\check{O}_{\irr}(r^*_{s1a})$ and Lemma~\ref{lemma3}.    
      Thus, letting $A$ $=$ $(x^*_i (r^*_{sj}))_{1 \le i \le 7, 1 \le j \le 5}$, we have
      \begin{equation*}
 A  = 
  \begin{bmatrix}
3&0&0&0&0\\
0&1&-1&0&0\\
0&0&1&1&0\\
0&0&1&1&0\\
0&0&0&-1&1\\
0&1&0&0&1\\
1&0&1&1&0
  \end{bmatrix}.
\end{equation*}
      Let $V_s(2,3) =  \{\xx\ |\ \xx A = \oo\}$.  
      It is elementary to show that $\dim V_w(2,3) = 3$ and the set of solutions is $\{\bf{x} =$ $\gamma_1 (1, 0, 3, 0, 0, 0, -3)$ $+ \gamma_2 (0, 1, 1, 0, 1, -1, 0)$ $+$ $\gamma_3 
(0, 0, 1, -1, 0, 0, 0)~|~\gamma_1, \gamma_2, \gamma_3 \in \mathbb{Z} \}$.  

By Corollary~\ref{cor3b}, the following functions are invariant of spherical curves under RI and strong~RI\!I\!I.
\begin{equation}\label{thm3}
\begin{split}
&\ax + 3 \ahd - 3 \atrb,\\
&\aha + \ahb + \ahc - \atra,~{\text{and}}~ \\
&\ahc - \ahd.  
\end{split}
\end{equation}

Note that \cite{IT3} introduced an invariant 
\[
\frac{1}{4} \left \{3 \left( \aha + \ahb + \ahc + \ahd \right) - 3 \left(\atra + \atrb \right) + \ax \right \}
\]
and it can be deduced from 
\begin{align*}
3 \left(\aha+\ahb+\ahc+\ahd \right) -3 \left( \atra + \atrb \right) + \ax\\
= 3 \left(\aha + \ahb + \ahc - \atra \right) + \left( \ax + 3 \ahd - 3 \atrb \right).
     \end{align*}
     \end{example}
\begin{example}[$\check{R}_{00001}(2,3)$]
Every element in $\check{G}_{2, 3} \cap \conn$ is listed as in Example~\ref{strong23}.  
Every oriented Gauss word of type $jkS\bar{i}\,\bar{j}Ti\bar{k}U$ or $\bar{j}\bar{i}Sk\,jT\bar{k}iU$ in $\check{R}_{00001}(2, 3)$ is listed as follows. 
       \begin{center}
       \includegraphics[width=12cm]{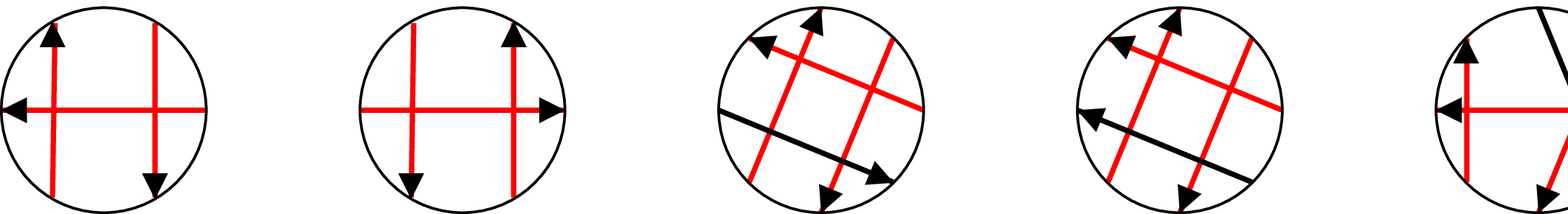}
       \end{center}

      Then, we list relators
      \begin{align*}
r^*_{w1} &= \aha + \achordth + \ax + \ax  - \cchordtwo - \bchordthb - \ax - \bchordthb,\\
r^*_{w2} &= \ahb + \bchordth + \ax + \ax - \hchordtwo - \bchordthb - \ax - \bchordthb, \\
r^*_{w3} &=  \aha+ \ahb + \ahb - \echordtwo - \atrb - \achordtwo, \\
r^*_{w4} &= \aha+ \ahc+ \ahd- \cchordtwo - \atra - \cchordtwo, \\
r^*_{w5} &= \dchordtwo + \aha + \atra - \fchordtwo - \aha - \ahd,\\
r^*_{w6} &= \fchordtwo + \ahd + \atra - \hchordtwo - \ahd - \ahc, \\
r^*_{w7} &= \bchordtwo + \atra + \ahc - \ahd - \ahc  - \hchordtwo, \\
r^*_{w8} &= \dchordtwo + \atra + \aha - \ahc - \aha - \bchordtwo, \\
r^*_{w9} &= \ahb + \ahc + \ahd - \hchordtwo - \atra - \hchordtwo,\\
r^*_{w10} &= \ahb + \aha + \aha - \bchordtwo - \atrb - \fchordtwo, \\
r^*_{w11} &= \achordtwo + \ahd + \atra - \cchordtwo - \ahd - \ahc, \\
r^*_{w12} &= \gchordtwo + \ahb + \atra - \achordtwo - \ahb - \ahd, \\
r^*_{w13} &=  \gchordtwo + \atra + \ahb - \ahc - \ahb  - \echordtwo,~{\textrm{and}}~ \\
r^*_{w14} &= \echordtwo + \atra + \ahc - \ahd - \ahc  - \cchordtwo.
      \end{align*}
      Thus, letting $A$ $=$ $(x^*_i (r^*_{wj}))_{1 \le i \le 7, 1 \le j \le 14}$, we have
      \begin{equation*}
 A =
  \begin{bmatrix}
1&1&0&0&0&0&0&0&0&0&0&0&0&0\\
0&1&2&0&0&0&0&0&1&1&0&0&0&0\\
0&0&0&1&-1&0&-1&0&1&0&0&-1&0&-1\\
0&0&0&1&0&-1&0&-1&1&0&-1&0&-1&0\\
1&0&1&1&0&0&0&0&0&2&0&0&0&0\\
0&0&0&-1&1&1&1&1&-1&0&1&1&1&1\\
0&0&-1&0&0&0&0&0&0&-1&0&0&0&0
  \end{bmatrix}
\end{equation*}
      Let $V_w(2,3) =  \{\xx\ |\ \xx A = \oo\}$.  It is elementary to show that $\dim V_w(2,3)$ $=$ $1$, and the set of the solutions is $\{ {\bf{x}}$ $=$ $\gamma (-1, 1, -1, -1, 1, -1, 3)$ $|~\gamma \in \mathbb{Z}\}$.  
Thus, \[ 
-\ax + \ahb - \ahc - \ahd - \aha - \atra + 3 \atrb
\]
is a weak (1, 3) homotopy invariant of oriented spherical curves.  
However, in \cite{gpv}, Goussarov, Polyak, Viro reviewed that it vanishes on the spherical curves (see \cite[Section~3.2, Page~1058]{gpv}).  
\end{example}
\begin{example}
Let $V^\text{ori}_s(b,b+1)$ ($V^\text{ori}_w(b,b+1)$, resp.) be the kernel space of each matrix $(x^*_i (r^*_j))$ with appropriate induces obtained by $x^*_i \in \check{G}_{b, b+1}$ and $r^*_j \in \check{R}_{00010}(b, b+1)$ ($\check{R}_{00001}(b, b+1)$, resp.).     
     \begin{equation*}
       \begin{array}{|c|c|c|c|c|c|}
	 \hline
	  b& 2 & 3 & 4 & 5 \\
	 \hline
	  \dim V_s^\text{ori}(b,b+1) & 3 & 18 & 145 &   \\
	 \hline	  	  
	  \dim V_w^\text{ori}(b,b+1) & 1 & 3 & 13 & 31  \\
	 \hline	  
       \end{array}
     \end{equation*}
 Surprisingly, if we proceed to a discussion using unoriented Gauss diagrams (cf.~Definition~\ref{ori_g}, Definition~\ref{def_chord}, Section~\ref{un}, and \cite{IT3}),  for every number $b$ of chords ($2 \le b \le 5$), there is no function that is invariant under weak (1, 3) homotopy (some examples for other homotopies, computations are given by \cite{Ito}).  However, by the above table, for oriented versions, i.e., for every $\check{G}_{b, b+1}$ ($2 \le b \le 5$), there exists a function that is invariant under weak (1, 3) homotopy.
   \end{example}

\section{Tables}\label{table_sec}   
Tables~\ref{t3} and \ref{t4} or Table~\ref{t1} list the prime reduced spherical curves up to seven double points.  
Table~\ref{t1} represents values of three invariants $(I_1, I_2, I_3)$ obtained from Section~\ref{oriented_com}.  In Table~\ref{t1}, for any pair of spherical curves $P$ and $P'$ in a box, $(I_1 (P), I_2 (P), I_3 (P))$ $=$ $(I_1 (P'), I_2 (P'), I_3 (P'))$.   
Further, except for the pair $7_4$ and $7_B$, it is easy to show that any pair of spherical curves in the same box in the leftmost column are related by a finite sequence generated by RI and strong~RI\!I\!I.  For the pair $7_4$ and $7_B$, see Fig.~\ref{seq}.  The sequence of Fig.~\ref{seq} is obtained by \cite{ITT_ildt}.  
\begin{figure}[h!]
\includegraphics[width=12cm]{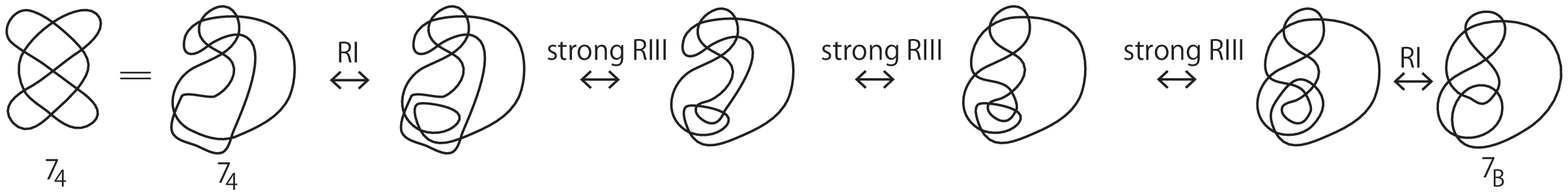}
\caption{A sequence generated by RI and strong~RI\!I\!I between $7_4$ and $7_B$.  The leftmost equality ``$=$" is an ambient isotopy on $S^2$.}\label{seq}
\end{figure}
Tables~\ref{t0a}--\ref{t0c} are the list of $\check{G}_{4, 5} \cap \conn$.  Tables~\ref{t0d1}--\ref{t0d13} are the $13$ tuples (corresponding to the formula $\dim V_w^\text{ori}(4, 5)$ $=$ $13$ preceding Conjecture~\ref{conjecture1}) consisting of the coefficients in the order of $\check{G}_{4, 5} \cap \conn$ (Tables~\ref{t0a}--\ref{t0c}).

Further, the list of $15922$ oriented Gauss diagrams in $\check{G}_{5, 6} \cap \conn$ and   
the $31$ tuples (corresponding to the formula $\dim V_w^\text{ori}(5, 6)$ $=$ $31$ preceding Conjecture~\ref{conjecture1}) consisting of the coefficients in the order of $\check{G}_{5, 6} \cap \conn$ are given by the webpage:\\
\texttt{http://www.chuo-u.gr.jp/\textasciitilde{}takamura/sc/index.html}. 

\begin{table}[h!]
\caption{Prime spherical curves, positive knot diagrams, and values of $x^*$.  In this table, for each $P$ and for each $x^*$, $x^*(P^{+})$ $=$ $x^*(P^{-})$.
}\label{t3}
\includegraphics[width=12cm]{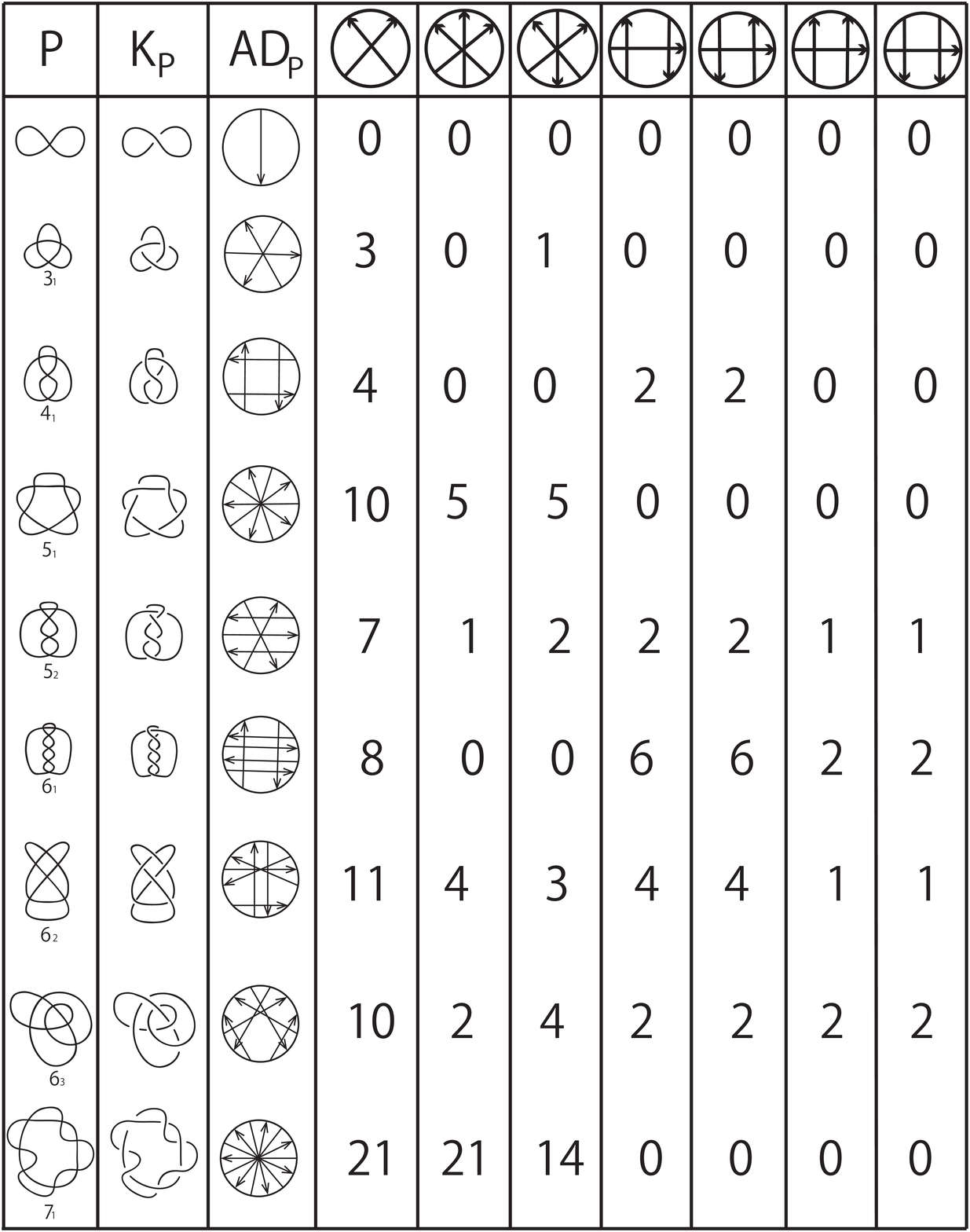}
\end{table}
\begin{table}[h!]
\caption{Prime spherical curves, positive knot diagrams, and values of $x^*$.  In this table, for each $P$ and for each $x^*$, $x^*(P^{+})$ $=$ $x^*(P^{-})$.    
}\label{t4}
\includegraphics[width=12cm]{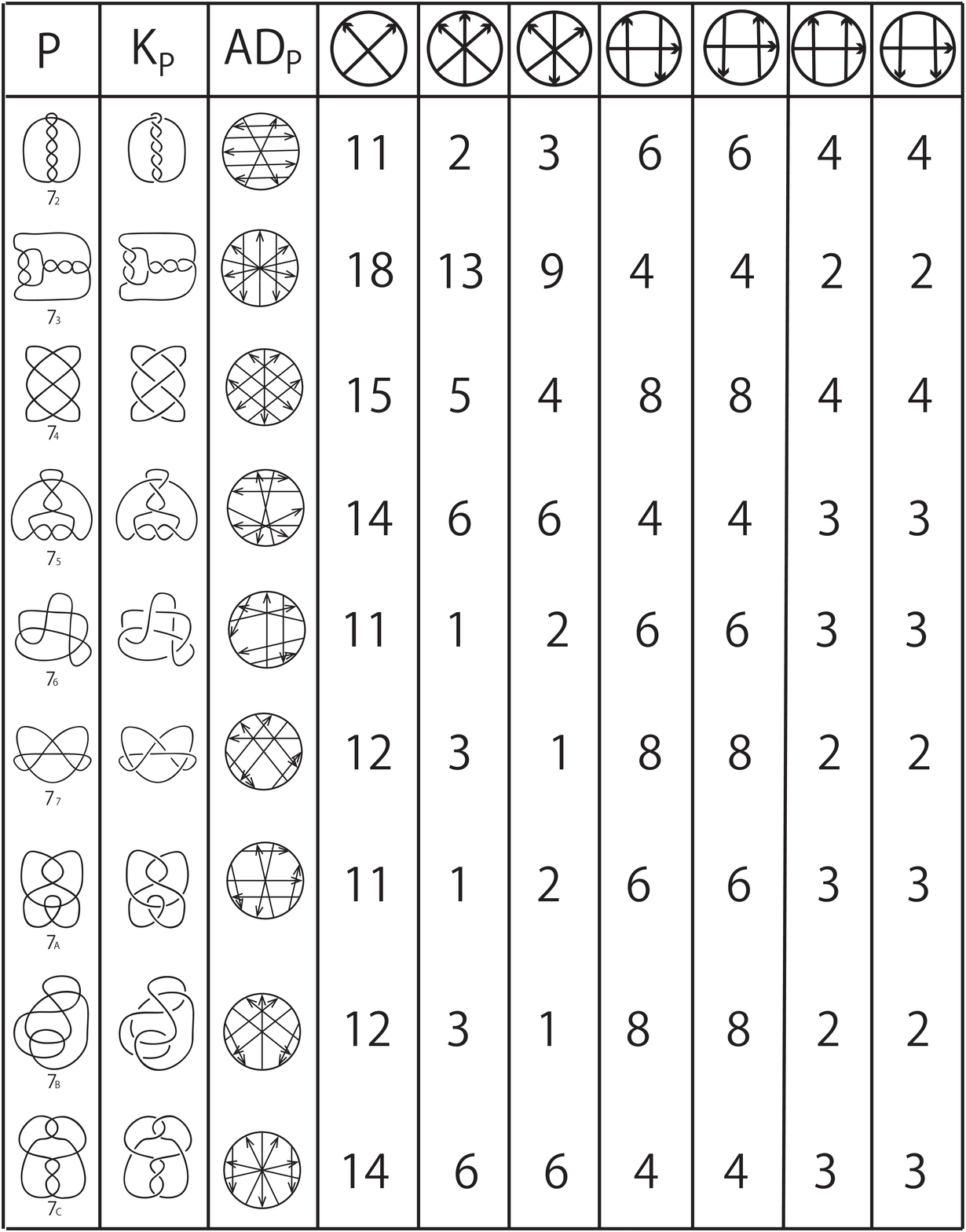}
\end{table}  
\begin{table}[h!]
\caption{Values of three invariants obtained from Example~\ref{strong23}.  Any pair of prime spherical curves in the same box in the leftmost column are related by a finite sequence generated by RI and strong~RI\!I\!I.}\label{t1}
\includegraphics[width=12cm]{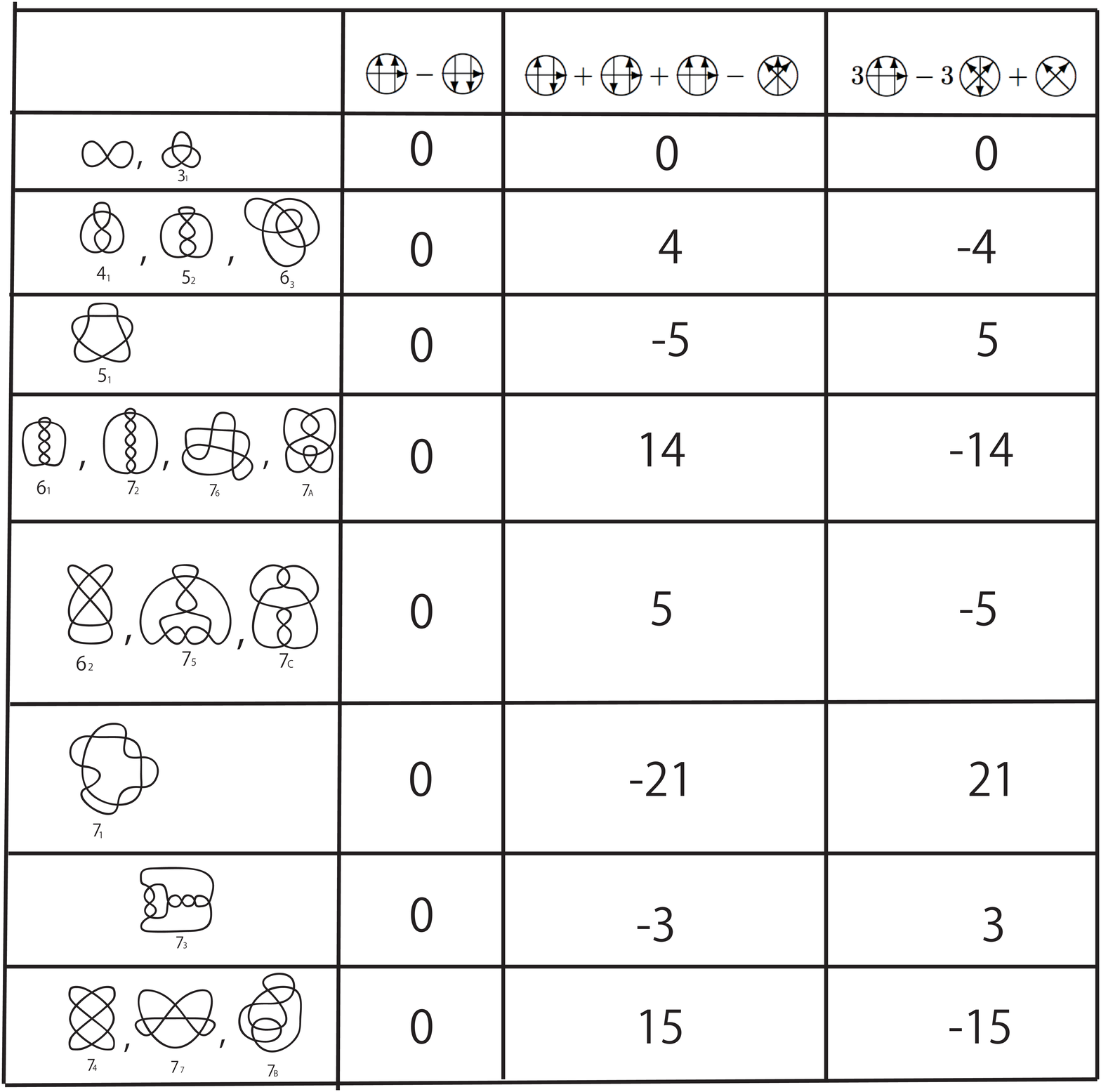}
\end{table}

\begin{table}
\caption{The ordered elements of $\check{G}_{4, 5}$ from no.$1$ to no.$300$}\label{t0a}
\includegraphics[width=12cm]{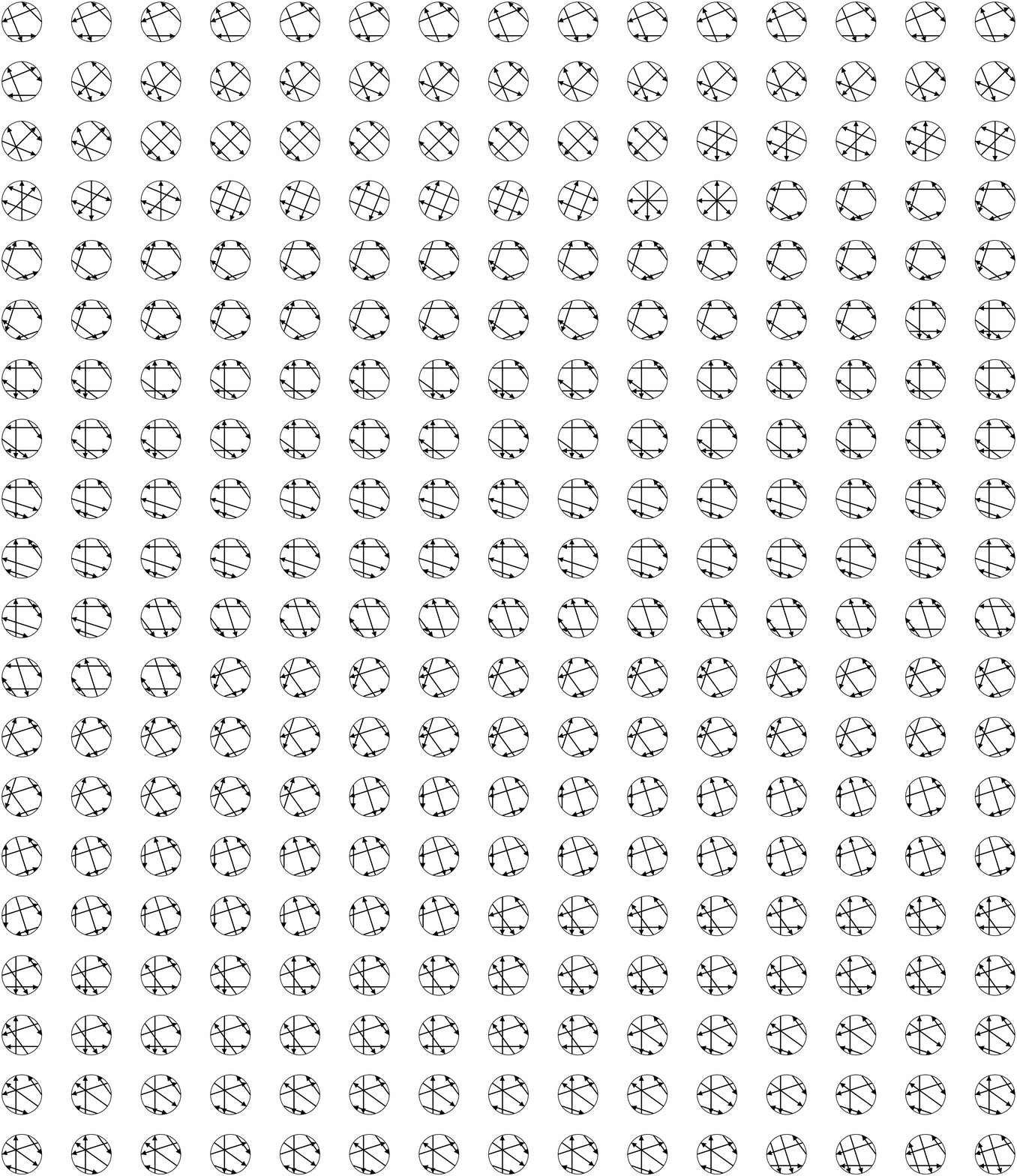}
\end{table}
\begin{table}
\caption{The ordered elements of $\check{G}_{4, 5}$ from no.$301$ to no.$600$}\label{t0b}
\includegraphics[width=12cm]{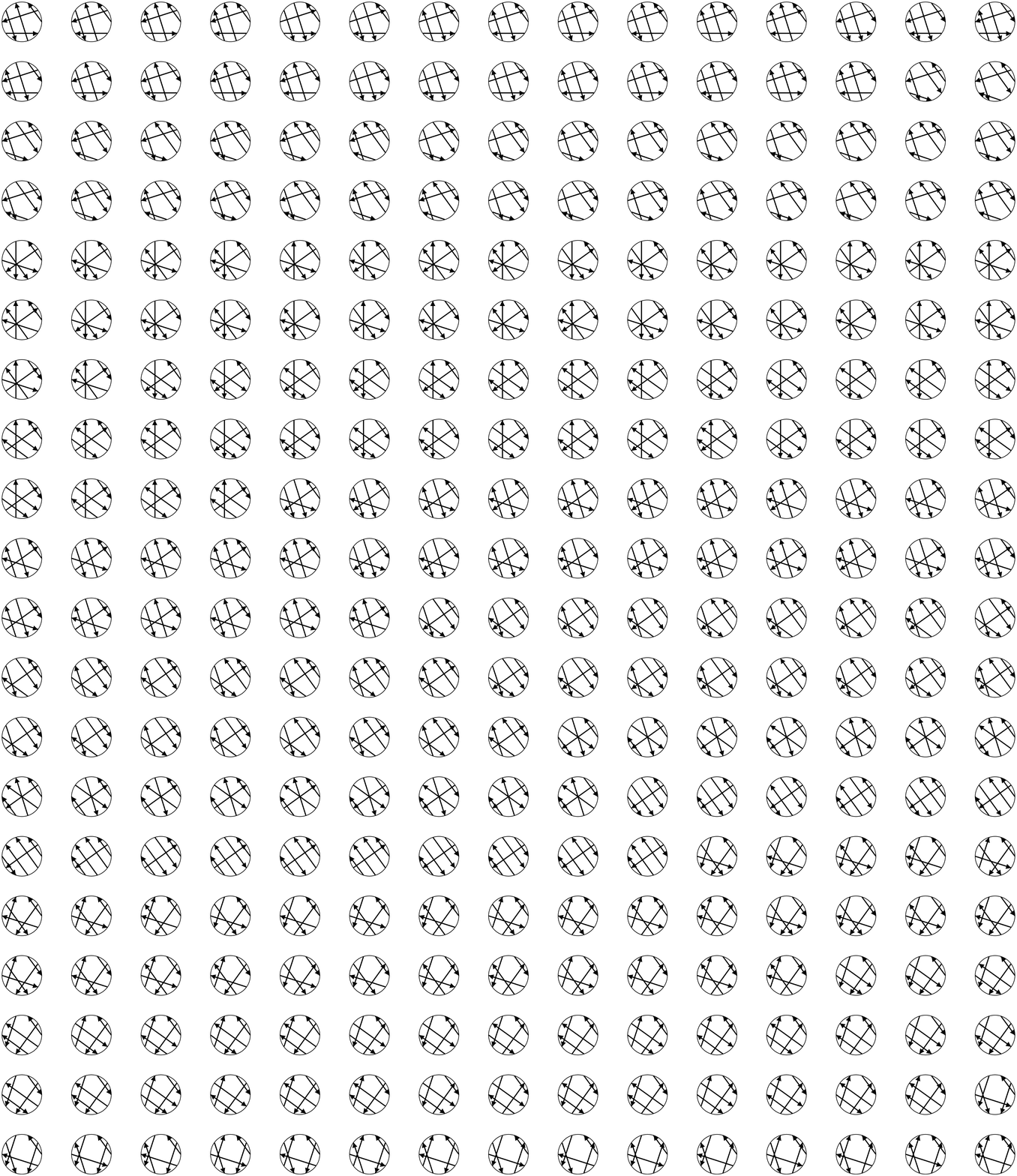}
\end{table}
\begin{table}
\caption{The ordered elements of $\check{G}_{4, 5}$ from no.$601$ to no.$852$}\label{t0c}
\includegraphics[width=12cm]{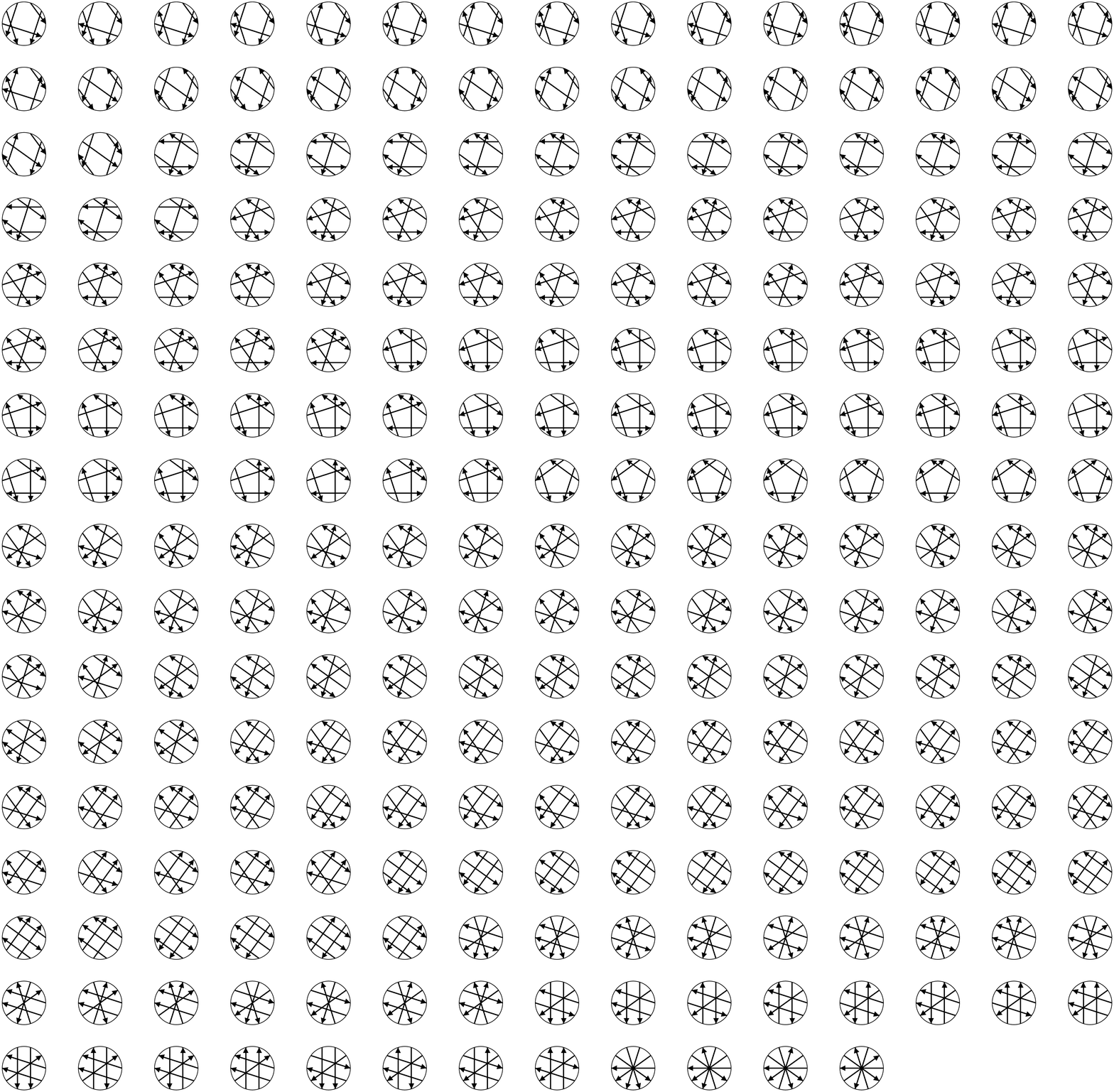}
\end{table}

\begin{table}[h!]
\caption{The first one of the $13$ tuples (corresponding to $\dim V_w^\text{ori}(4, 5)$ $=$ $13$) consisting of the coefficients in the order of $\check{G}_{4, 5} \cap \conn$ (Tables~\ref{t0a}--\ref{t0c})}\label{t0d1}
{\tiny $\begin{array}{*{15}c}
-19 &82 &-7 &44 &55 &-161 &15 &79 &-34 &15 &65 &-58 &44 &106 &-31\\
-19 &82 &44 &18 &2 &-34 &-30 &44 &58 &-55 &44 &-30 &79 &115 &18\\
44 &-31 &-17 &0 &31 &92 &0 &-16 &-17 &31 &-16 &6 &14 &64 &14\\
0 &-16 &14 &0 &-26 &0 &0 &28 &28 &32 &32 &-33 &7 &-80 &80\\
17 &-13 &52 &-30 &30 &-26 &-17 &-13 &-58 &84 &-7 &7 &7 &-7 &84\\
-58 &-13 &-17 &-26 &30 &-30 &52 &-13 &17 &80 &-80 &7 &-33 &72 &-7\\
-211 &94 &-52 &9 &125 &-30 &-99 &52 &116 &-121 &105 &8 &-144 &83 &-30\\
-31 &121 &-8 &-8 &3 &-61 &14 &83 &12 &-104 &61 &-19 &-98 &106 &-41\\
72 &-98 &-94 &94 &-52 &56 &52 &-30 &-99 &103 &91 &-121 &105 &-79 &-83\\
83 &-30 &30 &34 &-8 &-8 &-22 &-10 &14 &83 &-61 &-57 &61 &-19 &19\\
15 &-41 &4 &87 &-30 &-87 &-51 &0 &73 &0 &55 &-30 &-61 &0 &0\\
87 &-51 &4 &-29 &71 &-47 &9 &-13 &-77 &23 &63 &1 &-61 &-13 &25\\
-25 &89 &15 &-31 &-31 &15 &89 &-25 &25 &-13 &-61 &1 &63 &23 &-77\\
-13 &9 &-47 &71 &-29 &72 &-30 &-98 &30 &-92 &32 &96 &-10 &-77 &17\\
81 &-47 &87 &-45 &-61 &45 &-68 &52 &68 &-26 &66 &-32 &-96 &36 &103\\
-17 &-81 &21 &-83 &15 &83 &-41 &-97 &125 &39 &-49 &64 &-126 &-46 &90\\
67 &-93 &-31 &31 &-20 &56 &32 &-42 &71 &-81 &-57 &93 &-82 &82 &20\\
-46 &-23 &67 &-13 &-49 &64 &-74 &12 &16 &171 &-129 &-157 &119 &-186 &96\\
186 &-100 &-142 &82 &142 &-130 &193 &-129 &-135 &119 &-107 &91 &97 &-33 &96\\
-84 &-144 &84 &126 &-40 &-130 &40 &-107 &69 &97 &-55 &-97 &125 &44 &-84\\
81 &-113 &-46 &90 &67 &-93 &-26 &48 &-55 &69 &32 &-42 &71 &-81 &-44\\
58 &-65 &87 &20 &-46 &-23 &67 &0 &-32 &29 &-69 &12 &16 &70 &-28\\
-121 &53 &-92 &32 &113 &-27 &-64 &4 &81 &-47 &64 &-22 &-59 &43 &-70\\
54 &91 &-49 &66 &-32 &-109 &49 &86 &0 &-81 &21 &-60 &-8 &85 &-43\\
99 &-17 &-144 &58 &-99 &49 &52 &2 &-71 &17 &20 &-58 &99 &-13 &-52\\
58 &-55 &61 &100 &-14 &55 &-93 &-96 &42 &115 &-61 &-64 &14 &-55 &-31\\
96 &-14 &99 &-68 &-97 &58 &-99 &42 &63 &2 &-71 &32 &53 &-58 &99\\
-42 &-71 &58 &-55 &42 &71 &-14 &55 &-60 &-81 &42 &115 &-50 &-71 &14\\
-55 &16 &45 &-14 &99 &-17 &-84 &20 &-111 &39 &52 &2 &-71 &17 &48\\
-20 &67 &-47 &-52 &58 &-55 &61 &66 &-46 &93 &-65 &-96 &42 &115 &-61\\
-74 &2 &-93 &29 &96 &-14 &0 &11 &-9 &-54 &-6 &-1 &-33 &92 &-6\\
-57 &-37 &48 &-6 &21 &53 &-16 &-16 &53 &21 &-6 &48 &-37 &-57 &-6\\
92 &-33 &-1 &-6 &-54 &-9 &11 &0 &0 &42 &-4 &-34 &-34 &-56 &-6\\
92 &-54 &46 &68 &-16 &0 &-4 &-34 &-6 &0 &42 &-14 &-54 &-6 &-54\\
-6 &92 &-54 &48 &48 &-16 &0 &-6 &-14 &-6 &-43 &90 &31 &-30 &23\\
-60 &-37 &26 &61 &-94 &-71 &64 &-39 &38 &67 &-26 &87 &-46 &-75 &74\\
-49 &42 &19 &-52 &-87 &76 &53 &-90 &83 &-82 &-23 &70 &-43 &36 &55\\
-30 &23 &-20 &-47 &26 &61 &-66 &-77 &64 &-39 &32 &51 &-26 &87 &-62\\
-81 &74 &-49 &36 &47 &-52 &-87 &66 &93 &-90 &83 &-58 &-77 &70 &113\\
-59 &-206 &130 &-50 &62 &91 &-81 &-92 &64 &63 &-101 &47 &3 &-70 &86\\
-27 &43 &116 &-66 &12 &-50 &-49 &21 &32 &-22 &-51 &63 &17 &-93 &54\\
0 &59 &-55 &-23 &49 &0 &-30 &0 &4 &-30 &0 &23 &-37 &59 &-23\\
4 &0 &-6 &-1 &-33 &92 &21 &53 &-16 &8 &-152 &-33 &-8 &53 &8\\
-1 &21 &-6 &-14 &-76 &21 &117 &-54 &52 &9 &-47 &27 &77 &-54 &-106\\
3 &-75 &38 &82 &82 &38 &-75 &3 &-106 &-54 &77 &27 &-47 &9 &52\\
-54 &117 &21 &-76 &-14 &0 &0 &52 &4 &-79 &35 &-21 &25 &19 &13\\
-43 &-61 &0 &-4 &56 &4 &4 &56 &-4 &0 &-61 &-43 &13 &19 &25\\
-21 &35 &-79 &4 &52 &0 &0 &13 &17 &9 &17 &9 &13 &-195 &-195\\
6 &0 &32 &6 &-12 &24 &-24 &64 &-6 &-34 &-82 &-18 &0 &-10 &54\\
0 &0 &54 &-10 &0 &-18 &-82 &-34 &-6 &64 &-24 &24 &-12 &6 &32\\
0 &6 &6 &14 &0 &6 &-12 &-20 &-6 &64 &-42 &-18 &20 &0 &6\\
0 &-12 &-6 &6 &0 &32 &6 &0 &4 &-44 &-12 &0 &12 &-36 &-12\\
0 &-10 &54 &0 &0 &54 &-10 &0 &-12 &-36 &12 &0 &-12 &-44 &4\\
0 &6 &32 &0 &6 &6 &14 &0 &6 &0 &-14 &-12 &0 &-14 &-12\\
20 &0 &6 &0 &0 &0 &-28 &32 &0 &32 &-36 &0 &0 &32 &0\\
0 &0 &32 &-28 &0 &-36 &0 &-28 &32 &8 &-28 &0 &-28 &0 &32\\
0 &0 &0 &32 &-28 &0 &8 &0 &32 &0 &0 &-160 &\end{array}$
}
\end{table}

\begin{table}[h!]
\caption{The second one of the $13$ tuples (corresponding to $\dim V_w^\text{ori}(4, 5)$ $=$ $13$) consisting of the coefficients in the order of $\check{G}_{4, 5} \cap \conn$ (Tables~\ref{t0a}--\ref{t0c})}\label{t0d2}
{\tiny
$\begin{array}{*{15}c}
-33 &46 &19 &36 &53 &-195 &5 &69 &-14 &5 &-13 &-38 &36 &110 &-37\\
-33 &46 &36 &-18 &-26 &-14 &-90 &36 &38 &-53 &36 &-82 &69 &65 &-26\\
36 &-37 &-27 &-40 &61 &52 &-40 &-48 &-19 &53 &-32 &-30 &34 &40 &34\\
0 &-32 &34 &0 &-46 &0 &0 &148 &148 &16 &0 &5 &-3 &-32 &32\\
35 &-39 &12 &-10 &2 &-6 &45 &-39 &-38 &36 &3 &-3 &-3 &3 &28\\
-30 &-39 &45 &-14 &10 &-2 &4 &-39 &35 &40 &-40 &-3 &5 &72 &-45\\
-97 &74 &-28 &-21 &71 &-26 &-137 &100 &180 &-139 &83 &-48 &-120 &81 &-10\\
-21 &35 &-8 &-40 &89 &-7 &-38 &73 &-36 &-112 &71 &-17 &-14 &54 &-19\\
72 &-70 &-74 &74 &-28 &24 &28 &-26 &-137 &133 &145 &-139 &83 &-85 &-81\\
81 &-10 &10 &6 &-8 &-40 &46 &34 &-38 &73 &-71 &-75 &71 &-17 &17\\
21 &-19 &-12 &37 &14 &-37 &-33 &8 &35 &0 &29 &6 &-39 &0 &-8\\
37 &-33 &-4 &1 &61 &-45 &-21 &41 &-103 &21 &45 &11 &-79 &41 &27\\
-51 &115 &5 &-69 &-69 &5 &107 &-43 &27 &41 &-87 &19 &53 &13 &-103\\
41 &-13 &-53 &61 &1 &80 &-18 &-78 &18 &-76 &8 &72 &-6 &-95 &27\\
91 &-21 &77 &-15 &-79 &15 &-84 &20 &84 &-22 &78 &-8 &-72 &4 &93\\
-27 &-91 &23 &-81 &21 &81 &-19 &-99 &103 &37 &-59 &80 &-98 &-26 &62\\
41 &-55 &3 &5 &-36 &56 &-8 &-6 &101 &-99 &-43 &63 &-86 &94 &28\\
-58 &-37 &57 &-7 &-11 &40 &-62 &12 &8 &153 &-91 &-151 &85 &-166 &104\\
150 &-84 &-170 &102 &170 &-102 &171 &-107 &-165 &101 &-89 &25 &83 &-19 &104\\
-36 &-88 &20 &106 &-40 &-102 &40 &-105 &39 &99 &-37 &-99 &103 &36 &-52\\
83 &-107 &-26 &62 &41 &-55 &18 &-8 &-45 &63 &-8 &-6 &101 &-99 &-36\\
54 &-99 &109 &28 &-58 &-37 &57 &-16 &-8 &47 &-63 &12 &8 &74 &-12\\
-91 &31 &-76 &8 &99 &-33 &-64 &-4 &83 &-13 &64 &-2 &-81 &17 &-74\\
10 &97 &-35 &78 &-8 &-103 &35 &66 &0 &-83 &15 &-68 &8 &79 &-17\\
57 &-35 &-64 &46 &-73 &19 &76 &-26 &-61 &35 &68 &-46 &57 &-31 &-60\\
38 &-53 &39 &60 &-42 &69 &-23 &-80 &30 &65 &-39 &-72 &42 &-53 &27\\
64 &-34 &57 &-20 &-75 &46 &-73 &14 &77 &-26 &-61 &16 &87 &-46 &57\\
-14 &-77 &38 &-53 &22 &77 &-42 &69 &-20 &-83 &30 &65 &-22 &-93 &42\\
-53 &16 &79 &-34 &57 &-35 &-60 &20 &-77 &45 &76 &-26 &-61 &35 &56\\
-36 &73 &-45 &-60 &38 &-53 &39 &54 &-18 &71 &-43 &-80 &30 &65 &-39\\
-54 &30 &-71 &39 &64 &-34 &-16 &25 &61 &-66 &-2 &-11 &-43 &52 &14\\
-11 &-63 &64 &-2 &-1 &47 &-48 &-48 &39 &7 &-2 &64 &-55 &-19 &14\\
52 &-51 &-3 &-2 &-66 &69 &17 &-16 &-16 &78 &-12 &-54 &10 &-72 &14\\
52 &-82 &58 &76 &-48 &-16 &-12 &10 &14 &-16 &78 &6 &-66 &-2 &-66\\
14 &52 &-82 &64 &64 &-48 &-16 &-2 &6 &14 &-81 &70 &53 &-42 &21\\
-20 &-15 &14 &95 &-82 &-85 &64 &-45 &34 &33 &-14 &85 &-66 &-57 &46\\
-19 &14 &9 &-12 &-93 &76 &79 &-62 &49 &-38 &-29 &18 &-81 &52 &77\\
-42 &21 &-12 &-29 &14 &95 &-70 &-95 &64 &-45 &24 &41 &-14 &85 &-50\\
-75 &46 &-19 &-4 &29 &-12 &-93 &70 &79 &-62 &49 &-22 &-39 &18 &99\\
-81 &-90 &94 &-102 &66 &97 &-83 &-76 &64 &85 &-71 &69 &-63 &-74 &66\\
-33 &17 &28 &-30 &36 &2 &-35 &15 &8 &-2 &-17 &5 &-5 &1 &10\\
0 &9 &-13 &-5 &3 &-8 &14 &8 &-4 &6 &0 &5 &-7 &9 &-5\\
-12 &0 &-2 &-11 &-43 &52 &-1 &47 &-48 &0 &64 &-51 &-32 &39 &-16\\
-3 &7 &-2 &-10 &-52 &-1 &63 &14 &60 &-5 &-45 &-15 &55 &-2 &-62\\
9 &-81 &2 &70 &70 &2 &-81 &9 &-62 &-2 &55 &-15 &-45 &-5 &60\\
14 &63 &-1 &-52 &-10 &0 &0 &-20 &12 &19 &-7 &1 &11 &-7 &-9\\
-1 &9 &-16 &4 &8 &-4 &-4 &8 &4 &-16 &9 &-1 &-9 &-7 &11\\
1 &-7 &19 &12 &-20 &0 &0 &-9 &3 &-5 &3 &-5 &-9 &55 &55\\
-30 &8 &56 &-30 &-4 &8 &-48 &40 &-2 &18 &-54 &34 &16 &-46 &34\\
0 &0 &42 &-38 &0 &34 &-46 &10 &-2 &40 &-56 &16 &-4 &-30 &48\\
0 &-14 &-30 &42 &16 &-30 &-4 &-36 &-2 &40 &-22 &34 &12 &0 &-14\\
0 &-4 &-2 &-30 &8 &56 &-30 &0 &20 &-52 &36 &0 &4 &-52 &36\\
16 &-46 &34 &0 &0 &42 &-38 &0 &36 &-60 &12 &0 &36 &-44 &12\\
0 &-30 &48 &0 &-14 &-30 &42 &16 &-30 &0 &-34 &36 &0 &-34 &36\\
12 &0 &-14 &0 &0 &0 &4 &16 &0 &16 &-4 &0 &-16 &16 &-16\\
0 &-16 &16 &4 &-16 &-4 &0 &4 &16 &0 &4 &0 &4 &-16 &16\\
-16 &-16 &0 &16 &4 &0 &-16 &0 &16 &0 &-16 &0 &\end{array}$
}
\end{table}
\begin{table}[h!]
\caption{The third one of the $13$ tuples (corresponding to $\dim V_w^\text{ori}(4, 5)$ $=$ $13$) consisting of the coefficients in the order of $\check{G}_{4, 5} \cap \conn$ (Tables~\ref{t0a}--\ref{t0c})}\label{t0d3}
{\tiny
$\begin{array}{*{15}c}
-43 &50 &33 &44 &63 &-217 &7 &71 &-2 &7 &-39 &-26 &44 &122 &-23\\
-43 &50 &44 &-14 &-14 &-2 &-110 &44 &58 &-31 &44 &-94 &71 &75 &-30\\
44 &-23 &-25 &-48 &71 &60 &-48 &-16 &-9 &55 &-48 &-42 &46 &80 &46\\
0 &-48 &46 &0 &-74 &0 &0 &188 &188 &32 &0 &-9 &-1 &-48 &48\\
41 &-37 &20 &-14 &-2 &6 &23 &-37 &-58 &68 &1 &-1 &-1 &1 &52\\
-42 &-37 &23 &-10 &14 &2 &4 &-37 &41 &64 &-64 &-1 &-9 &72 &-31\\
-139 &78 &-20 &-31 &85 &-14 &-139 &100 &164 &-145 &81 &-8 &-128 &75 &-14\\
-23 &65 &-8 &-40 &75 &-21 &-34 &91 &-36 &-120 &85 &-11 &-66 &74 &-17\\
72 &-82 &-78 &78 &-20 &24 &20 &-14 &-139 &143 &131 &-145 &81 &-71 &-75\\
75 &-14 &14 &18 &-8 &-40 &26 &38 &-34 &91 &-85 &-81 &85 &-11 &11\\
7 &-17 &-12 &63 &2 &-63 &-43 &16 &49 &0 &47 &-14 &-53 &0 &-16\\
63 &-43 &4 &-5 &63 &-55 &1 &27 &-101 &31 &39 &9 &-69 &27 &17\\
-65 &129 &7 &-55 &-55 &7 &113 &-49 &17 &27 &-85 &25 &55 &15 &-101\\
27 &17 &-71 &63 &-5 &88 &-30 &-98 &30 &-76 &16 &80 &-10 &-85 &25\\
89 &-39 &79 &-21 &-69 &21 &-84 &36 &84 &-26 &66 &-16 &-80 &20 &95\\
-25 &-89 &29 &-75 &7 &75 &-17 &-121 &117 &47 &-57 &96 &-126 &-30 &74\\
43 &-53 &25 &-9 &-36 &56 &-16 &-10 &111 &-105 &-49 &69 &-98 &114 &20\\
-62 &-31 &43 &-37 &7 &48 &-58 &28 &0 &163 &-105 &-149 &95 &-186 &112\\
154 &-84 &-158 &98 &158 &-114 &169 &-105 &-143 &95 &-99 &51 &89 &-25 &112\\
-68 &-96 &36 &110 &-40 &-114 &40 &-99 &45 &89 &-31 &-121 &117 &44 &-68\\
105 &-121 &-30 &74 &43 &-53 &22 &0 &-47 &61 &-16 &-10 &111 &-105 &-44\\
58 &-89 &111 &20 &-62 &-31 &43 &-32 &16 &37 &-61 &28 &0 &70 &-12\\
-113 &45 &-76 &16 &105 &-35 &-64 &4 &73 &-23 &64 &-6 &-67 &19 &-70\\
22 &99 &-41 &66 &-16 &-101 &41 &70 &0 &-73 &13 &-60 &-8 &77 &-19\\
43 &-9 &-80 &42 &-75 &9 &84 &-14 &-63 &41 &68 &-74 &75 &-21 &-84\\
58 &-31 &21 &68 &-30 &63 &-21 &-96 &26 &75 &-53 &-80 &62 &-63 &9\\
96 &-46 &43 &-20 &-73 &42 &-75 &26 &71 &-14 &-63 &32 &93 &-74 &75\\
-26 &-95 &58 &-31 &10 &63 &-30 &63 &-28 &-73 &26 &75 &-34 &-95 &62\\
-63 &16 &85 &-46 &43 &-9 &-52 &4 &-87 &31 &84 &-14 &-63 &41 &64\\
-36 &75 &-55 &-84 &58 &-31 &21 &50 &-14 &85 &-41 &-96 &26 &75 &-53\\
-74 &34 &-85 &53 &96 &-46 &-32 &35 &47 &-70 &10 &-9 &-41 &60 &10\\
-17 &-61 &48 &10 &-19 &45 &-16 &-16 &29 &-3 &10 &48 &-45 &-33 &10\\
60 &-57 &7 &10 &-70 &63 &19 &-32 &-32 &90 &-4 &-50 &14 &-88 &10\\
60 &-70 &30 &68 &-16 &-32 &-4 &14 &10 &-32 &90 &2 &-70 &10 &-70\\
10 &60 &-70 &48 &48 &-16 &-32 &10 &2 &10 &-83 &90 &55 &-46 &-1\\
-28 &3 &10 &101 &-94 &-79 &64 &-31 &22 &27 &-10 &95 &-78 &-67 &58\\
-9 &26 &-5 &-20 &-111 &92 &77 &-74 &43 &-34 &-15 &22 &-83 &52 &79\\
-46 &-1 &12 &-23 &10 &101 &-82 &-85 &64 &-31 &16 &27 &-10 &95 &-62\\
-89 &58 &-9 &4 &23 &-20 &-111 &82 &101 &-74 &43 &-26 &-37 &22 &105\\
-67 &-126 &98 &-98 &62 &99 &-73 &-76 &64 &55 &-77 &71 &-37 &-70 &70\\
-35 &19 &52 &-34 &44 &-18 &-41 &13 &16 &-6 &-11 &23 &-7 &-37 &22\\
0 &19 &-15 &1 &9 &-16 &2 &16 &4 &-14 &0 &-1 &-13 &19 &1\\
-12 &0 &10 &-9 &-41 &60 &-19 &45 &-16 &24 &-8 &-57 &-24 &29 &-8\\
7 &-3 &10 &2 &-76 &13 &77 &-6 &52 &-31 &-23 &-29 &101 &-6 &-90\\
11 &-83 &22 &66 &66 &22 &-83 &11 &-90 &-6 &101 &-29 &-23 &-31 &52\\
-6 &77 &13 &-76 &2 &0 &0 &20 &4 &-23 &11 &-13 &17 &-5 &5\\
-19 &-21 &0 &-4 &24 &4 &4 &24 &-4 &0 &-21 &-19 &5 &-5 &17\\
-13 &11 &-23 &4 &20 &0 &0 &5 &9 &1 &9 &1 &5 &-75 &-75\\
-42 &16 &80 &-42 &20 &-8 &-72 &80 &-22 &30 &-66 &14 &32 &-58 &38\\
0 &0 &54 &-42 &0 &14 &-50 &14 &-22 &80 &-88 &8 &20 &-42 &64\\
0 &-10 &-42 &62 &32 &-42 &20 &-68 &-22 &80 &-42 &14 &20 &0 &-10\\
0 &20 &-22 &-42 &16 &80 &-42 &0 &20 &-76 &36 &0 &12 &-52 &36\\
32 &-58 &38 &0 &0 &54 &-42 &0 &36 &-68 &28 &0 &36 &-60 &4\\
0 &-42 &64 &0 &-10 &-42 &62 &32 &-42 &0 &-46 &36 &0 &-46 &36\\
20 &0 &-10 &0 &0 &0 &-12 &32 &0 &32 &12 &-32 &-32 &32 &0\\
-32 &0 &32 &-12 &-32 &12 &0 &-12 &32 &24 &-12 &0 &-12 &-32 &32\\
0 &-32 &0 &32 &-12 &0 &-8 &0 &32 &-32 &0 &0 &\end{array}$
}
\end{table}
\begin{table}[h!]
\caption{The fourth one of the $13$ tuples (corresponding to $\dim V_w^\text{ori}(4, 5)$ $=$ $13$) consisting of the coefficients in the order of $\check{G}_{4, 5} \cap \conn$ (Tables~\ref{t0a}--\ref{t0c})}\label{t0d4}
{\tiny
$\begin{array}{*{15}c}
1 &0 &-1 &-1 &-1 &4 &0 &-1 &-1 &0 &2 &0 &-1 &-2 &0\\
1 &0 &-1 &0 &1 &-1 &3 &-1 &-1 &0 &-1 &2 &-1 &0 &1\\
-1 &0 &1 &1 &-2 &0 &1 &0 &0 &-1 &1 &2 &-1 &-1 &-1\\
0 &1 &-1 &0 &2 &0 &0 &-4 &-4 &0 &0 &0 &0 &0 &0\\
-1 &1 &0 &0 &1 &-1 &-1 &1 &1 &-1 &0 &0 &0 &0 &0\\
0 &1 &-1 &0 &0 &-1 &1 &1 &-1 &-1 &1 &0 &0 &-1 &1\\
1 &-1 &0 &1 &-1 &0 &3 &-3 &-3 &3 &-1 &0 &2 &-1 &0\\
0 &0 &0 &1 &-2 &0 &1 &-2 &2 &2 &-2 &0 &1 &-1 &0\\
-1 &1 &1 &-1 &0 &0 &0 &0 &3 &-3 &-3 &3 &-1 &1 &1\\
-1 &0 &0 &0 &0 &1 &-1 &-1 &1 &-2 &2 &2 &-2 &0 &0\\
0 &0 &1 &-1 &-1 &1 &0 &-1 &0 &0 &0 &0 &1 &0 &1\\
-1 &0 &0 &0 &-1 &1 &0 &-1 &2 &-1 &0 &0 &1 &-1 &0\\
2 &-3 &0 &1 &1 &0 &-2 &1 &0 &-1 &2 &-1 &-1 &0 &2\\
-1 &-1 &2 &-1 &0 &-2 &1 &2 &-1 &1 &0 &-1 &0 &2 &-1\\
-2 &1 &-1 &0 &1 &0 &2 &-1 &-2 &1 &-1 &0 &1 &0 &-2\\
1 &2 &-1 &1 &0 &-1 &0 &3 &-2 &-1 &1 &-2 &2 &0 &-1\\
-1 &0 &-1 &1 &1 &-1 &1 &0 &-3 &2 &1 &-1 &2 &-2 &0\\
1 &1 &0 &1 &-1 &-1 &1 &-1 &0 &-3 &2 &2 &-1 &4 &-3\\
-2 &1 &3 &-2 &-3 &2 &-3 &2 &2 &-1 &2 &-1 &-1 &0 &-3\\
2 &1 &0 &-2 &1 &2 &-1 &2 &-1 &-1 &0 &3 &-2 &-1 &1\\
-2 &2 &0 &-1 &-1 &0 &-1 &1 &1 &-1 &1 &0 &-3 &2 &1\\
-1 &2 &-2 &0 &1 &1 &0 &1 &-1 &-1 &1 &-1 &0 &-1 &0\\
2 &-1 &1 &0 &-2 &1 &1 &0 &-1 &0 &-1 &0 &1 &0 &1\\
0 &-2 &1 &-1 &0 &2 &-1 &-1 &0 &1 &0 &1 &0 &-1 &0\\
0 &0 &1 &-1 &2 &0 &-3 &1 &0 &-1 &-1 &2 &-1 &0 &2\\
-1 &0 &0 &-1 &1 &-2 &0 &3 &-1 &0 &1 &1 &-2 &1 &0\\
-2 &1 &0 &0 &1 &-1 &2 &-1 &-2 &1 &0 &0 &-2 &2 &-1\\
0 &2 &-1 &0 &0 &-1 &1 &-2 &1 &2 &-1 &0 &0 &2 &-2\\
1 &0 &-2 &1 &0 &0 &1 &0 &2 &-1 &-3 &1 &0 &-1 &-1\\
1 &-1 &1 &2 &-1 &0 &0 &-1 &0 &-2 &1 &3 &-1 &0 &1\\
1 &-1 &1 &-1 &-2 &1 &1 &-1 &-1 &1 &0 &0 &0 &0 &-1\\
0 &2 &-1 &0 &1 &-1 &0 &0 &0 &0 &0 &-1 &1 &1 &-1\\
0 &1 &-1 &0 &1 &-2 &0 &1 &1 &-2 &1 &0 &-1 &2 &-1\\
0 &2 &0 &-2 &0 &1 &1 &-1 &-1 &1 &-2 &0 &1 &0 &1\\
-1 &0 &2 &-1 &-1 &0 &1 &0 &0 &-1 &2 &-2 &-1 &1 &1\\
0 &-1 &0 &-3 &2 &2 &-1 &0 &0 &0 &0 &-2 &2 &1 &-1\\
-1 &0 &1 &0 &3 &-2 &-2 &1 &0 &0 &0 &0 &2 &-1 &-2\\
1 &1 &-1 &0 &0 &-3 &2 &2 &-1 &0 &0 &0 &0 &-2 &1\\
2 &-1 &-1 &1 &0 &0 &3 &-2 &-2 &1 &0 &0 &0 &0 &-2\\
1 &1 &-1 &3 &-1 &-2 &1 &1 &-1 &0 &1 &-2 &1 &1 &-1\\
1 &0 &0 &0 &-2 &0 &1 &0 &0 &0 &-1 &0 &1 &0 &0\\
0 &0 &0 &-1 &1 &1 &-1 &-1 &0 &0 &0 &1 &0 &0 &-1\\
1 &0 &0 &0 &0 &0 &1 &-1 &0 &-1 &-1 &1 &1 &0 &1\\
-1 &0 &0 &0 &1 &-1 &0 &0 &-1 &1 &0 &1 &-2 &0 &1\\
-1 &2 &0 &-1 &-1 &0 &2 &-1 &1 &0 &-2 &1 &0 &1 &-1\\
0 &0 &-1 &1 &0 &0 &0 &0 &0 &0 &0 &0 &0 &0 &0\\
0 &0 &0 &0 &0 &0 &0 &0 &0 &0 &0 &0 &0 &0 &0\\
0 &0 &0 &0 &0 &0 &0 &0 &0 &0 &0 &0 &0 &0 &0\\
2 &-1 &-3 &2 &0 &0 &1 &-1 &0 &-1 &2 &-1 &-2 &2 &0\\
0 &0 &-1 &1 &0 &-1 &1 &0 &0 &-1 &2 &-1 &0 &2 &-2\\
0 &0 &2 &-2 &-2 &2 &0 &1 &0 &-1 &1 &-1 &0 &0 &0\\
0 &0 &0 &2 &-1 &-3 &2 &0 &-1 &2 &-1 &0 &0 &1 &-1\\
-2 &2 &0 &0 &0 &-1 &1 &0 &-1 &2 &-1 &0 &-1 &1 &0\\
0 &2 &-2 &0 &0 &2 &-2 &-2 &2 &0 &1 &-1 &0 &1 &-1\\
0 &0 &0 &0 &0 &0 &0 &0 &-1 &0 &0 &0 &1 &0 &0\\
0 &0 &0 &0 &1 &0 &-1 &0 &0 &-1 &0 &0 &0 &1 &0\\
0 &1 &-1 &0 &0 &0 &1 &-1 &0 &0 &0 &0 &\end{array}$
}
\end{table}
\begin{table}[h!]
\caption{The fifth one of the $13$ tuples (corresponding to $\dim V_w^\text{ori}(4, 5)$ $=$ $13$) consisting of the coefficients in the order of $\check{G}_{4, 5} \cap \conn$ (Tables~\ref{t0a}--\ref{t0c})}\label{t0d5}
{\tiny
$\begin{array}{*{15}c}
-3 &-6 &5 &0 &-5 &11 &-1 &-9 &6 &-1 &-19 &10 &0 &-10 &9\\
-3 &-6 &0 &-2 &2 &6 &-2 &0 &-6 &9 &0 &-2 &-9 &-13 &-2\\
0 &9 &7 &0 &-1 &-12 &0 &8 &7 &-1 &0 &-2 &6 &0 &6\\
0 &0 &6 &0 &-2 &0 &0 &12 &12 &0 &0 &-1 &-1 &0 &0\\
1 &3 &-4 &2 &-2 &6 &-9 &3 &-2 &4 &1 &-1 &-1 &1 &4\\
-2 &3 &-9 &6 &-2 &2 &-4 &3 &1 &0 &0 &-1 &-1 &-8 &9\\
5 &-10 &12 &-7 &-11 &10 &13 &-12 &-20 &15 &-15 &16 &16 &-13 &2\\
1 &1 &0 &0 &-5 &3 &-2 &-5 &4 &8 &-3 &5 &-10 &-6 &7\\
-8 &6 &10 &-10 &12 &-8 &-12 &10 &13 &-9 &-21 &15 &-15 &17 &13\\
-13 &2 &-2 &2 &0 &0 &-6 &6 &-2 &-5 &3 &7 &-3 &5 &-5\\
-9 &7 &4 &-1 &-6 &1 &-3 &0 &1 &0 &7 &-6 &3 &0 &0\\
-1 &-3 &4 &3 &-9 &1 &9 &-5 &11 &-1 &-9 &1 &11 &-5 &-7\\
-1 &-7 &-1 &9 &9 &-1 &-7 &-1 &-7 &-5 &11 &1 &-9 &-1 &11\\
-5 &9 &1 &-9 &3 &-8 &2 &6 &-2 &12 &0 &-8 &-2 &19 &-7\\
-15 &1 &-9 &3 &11 &-3 &12 &-4 &-12 &6 &-14 &0 &8 &4 &-17\\
7 &15 &-3 &13 &-9 &-13 &7 &7 &-11 &-1 &7 &0 &2 &2 &-6\\
-13 &11 &17 &-9 &4 &-8 &-8 &6 &-9 &7 &7 &-11 &6 &2 &-4\\
2 &9 &-13 &-13 &15 &-8 &14 &4 &-8 &-21 &15 &19 &-9 &22 &-16\\
-22 &12 &26 &-14 &-26 &14 &-31 &23 &25 &-17 &13 &-5 &-7 &-1 &-16\\
4 &16 &-4 &-18 &8 &14 &-8 &21 &-11 &-15 &9 &7 &-11 &-4 &4\\
1 &7 &2 &-6 &-13 &11 &6 &0 &9 &-11 &-8 &6 &-9 &7 &4\\
-6 &15 &-9 &-4 &2 &9 &-13 &-8 &16 &-11 &11 &4 &-8 &-10 &4\\
7 &-3 &12 &0 &-15 &5 &8 &4 &-15 &1 &-8 &2 &13 &-5 &10\\
-2 &-13 &7 &-14 &0 &19 &-7 &-10 &0 &15 &-3 &12 &-8 &-11 &5\\
-13 &7 &8 &-6 &13 &-7 &-4 &2 &9 &-7 &-4 &6 &-13 &11 &4\\
-6 &9 &-11 &-4 &2 &-9 &11 &8 &-6 &-13 &11 &8 &-2 &9 &-7\\
-8 &2 &-13 &4 &7 &-6 &13 &2 &-9 &2 &9 &0 &-11 &6 &-13\\
6 &9 &-6 &9 &-6 &-9 &2 &-9 &4 &15 &-6 &-13 &6 &17 &-2\\
9 &-8 &-11 &2 &-13 &7 &12 &-4 &9 &-9 &-4 &2 &9 &-7 &-8\\
12 &-13 &9 &4 &-6 &9 &-11 &-6 &2 &-3 &7 &8 &-6 &-13 &11\\
6 &-6 &11 &-3 &-8 &2 &0 &-5 &-9 &10 &2 &7 &7 &-12 &2\\
-1 &3 &-8 &2 &-3 &-3 &8 &8 &-3 &-3 &2 &-8 &3 &-1 &2\\
-12 &7 &7 &2 &10 &-9 &-5 &0 &0 &-6 &4 &6 &-2 &8 &2\\
-12 &10 &-10 &-12 &8 &0 &4 &-2 &2 &0 &-6 &-6 &10 &2 &10\\
2 &-12 &10 &-8 &-8 &8 &0 &2 &-6 &2 &5 &-6 &-1 &2 &-9\\
4 &11 &-6 &-11 &10 &17 &-8 &9 &-10 &-13 &6 &-9 &2 &5 &-6\\
7 &2 &-5 &4 &9 &-4 &-11 &6 &-13 &14 &9 &-10 &5 &-4 &-9\\
2 &-9 &12 &9 &-6 &-11 &6 &19 &-8 &9 &-8 &-13 &6 &-9 &2\\
7 &-6 &7 &4 &-9 &4 &9 &-6 &-3 &6 &-13 &6 &11 &-10 &-15\\
13 &10 &-14 &14 &-10 &-13 &15 &12 &-8 &-17 &11 &-9 &11 &10 &-10\\
5 &-5 &-4 &6 &-4 &-2 &7 &-3 &0 &2 &5 &-1 &1 &-5 &-2\\
0 &-5 &9 &1 &1 &0 &-6 &0 &4 &-6 &0 &-1 &3 &-5 &1\\
4 &0 &2 &7 &7 &-12 &-3 &-3 &8 &0 &-16 &7 &0 &-3 &0\\
7 &-3 &2 &2 &4 &5 &-11 &-6 &-4 &-7 &9 &-5 &5 &2 &6\\
3 &5 &-2 &-6 &-6 &-2 &5 &3 &6 &2 &5 &-5 &9 &-7 &-4\\
-6 &-11 &5 &4 &2 &0 &0 &12 &-4 &-7 &3 &-5 &1 &-5 &5\\
-3 &-5 &8 &-4 &0 &4 &4 &0 &-4 &8 &-5 &-3 &5 &-5 &1\\
-5 &3 &-7 &-4 &12 &0 &0 &5 &1 &1 &1 &1 &5 &-35 &-35\\
-2 &0 &0 &-2 &4 &0 &0 &0 &2 &-2 &-2 &6 &0 &-2 &-2\\
0 &0 &-2 &-2 &0 &6 &-2 &-2 &2 &0 &0 &0 &4 &-2 &0\\
0 &-2 &-2 &6 &0 &-2 &4 &-4 &2 &0 &-18 &6 &4 &0 &-2\\
0 &4 &2 &-2 &0 &0 &-2 &0 &-4 &-4 &4 &0 &4 &4 &4\\
0 &-2 &-2 &0 &0 &-2 &-2 &0 &4 &4 &4 &0 &4 &-4 &-4\\
0 &-2 &0 &0 &-2 &-2 &6 &0 &-2 &0 &-6 &4 &0 &-6 &4\\
4 &0 &-2 &0 &0 &0 &4 &0 &0 &0 &-4 &0 &0 &0 &0\\
0 &0 &0 &4 &0 &-4 &0 &4 &0 &0 &4 &-8 &4 &0 &0\\
0 &0 &0 &0 &4 &-8 &0 &0 &0 &0 &0 &0 &\end{array}$
}
\end{table}
\begin{table}[h!]
\caption{The sixth one of the $13$ tuples (corresponding to $\dim V_w^\text{ori}(4, 5)$ $=$ $13$) consisting of the coefficients in the order of $\check{G}_{4, 5} \cap \conn$ (Tables~\ref{t0a}--\ref{t0c})}\label{t0d6}
{\tiny
$\begin{array}{*{15}c}
3 &0 &-3 &-2 &-1 &3 &-1 &1 &-2 &-1 &9 &-4 &-2 &0 &-3\\
3 &0 &-2 &2 &0 &-2 &4 &-2 &0 &-3 &-2 &4 &1 &3 &2\\
-2 &-3 &-3 &2 &-1 &4 &2 &-4 &-3 &-1 &2 &2 &-4 &-2 &-4\\
0 &2 &-4 &0 &2 &0 &0 &-12 &-12 &0 &0 &1 &1 &2 &-2\\
-3 &1 &0 &0 &0 &-2 &3 &1 &2 &-4 &-1 &1 &1 &-1 &-4\\
2 &1 &3 &-2 &0 &0 &0 &1 &-3 &-2 &2 &1 &1 &2 &-3\\
3 &2 &-6 &5 &1 &-4 &-3 &4 &4 &-1 &3 &-6 &-4 &3 &0\\
-1 &-3 &0 &2 &1 &1 &0 &-1 &-2 &2 &-3 &-1 &6 &0 &-1\\
2 &0 &-2 &2 &-6 &4 &4 &-4 &-3 &1 &5 &-1 &3 &-5 &-3\\
3 &0 &0 &-2 &0 &2 &2 &-2 &0 &-1 &1 &1 &-3 &-1 &1\\
3 &-1 &-2 &-1 &4 &1 &3 &0 &-3 &0 &-5 &4 &-1 &0 &0\\
-1 &3 &-2 &-1 &3 &1 &-5 &1 &-1 &-1 &3 &-1 &-3 &1 &5\\
1 &1 &-1 &-3 &-3 &-1 &1 &1 &5 &1 &-3 &-1 &3 &-1 &-1\\
1 &-5 &1 &3 &-1 &2 &0 &0 &0 &-2 &-2 &0 &2 &-7 &3\\
5 &1 &1 &1 &-3 &-1 &-4 &0 &4 &-2 &4 &2 &0 &-4 &5\\
-3 &-5 &1 &-3 &3 &3 &-1 &1 &1 &-1 &-1 &-4 &4 &0 &0\\
3 &-1 &-7 &5 &-2 &2 &2 &-2 &1 &-1 &-1 &1 &2 &-4 &2\\
0 &-3 &3 &7 &-7 &2 &-4 &-2 &4 &5 &-3 &-3 &-1 &-6 &6\\
4 &-2 &-8 &4 &6 &0 &9 &-7 &-7 &3 &-3 &-1 &-1 &3 &6\\
0 &-2 &-2 &4 &-2 &0 &0 &-7 &3 &3 &-1 &1 &1 &0 &2\\
-5 &1 &0 &0 &3 &-1 &-2 &0 &-3 &3 &2 &-2 &1 &-1 &0\\
0 &-3 &1 &2 &0 &-3 &3 &4 &-8 &5 &-3 &-2 &4 &2 &0\\
-1 &1 &-2 &-2 &3 &-1 &-2 &-2 &5 &1 &0 &2 &-5 &1 &-2\\
-2 &5 &-3 &4 &2 &-5 &1 &2 &0 &-5 &1 &-2 &2 &3 &-1\\
3 &-3 &0 &0 &-3 &3 &0 &0 &-3 &3 &0 &0 &3 &-3 &0\\
0 &-3 &3 &0 &0 &3 &-3 &0 &0 &3 &-3 &0 &0 &-3 &3\\
0 &0 &3 &0 &-1 &0 &-3 &0 &1 &0 &-3 &0 &1 &0 &3\\
0 &-1 &0 &-3 &2 &3 &0 &3 &-2 &-3 &0 &3 &-2 &-3 &0\\
-3 &2 &3 &0 &3 &-3 &-2 &2 &-3 &3 &0 &0 &-3 &3 &2\\
-2 &3 &-3 &0 &0 &-3 &3 &0 &0 &1 &-1 &0 &0 &3 &-3\\
0 &0 &-1 &1 &0 &0 &0 &3 &3 &-2 &-2 &-5 &-1 &4 &-2\\
1 &1 &4 &-2 &3 &-1 &-4 &-4 &-1 &3 &-2 &4 &1 &1 &-2\\
4 &-1 &-5 &-2 &-2 &3 &3 &0 &0 &2 &-4 &0 &2 &-2 &-2\\
4 &-2 &4 &6 &-4 &0 &-4 &2 &-2 &0 &2 &2 &-2 &-2 &-2\\
-2 &4 &-2 &4 &4 &-4 &0 &-2 &2 &-2 &-1 &-2 &-1 &0 &5\\
0 &-3 &2 &3 &-2 &-5 &0 &-3 &4 &5 &-2 &1 &2 &1 &0\\
-3 &-2 &1 &0 &-1 &0 &3 &2 &3 &-4 &-5 &2 &-1 &0 &1\\
0 &5 &-4 &-3 &2 &3 &0 &-7 &0 &-3 &4 &5 &-2 &1 &2\\
1 &0 &-3 &-4 &3 &0 &-1 &0 &-1 &2 &3 &-2 &-3 &2 &3\\
-5 &0 &2 &-4 &4 &5 &-5 &-2 &0 &5 &-3 &3 &-3 &-2 &2\\
-1 &1 &0 &0 &0 &2 &-3 &1 &-2 &2 &1 &-1 &-1 &3 &-2\\
0 &1 &-3 &1 &-3 &0 &4 &0 &-2 &4 &0 &-1 &-1 &1 &1\\
-2 &0 &-2 &-5 &-1 &4 &3 &-1 &-4 &2 &8 &-1 &0 &-1 &2\\
-5 &3 &-2 &0 &0 &-3 &3 &0 &0 &3 &-3 &3 &-3 &0 &0\\
-3 &3 &0 &0 &0 &0 &3 &-3 &0 &0 &-3 &3 &-3 &3 &0\\
0 &3 &-3 &0 &0 &0 &0 &-6 &2 &1 &-3 &3 &-1 &3 &-1\\
5 &1 &-4 &4 &-2 &-2 &-2 &-2 &4 &-4 &1 &5 &-1 &3 &-1\\
3 &-3 &1 &2 &-6 &0 &0 &-3 &-1 &1 &-1 &1 &-3 &15 &15\\
2 &0 &0 &2 &-4 &0 &2 &-2 &-2 &0 &2 &-4 &0 &4 &0\\
0 &0 &0 &4 &0 &-4 &2 &0 &-2 &-2 &2 &0 &-4 &2 &0\\
0 &2 &2 &-2 &0 &2 &-4 &6 &-2 &-2 &8 &-4 &0 &0 &2\\
0 &-4 &-2 &2 &0 &0 &2 &0 &2 &4 &-2 &-4 &-4 &-2 &-2\\
0 &4 &0 &0 &0 &0 &4 &0 &-2 &-2 &-4 &-4 &-2 &4 &2\\
0 &2 &0 &0 &2 &2 &-2 &0 &2 &0 &4 &-2 &-4 &4 &-2\\
0 &0 &2 &0 &-4 &0 &0 &0 &0 &0 &0 &0 &0 &0 &0\\
0 &0 &0 &0 &0 &0 &0 &0 &0 &0 &0 &0 &0 &0 &0\\
0 &0 &0 &0 &0 &0 &0 &0 &0 &0 &0 &0 &\end{array}$
}
\end{table}
\begin{table}[h!]
\caption{The seventh one of the $13$ tuples (corresponding to $\dim V_w^\text{ori}(4, 5)$ $=$ $13$) consisting of the coefficients in the order of $\check{G}_{4, 5} \cap \conn$ (Tables~\ref{t0a}--\ref{t0c})}\label{t0d7}
{\tiny
$\begin{array}{*{15}c}
1 &-2 &0 &-1 &-2 &7 &0 &-3 &1 &0 &-1 &2 &-1 &-4 &2\\
1 &-2 &-1 &0 &2 &1 &3 &-1 &-2 &2 &-1 &3 &-3 &-2 &0\\
-1 &2 &1 &1 &-2 &-1 &1 &1 &1 &-2 &1 &2 &-1 &-1 &-1\\
0 &1 &-1 &0 &2 &0 &0 &-4 &-4 &0 &0 &0 &0 &1 &-1\\
-1 &1 &0 &0 &0 &0 &-1 &1 &1 &-1 &0 &0 &0 &0 &-1\\
1 &1 &-1 &0 &0 &0 &0 &1 &-1 &-1 &1 &0 &0 &-3 &2\\
4 &-3 &1 &0 &-2 &1 &5 &-4 &-6 &5 &-3 &2 &4 &-3 &1\\
0 &-2 &1 &1 &-2 &0 &1 &-3 &2 &4 &-3 &1 &0 &-2 &1\\
-3 &3 &3 &-3 &1 &-1 &-1 &1 &5 &-5 &-5 &5 &-3 &3 &3\\
-3 &1 &-1 &-1 &1 &1 &-1 &-1 &1 &-3 &3 &3 &-3 &1 &-1\\
-1 &1 &0 &-1 &0 &1 &1 &0 &-1 &0 &-1 &0 &1 &0 &0\\
-1 &1 &0 &0 &-2 &2 &0 &-1 &3 &-1 &-1 &-1 &3 &-1 &-1\\
2 &-4 &0 &2 &2 &0 &-4 &2 &-1 &-1 &3 &-1 &-1 &-1 &3\\
-1 &0 &2 &-2 &0 &-3 &1 &3 &-1 &3 &-1 &-3 &1 &3 &-1\\
-3 &1 &-3 &1 &3 &-1 &3 &-1 &-3 &1 &-3 &1 &3 &-1 &-3\\
1 &3 &-1 &3 &-1 &-3 &1 &4 &-4 &-2 &2 &-4 &4 &2 &-2\\
-2 &2 &0 &0 &2 &-2 &0 &0 &-4 &4 &2 &-2 &4 &-4 &-2\\
2 &2 &-2 &0 &0 &-2 &2 &0 &0 &-6 &4 &6 &-4 &6 &-4\\
-6 &4 &6 &-4 &-6 &4 &-6 &4 &6 &-4 &4 &-2 &-4 &2 &-4\\
2 &4 &-2 &-4 &2 &4 &-2 &4 &-2 &-4 &2 &4 &-4 &-2 &2\\
-4 &4 &2 &-2 &-2 &2 &0 &0 &2 &-2 &0 &0 &-4 &4 &2\\
-2 &4 &-4 &-2 &2 &2 &-2 &0 &0 &-2 &2 &0 &0 &-3 &1\\
4 &-2 &3 &-1 &-4 &2 &2 &0 &-3 &1 &-2 &0 &3 &-1 &3\\
-1 &-4 &2 &-3 &1 &4 &-2 &-2 &0 &3 &-1 &2 &0 &-3 &1\\
-2 &1 &3 &-2 &2 &-1 &-3 &2 &2 &-1 &-3 &2 &-2 &1 &3\\
-2 &2 &-1 &-3 &2 &-2 &1 &3 &-2 &-2 &1 &3 &-2 &2 &-1\\
-3 &2 &-2 &1 &3 &-2 &2 &-1 &-3 &2 &2 &-1 &-3 &2 &-2\\
1 &3 &-2 &2 &-1 &-3 &2 &-2 &1 &3 &-2 &-2 &1 &3 &-2\\
2 &-1 &-3 &2 &-2 &1 &2 &-1 &3 &-2 &-3 &2 &2 &-1 &-2\\
1 &-3 &2 &3 &-2 &2 &-1 &-2 &1 &-3 &2 &3 &-2 &-2 &1\\
2 &-1 &3 &-2 &-3 &2 &1 &-1 &-2 &2 &0 &0 &1 &-1 &-1\\
1 &2 &-2 &0 &0 &-1 &1 &1 &-1 &0 &0 &-2 &2 &1 &-1\\
-1 &1 &0 &0 &2 &-2 &-1 &1 &1 &-3 &0 &2 &0 &2 &-1\\
-1 &3 &-2 &-2 &1 &1 &0 &0 &-1 &1 &-3 &0 &2 &0 &2\\
-1 &-1 &3 &-2 &-2 &1 &1 &0 &0 &-1 &3 &-3 &-2 &2 &-1\\
1 &0 &0 &-4 &4 &3 &-3 &2 &-2 &-1 &1 &-3 &3 &2 &-2\\
1 &-1 &0 &0 &4 &-4 &-3 &3 &-2 &2 &1 &-1 &3 &-2 &-3\\
2 &-1 &0 &1 &0 &-4 &3 &4 &-3 &2 &-1 &-2 &1 &-3 &2\\
3 &-2 &1 &0 &-1 &0 &4 &-3 &-4 &3 &-2 &1 &2 &-1 &-4\\
3 &4 &-3 &4 &-3 &-4 &3 &3 &-2 &-3 &2 &-3 &2 &3 &-2\\
2 &-1 &-2 &1 &-2 &1 &2 &-1 &-1 &0 &1 &0 &1 &0 &-1\\
0 &0 &0 &0 &0 &0 &0 &0 &0 &0 &0 &0 &0 &0 &0\\
0 &0 &0 &0 &1 &-1 &0 &-1 &1 &0 &-1 &1 &1 &-1 &0\\
0 &0 &0 &0 &2 &-1 &-1 &0 &-2 &1 &1 &1 &-3 &0 &2\\
-1 &3 &0 &-2 &-2 &0 &3 &-1 &2 &0 &-3 &1 &1 &1 &-2\\
0 &-1 &-1 &2 &0 &0 &0 &0 &0 &0 &0 &0 &0 &0 &0\\
0 &0 &0 &0 &0 &0 &0 &0 &0 &0 &0 &0 &0 &0 &0\\
0 &0 &0 &0 &0 &0 &0 &0 &0 &0 &0 &0 &0 &0 &0\\
1 &0 &-3 &2 &0 &-1 &2 &-1 &0 &-1 &2 &-1 &-1 &2 &-1\\
0 &0 &-1 &2 &-1 &-1 &2 &-1 &0 &-1 &2 &-1 &0 &2 &-3\\
0 &1 &1 &-2 &-1 &2 &0 &1 &0 &-1 &1 &-1 &0 &0 &1\\
-1 &0 &0 &1 &0 &-3 &2 &0 &-1 &2 &-1 &0 &-1 &2 &-1\\
-1 &2 &-1 &0 &0 &-1 &2 &-1 &-1 &2 &-1 &0 &-1 &2 &-1\\
0 &2 &-3 &0 &1 &1 &-2 &-1 &2 &0 &1 &-1 &0 &1 &-1\\
0 &0 &1 &-1 &0 &0 &0 &0 &0 &0 &0 &0 &0 &0 &0\\
0 &0 &0 &0 &0 &0 &0 &0 &0 &0 &0 &0 &0 &0 &0\\
0 &0 &0 &0 &0 &0 &0 &0 &0 &0 &0 &0 &\end{array}$
}
\end{table}
\begin{table}[h!]
\caption{The eighth one of the $13$ tuples (corresponding to $\dim V_w^\text{ori}(4, 5)$ $=$ $13$) consisting of the coefficients in the order of $\check{G}_{4, 5} \cap \conn$ (Tables~\ref{t0a}--\ref{t0c})}\label{t0d8}
{\tiny
$\begin{array}{*{15}c}
-1 &1 &1 &1 &1 &-4 &0 &1 &0 &0 &-2 &0 &1 &2 &0\\
-1 &1 &1 &-1 &0 &0 &-2 &1 &0 &-1 &1 &-2 &1 &1 &-1\\
1 &0 &0 &-1 &1 &1 &-1 &-1 &0 &1 &-1 &-1 &1 &1 &1\\
-1 &-1 &1 &0 &-2 &0 &0 &4 &4 &0 &0 &0 &0 &-1 &1\\
1 &-1 &0 &0 &0 &0 &1 &-1 &-1 &1 &0 &0 &0 &0 &1\\
-1 &-1 &1 &0 &0 &0 &0 &-1 &1 &1 &-1 &0 &0 &1 &0\\
-2 &1 &0 &-1 &1 &0 &-2 &1 &3 &-2 &1 &0 &-2 &1 &0\\
-1 &1 &0 &-1 &2 &0 &-1 &1 &0 &-2 &1 &0 &-1 &1 &0\\
1 &-1 &-1 &1 &0 &0 &0 &0 &-2 &2 &2 &-2 &1 &-1 &-1\\
1 &0 &0 &0 &0 &-1 &1 &1 &-1 &1 &-1 &-1 &1 &0 &0\\
0 &0 &0 &1 &0 &-1 &-1 &0 &1 &0 &1 &0 &-1 &0 &0\\
1 &-1 &0 &0 &1 &-1 &0 &1 &-2 &0 &1 &0 &-1 &1 &0\\
-1 &2 &0 &-1 &-1 &0 &2 &-1 &0 &1 &-1 &0 &1 &0 &-2\\
1 &0 &-1 &1 &0 &1 &0 &-1 &0 &-1 &0 &1 &0 &-1 &0\\
1 &0 &1 &0 &-1 &0 &-1 &0 &1 &0 &1 &0 &-1 &0 &1\\
0 &-1 &0 &-1 &0 &1 &0 &-1 &2 &0 &-1 &1 &-2 &0 &1\\
0 &-1 &1 &0 &0 &1 &-1 &0 &1 &-2 &0 &1 &-1 &2 &0\\
-1 &0 &1 &-1 &0 &0 &-1 &1 &0 &2 &-1 &-2 &1 &-2 &1\\
2 &-1 &-2 &1 &2 &-1 &2 &-1 &-2 &1 &-1 &0 &1 &0 &1\\
0 &-1 &0 &1 &0 &-1 &0 &-1 &0 &1 &0 &-1 &2 &0 &-1\\
1 &-2 &0 &1 &0 &-1 &1 &0 &0 &1 &-1 &0 &1 &-2 &0\\
1 &-1 &2 &0 &-1 &0 &1 &-1 &0 &0 &-1 &1 &0 &1 &0\\
-1 &0 &-1 &0 &1 &0 &-1 &0 &1 &0 &1 &0 &-1 &0 &-1\\
0 &1 &0 &1 &0 &-1 &0 &1 &0 &-1 &0 &-1 &0 &1 &0\\
1 &0 &-1 &0 &-1 &0 &1 &0 &-1 &0 &1 &0 &1 &0 &-1\\
0 &-1 &0 &1 &0 &1 &0 &-1 &0 &1 &0 &-1 &0 &-1 &0\\
1 &0 &1 &0 &-1 &0 &-1 &0 &1 &0 &-1 &0 &1 &0 &1\\
0 &-1 &0 &-1 &0 &1 &0 &1 &0 &-1 &0 &1 &0 &-1 &0\\
-1 &0 &1 &0 &1 &0 &-1 &0 &-1 &0 &1 &0 &-1 &0 &1\\
0 &1 &0 &-1 &0 &-1 &0 &1 &0 &1 &0 &-1 &0 &1 &0\\
-1 &0 &-1 &0 &1 &0 &0 &0 &1 &-1 &0 &0 &-1 &1 &0\\
0 &-1 &1 &0 &0 &1 &-1 &-1 &1 &0 &0 &1 &-1 &0 &0\\
1 &-1 &0 &0 &-1 &1 &0 &0 &0 &1 &0 &-1 &0 &-1 &0\\
1 &-1 &1 &1 &-1 &0 &0 &0 &0 &0 &1 &0 &-1 &0 &-1\\
0 &1 &-1 &1 &1 &-1 &0 &0 &0 &0 &-1 &1 &1 &-1 &0\\
0 &0 &0 &1 &-1 &-1 &1 &0 &0 &0 &0 &1 &-1 &-1 &1\\
0 &0 &0 &0 &-1 &1 &1 &-1 &0 &0 &0 &0 &-1 &1 &1\\
-1 &0 &0 &0 &0 &1 &-1 &-1 &1 &0 &0 &0 &0 &1 &-1\\
-1 &1 &0 &0 &0 &0 &-1 &1 &1 &-1 &0 &0 &0 &0 &1\\
-1 &-1 &1 &-1 &1 &1 &-1 &-1 &1 &1 &-1 &1 &-1 &-1 &1\\
0 &0 &0 &0 &0 &0 &0 &0 &0 &0 &0 &0 &0 &0 &0\\
0 &0 &0 &0 &0 &0 &0 &0 &0 &0 &0 &0 &0 &0 &0\\
0 &0 &0 &0 &-1 &1 &0 &1 &-1 &0 &1 &-1 &-1 &1 &0\\
0 &0 &0 &0 &-1 &0 &1 &0 &1 &0 &-1 &0 &1 &0 &-1\\
0 &-1 &0 &1 &1 &0 &-1 &0 &-1 &0 &1 &0 &-1 &0 &1\\
0 &1 &0 &-1 &0 &0 &0 &0 &0 &0 &0 &0 &0 &0 &0\\
0 &0 &0 &0 &0 &0 &0 &0 &0 &0 &0 &0 &0 &0 &0\\
0 &0 &0 &0 &0 &0 &0 &0 &0 &0 &0 &0 &0 &0 &0\\
0 &0 &1 &-1 &0 &0 &-1 &1 &0 &0 &-1 &1 &0 &0 &1\\
-1 &-1 &1 &0 &0 &1 &-1 &0 &0 &1 &-1 &0 &0 &-1 &1\\
0 &0 &0 &1 &0 &-1 &0 &-1 &0 &1 &-1 &1 &1 &-1 &0\\
0 &0 &0 &0 &0 &1 &-1 &0 &0 &-1 &1 &0 &0 &-1 &1\\
0 &0 &1 &-1 &-1 &1 &0 &0 &1 &-1 &0 &0 &1 &-1 &0\\
0 &-1 &1 &0 &0 &0 &1 &0 &-1 &0 &-1 &1 &0 &-1 &1\\
1 &-1 &0 &0 &0 &0 &0 &0 &0 &0 &0 &0 &0 &0 &0\\
0 &0 &0 &0 &0 &0 &0 &0 &0 &0 &0 &0 &0 &0 &0\\
0 &0 &0 &0 &0 &0 &0 &0 &0 &0 &0 &0 &\end{array}$
}
\end{table}
\begin{table}[h!]
\caption{The ninth one of the $13$ tuples (corresponding to $\dim V_w^\text{ori}(4, 5)$ $=$ $13$) consisting of the coefficients in the order of $\check{G}_{4, 5} \cap \conn$ (Tables~\ref{t0a}--\ref{t0c})}\label{t0d9}
{\tiny
$\begin{array}{*{15}c}
0 &2 &-1 &0 &1 &-4 &0 &2 &-2 &0 &4 &-3 &0 &3 &-2\\
0 &2 &0 &1 &-1 &-2 &-1 &0 &1 &-3 &0 &-1 &2 &3 &1\\
0 &-2 &-1 &0 &1 &2 &0 &-2 &-1 &1 &0 &0 &0 &0 &0\\
0 &0 &0 &0 &0 &0 &0 &0 &0 &0 &0 &0 &0 &-1 &1\\
0 &0 &1 &-1 &1 &-1 &0 &0 &-1 &1 &0 &0 &0 &0 &1\\
-1 &0 &0 &-1 &1 &-1 &1 &0 &0 &1 &-1 &0 &0 &3 &-2\\
-4 &3 &-2 &1 &3 &-2 &-4 &3 &5 &-4 &3 &-2 &-4 &3 &-1\\
0 &2 &-1 &0 &1 &-1 &0 &2 &-1 &-3 &2 &-1 &0 &2 &-1\\
3 &-3 &-3 &3 &-2 &2 &2 &-2 &-4 &4 &4 &-4 &3 &-3 &-3\\
3 &-1 &1 &1 &-1 &0 &0 &0 &0 &2 &-2 &-2 &2 &-1 &1\\
1 &-1 &0 &1 &0 &-1 &-1 &0 &1 &0 &1 &0 &-1 &0 &0\\
1 &-1 &0 &0 &2 &-1 &-1 &0 &-2 &1 &1 &1 &-3 &0 &2\\
-1 &3 &0 &-2 &-2 &0 &3 &-1 &2 &0 &-3 &1 &1 &1 &-2\\
0 &-1 &-1 &2 &0 &3 &-1 &-3 &1 &-3 &1 &3 &-1 &-3 &1\\
3 &-1 &3 &-1 &-3 &1 &-3 &1 &3 &-1 &3 &-1 &-3 &1 &3\\
-1 &-3 &1 &-3 &1 &3 &-1 &-3 &3 &2 &-2 &3 &-3 &-2 &2\\
2 &-2 &-1 &1 &-2 &2 &1 &-1 &3 &-3 &-2 &2 &-3 &3 &2\\
-2 &-2 &2 &1 &-1 &2 &-2 &-1 &1 &6 &-4 &-6 &4 &-6 &4\\
6 &-4 &-6 &4 &6 &-4 &6 &-4 &-6 &4 &-4 &2 &4 &-2 &4\\
-2 &-4 &2 &4 &-2 &-4 &2 &-4 &2 &4 &-2 &-3 &3 &2 &-2\\
3 &-3 &-2 &2 &2 &-2 &-1 &1 &-2 &2 &1 &-1 &3 &-3 &-2\\
2 &-3 &3 &2 &-2 &-2 &2 &1 &-1 &2 &-2 &-1 &1 &3 &-1\\
-4 &2 &-3 &1 &4 &-2 &-2 &0 &3 &-1 &2 &0 &-3 &1 &-3\\
1 &4 &-2 &3 &-1 &-4 &2 &2 &0 &-3 &1 &-2 &0 &3 &-1\\
3 &-2 &-3 &2 &-2 &1 &2 &-1 &-3 &2 &3 &-2 &2 &-1 &-2\\
1 &-3 &2 &3 &-2 &2 &-1 &-2 &1 &3 &-2 &-3 &2 &-2 &1\\
2 &-1 &3 &-2 &-3 &2 &-2 &1 &2 &-1 &-3 &2 &3 &-2 &2\\
-1 &-2 &1 &-3 &2 &3 &-2 &2 &-1 &-2 &1 &3 &-2 &-3 &2\\
-2 &1 &2 &-1 &3 &-2 &-2 &1 &-3 &2 &2 &-1 &-3 &2 &2\\
-1 &3 &-2 &-2 &1 &-3 &2 &2 &-1 &3 &-2 &-2 &1 &3 &-2\\
-2 &1 &-3 &2 &2 &-1 &0 &1 &1 &-2 &0 &-1 &-1 &2 &0\\
-1 &-1 &2 &0 &1 &1 &-2 &-2 &1 &1 &0 &2 &-1 &-1 &0\\
2 &-1 &-1 &0 &-2 &1 &1 &0 &0 &2 &0 &-2 &0 &-2 &0\\
2 &-2 &2 &2 &-2 &0 &0 &0 &0 &0 &2 &0 &-2 &0 &-2\\
0 &2 &-2 &2 &2 &-2 &0 &0 &0 &0 &-2 &2 &1 &-1 &2\\
-2 &-1 &1 &3 &-3 &-2 &2 &-3 &3 &2 &-2 &2 &-2 &-1 &1\\
-2 &2 &1 &-1 &-3 &3 &2 &-2 &3 &-3 &-2 &2 &-2 &1 &2\\
-1 &2 &-1 &-2 &1 &3 &-2 &-3 &2 &-3 &2 &3 &-2 &2 &-1\\
-2 &1 &-2 &1 &2 &-1 &-3 &2 &3 &-2 &3 &-2 &-3 &2 &4\\
-3 &-5 &4 &-3 &2 &4 &-3 &-3 &2 &4 &-3 &2 &-1 &-3 &2\\
-2 &1 &3 &-2 &1 &0 &-2 &1 &1 &0 &-2 &1 &0 &-1 &1\\
0 &1 &-1 &-1 &1 &0 &0 &0 &0 &0 &0 &1 &-1 &1 &-1\\
0 &0 &0 &-1 &-1 &2 &1 &1 &-2 &0 &0 &-1 &0 &1 &0\\
-1 &1 &0 &0 &-2 &0 &2 &0 &2 &0 &-2 &0 &2 &0 &-2\\
0 &-2 &0 &2 &2 &0 &-2 &0 &-2 &0 &2 &0 &-2 &0 &2\\
0 &2 &0 &-2 &0 &0 &0 &0 &0 &0 &0 &0 &0 &0 &0\\
0 &0 &0 &0 &0 &0 &0 &0 &0 &0 &0 &0 &0 &0 &0\\
0 &0 &0 &0 &0 &0 &0 &0 &0 &0 &0 &0 &0 &0 &0\\
0 &-1 &1 &0 &0 &1 &-1 &0 &0 &1 &-1 &0 &0 &-1 &1\\
0 &0 &1 &-1 &0 &0 &-1 &1 &0 &0 &-1 &1 &0 &0 &1\\
-1 &0 &0 &0 &0 &0 &0 &0 &0 &0 &0 &0 &0 &0 &0\\
0 &0 &0 &0 &-1 &1 &0 &0 &1 &-1 &0 &0 &1 &-1 &0\\
0 &-1 &1 &0 &0 &1 &-1 &0 &0 &-1 &1 &0 &0 &-1 &1\\
0 &0 &1 &-1 &0 &0 &0 &0 &0 &0 &0 &0 &0 &0 &0\\
0 &0 &0 &0 &0 &0 &0 &0 &0 &0 &0 &0 &0 &0 &0\\
0 &0 &0 &0 &0 &0 &0 &0 &0 &0 &0 &0 &0 &0 &0\\
0 &0 &0 &0 &0 &0 &0 &0 &0 &0 &0 &0 &\end{array}$
}
\end{table}
\begin{table}[h!]
\caption{The tenth one of the $13$ tuples (corresponding to $\dim V_w^\text{ori}(4, 5)$ $=$ $13$) consisting of the coefficients in the order of $\check{G}_{4, 5} \cap \conn$ (Tables~\ref{t0a}--\ref{t0c})}\label{t0d10}
{\tiny
$\begin{array}{*{15}c}
0 &-1 &1 &0 &-1 &3 &0 &-2 &1 &0 &-3 &2 &0 &-2 &2\\
0 &-1 &0 &0 &1 &1 &0 &0 &-1 &2 &0 &0 &-2 &-2 &0\\
0 &2 &1 &0 &-1 &-2 &0 &2 &1 &-1 &0 &0 &0 &0 &0\\
0 &0 &0 &0 &0 &0 &0 &0 &0 &0 &0 &0 &0 &0 &0\\
0 &0 &0 &0 &0 &0 &0 &0 &0 &0 &0 &0 &0 &0 &0\\
0 &0 &0 &0 &0 &0 &0 &0 &0 &0 &0 &0 &0 &-2 &2\\
2 &-2 &2 &-2 &-2 &2 &3 &-3 &-3 &3 &-3 &3 &3 &-3 &0\\
0 &0 &0 &0 &0 &0 &0 &-1 &1 &1 &-1 &1 &-1 &-1 &1\\
-2 &2 &2 &-2 &2 &-2 &-2 &2 &3 &-3 &-3 &3 &-3 &3 &3\\
-3 &0 &0 &0 &0 &0 &0 &0 &0 &-1 &1 &1 &-1 &1 &-1\\
-1 &1 &0 &0 &0 &0 &0 &0 &0 &0 &0 &0 &0 &0 &0\\
0 &0 &0 &0 &-2 &0 &2 &0 &2 &0 &-2 &0 &2 &0 &-2\\
0 &-2 &0 &2 &2 &0 &-2 &0 &-2 &0 &2 &0 &-2 &0 &2\\
0 &2 &0 &-2 &0 &-2 &0 &2 &0 &2 &0 &-2 &0 &3 &-1\\
-3 &1 &-3 &1 &3 &-1 &2 &0 &-2 &0 &-2 &0 &2 &0 &-3\\
1 &3 &-1 &3 &-1 &-3 &1 &2 &-2 &-2 &2 &-1 &1 &1 &-1\\
-2 &2 &2 &-2 &1 &-1 &-1 &1 &-2 &2 &2 &-2 &1 &-1 &-1\\
1 &2 &-2 &-2 &2 &-1 &1 &1 &-1 &-4 &2 &4 &-2 &5 &-3\\
-5 &3 &5 &-3 &-5 &3 &-6 &4 &6 &-4 &2 &0 &-2 &0 &-3\\
1 &3 &-1 &-3 &1 &3 &-1 &4 &-2 &-4 &2 &2 &-2 &-1 &1\\
-2 &2 &1 &-1 &-2 &2 &1 &-1 &2 &-2 &-1 &1 &-2 &2 &1\\
-1 &2 &-2 &-1 &1 &2 &-2 &-1 &1 &-2 &2 &1 &-1 &-2 &0\\
3 &-1 &2 &0 &-3 &1 &2 &0 &-3 &1 &-2 &0 &3 &-1 &2\\
0 &-3 &1 &-2 &0 &3 &-1 &-2 &0 &3 &-1 &2 &0 &-3 &1\\
-2 &2 &1 &-1 &2 &-2 &-1 &1 &2 &-2 &-1 &1 &-2 &2 &1\\
-1 &2 &-2 &-1 &1 &-2 &2 &1 &-1 &-2 &2 &1 &-1 &2 &-2\\
-1 &1 &-2 &1 &2 &-1 &2 &-1 &-2 &1 &2 &-1 &-2 &1 &-2\\
1 &2 &-1 &2 &-1 &-2 &1 &-2 &1 &2 &-1 &-2 &1 &2 &-1\\
2 &-1 &-2 &1 &-2 &2 &1 &-1 &2 &-2 &-1 &1 &2 &-2 &-1\\
1 &-2 &2 &1 &-1 &2 &-2 &-1 &1 &-2 &2 &1 &-1 &-2 &2\\
1 &-1 &2 &-2 &-1 &1 &0 &-1 &-1 &2 &0 &1 &1 &-2 &0\\
1 &1 &-2 &0 &-1 &-1 &2 &2 &-1 &-1 &0 &-2 &1 &1 &0\\
-2 &1 &1 &0 &2 &-1 &-1 &0 &0 &-2 &0 &2 &0 &2 &0\\
-2 &2 &-2 &-2 &2 &0 &0 &0 &0 &0 &-2 &0 &2 &0 &2\\
0 &-2 &2 &-2 &-2 &2 &0 &0 &0 &0 &2 &-1 &-2 &1 &-2\\
1 &2 &-1 &-2 &1 &2 &-1 &2 &-1 &-2 &1 &-2 &1 &2 &-1\\
2 &-1 &-2 &1 &2 &-1 &-2 &1 &-2 &1 &2 &-1 &2 &-1 &-2\\
1 &-2 &1 &2 &-1 &-2 &1 &2 &-1 &2 &-1 &-2 &1 &-2 &1\\
2 &-1 &2 &-1 &-2 &1 &2 &-1 &-2 &1 &-2 &1 &2 &-1 &-3\\
3 &3 &-3 &3 &-3 &-3 &3 &2 &-2 &-2 &2 &-2 &2 &2 &-2\\
1 &-1 &-1 &1 &-1 &1 &1 &-1 &0 &0 &0 &0 &0 &0 &0\\
0 &0 &0 &0 &0 &0 &0 &0 &0 &0 &0 &0 &0 &0 &0\\
0 &0 &0 &1 &1 &-2 &-1 &-1 &2 &0 &0 &1 &0 &-1 &0\\
1 &-1 &0 &0 &1 &1 &-2 &0 &-1 &-1 &2 &-1 &0 &0 &1\\
1 &0 &0 &-1 &-1 &0 &0 &1 &1 &0 &0 &-1 &2 &-1 &-1\\
0 &-2 &1 &1 &0 &-1 &0 &2 &-1 &0 &1 &-1 &0 &0 &1\\
-1 &0 &1 &-2 &0 &1 &1 &0 &-2 &1 &0 &-1 &1 &0 &0\\
-1 &1 &0 &-1 &2 &0 &-1 &0 &0 &0 &0 &0 &0 &0 &0\\
0 &0 &0 &0 &0 &0 &0 &0 &0 &0 &0 &0 &0 &0 &0\\
0 &0 &0 &0 &0 &0 &0 &0 &0 &0 &0 &0 &0 &0 &0\\
0 &0 &0 &0 &0 &0 &0 &0 &0 &0 &0 &0 &0 &0 &0\\
0 &0 &0 &0 &0 &0 &0 &0 &0 &0 &0 &0 &0 &0 &0\\
0 &0 &0 &0 &0 &0 &0 &0 &0 &0 &0 &0 &0 &0 &0\\
0 &0 &0 &0 &0 &0 &0 &0 &0 &0 &0 &0 &0 &0 &0\\
0 &0 &0 &0 &0 &0 &0 &0 &0 &0 &0 &0 &0 &0 &0\\
0 &0 &0 &0 &0 &0 &0 &0 &0 &0 &0 &0 &0 &0 &0\\
0 &0 &0 &0 &0 &0 &0 &0 &0 &0 &0 &0 &\end{array}$
}
\end{table}
\begin{table}[h!]
\caption{The eleventh one of the $13$ tuples (corresponding to $\dim V_w^\text{ori}(4, 5)$ $=$ $13$) consisting of the coefficients in the order of $\check{G}_{4, 5} \cap \conn$ (Tables~\ref{t0a}--\ref{t0c})}\label{t0d11}
{\tiny
$\begin{array}{*{15}c}
0 &0 &0 &0 &0 &0 &0 &0 &0 &0 &0 &0 &0 &0 &0\\
0 &0 &0 &0 &0 &0 &0 &0 &0 &0 &0 &0 &0 &0 &0\\
0 &0 &0 &0 &0 &0 &0 &0 &0 &0 &0 &0 &0 &0 &0\\
0 &0 &0 &0 &0 &0 &0 &0 &0 &0 &0 &0 &0 &-1 &1\\
0 &0 &1 &-1 &1 &-1 &0 &0 &-1 &1 &0 &0 &0 &0 &1\\
-1 &0 &0 &-1 &1 &-1 &1 &0 &0 &1 &-1 &0 &0 &0 &1\\
-1 &0 &0 &-1 &1 &0 &0 &-1 &1 &0 &0 &1 &-1 &0 &0\\
-1 &1 &0 &0 &1 &-1 &0 &0 &1 &-1 &0 &0 &-1 &1 &0\\
0 &0 &0 &0 &0 &0 &0 &0 &0 &0 &0 &0 &0 &0 &0\\
0 &0 &0 &0 &0 &0 &0 &0 &0 &0 &0 &0 &0 &0 &0\\
0 &0 &0 &1 &0 &-1 &-1 &0 &1 &0 &1 &0 &-1 &0 &0\\
1 &-1 &0 &0 &0 &0 &0 &0 &0 &0 &0 &0 &0 &0 &0\\
0 &0 &0 &0 &0 &0 &0 &0 &0 &0 &0 &0 &0 &0 &0\\
0 &0 &0 &0 &0 &0 &0 &0 &0 &0 &0 &0 &0 &0 &0\\
0 &0 &0 &0 &0 &0 &0 &0 &0 &0 &0 &0 &0 &0 &0\\
0 &0 &0 &0 &0 &0 &0 &0 &0 &0 &0 &0 &0 &0 &0\\
0 &0 &0 &0 &0 &0 &0 &0 &0 &0 &0 &0 &0 &0 &0\\
0 &0 &0 &0 &0 &0 &0 &0 &0 &0 &0 &0 &0 &0 &0\\
0 &0 &0 &0 &0 &0 &0 &0 &0 &0 &0 &0 &0 &0 &0\\
0 &0 &0 &0 &0 &0 &0 &0 &0 &0 &0 &0 &0 &0 &0\\
0 &0 &0 &0 &0 &0 &0 &0 &0 &0 &0 &0 &0 &0 &0\\
0 &0 &0 &0 &0 &0 &0 &0 &0 &0 &0 &0 &0 &0 &0\\
0 &0 &0 &0 &0 &0 &0 &0 &0 &0 &0 &0 &0 &0 &0\\
0 &0 &0 &0 &0 &0 &0 &0 &0 &0 &0 &0 &0 &0 &0\\
0 &0 &0 &0 &0 &0 &0 &0 &0 &0 &0 &0 &0 &0 &0\\
0 &0 &0 &0 &0 &0 &0 &0 &0 &0 &0 &0 &0 &0 &0\\
0 &0 &0 &0 &0 &0 &0 &0 &0 &0 &0 &0 &0 &0 &0\\
0 &0 &0 &0 &0 &0 &0 &0 &0 &0 &0 &0 &0 &0 &0\\
0 &0 &0 &0 &0 &0 &0 &0 &0 &0 &0 &0 &0 &0 &0\\
0 &0 &0 &0 &0 &0 &0 &0 &0 &0 &0 &0 &0 &0 &0\\
0 &0 &0 &0 &0 &0 &0 &0 &0 &0 &0 &0 &0 &0 &0\\
0 &0 &0 &0 &0 &0 &0 &0 &0 &0 &0 &0 &0 &0 &0\\
0 &0 &0 &0 &0 &0 &0 &0 &0 &0 &0 &0 &0 &0 &0\\
0 &0 &0 &0 &0 &0 &0 &0 &0 &0 &0 &0 &0 &0 &0\\
0 &0 &0 &0 &0 &0 &0 &0 &0 &0 &0 &0 &0 &0 &0\\
0 &0 &0 &0 &0 &0 &0 &0 &0 &0 &0 &0 &0 &0 &0\\
0 &0 &0 &0 &0 &0 &0 &0 &0 &0 &0 &0 &0 &0 &0\\
0 &0 &0 &0 &0 &0 &0 &0 &0 &0 &0 &0 &0 &0 &0\\
0 &0 &0 &0 &0 &0 &0 &0 &0 &0 &0 &0 &0 &0 &0\\
0 &-1 &1 &1 &-1 &0 &0 &0 &0 &1 &-1 &-1 &1 &0 &0\\
0 &0 &1 &-1 &-1 &1 &0 &0 &0 &0 &-1 &1 &1 &-1 &0\\
0 &1 &-1 &-1 &1 &0 &0 &0 &0 &0 &0 &1 &-1 &1 &-1\\
0 &0 &0 &0 &0 &0 &0 &0 &0 &0 &0 &0 &0 &0 &0\\
0 &0 &0 &0 &-1 &0 &1 &0 &1 &0 &-1 &0 &1 &0 &-1\\
0 &-1 &0 &1 &1 &0 &-1 &0 &-1 &0 &1 &0 &-1 &0 &1\\
0 &1 &0 &-1 &0 &0 &-1 &1 &0 &0 &1 &-1 &0 &0 &1\\
-1 &0 &0 &-1 &1 &0 &0 &1 &-1 &0 &0 &-1 &1 &0 &0\\
-1 &1 &0 &0 &1 &-1 &0 &0 &0 &0 &0 &0 &0 &0 &0\\
0 &0 &0 &0 &0 &0 &0 &0 &0 &0 &0 &0 &0 &0 &0\\
0 &0 &0 &0 &0 &0 &0 &0 &0 &0 &0 &0 &0 &0 &0\\
0 &0 &0 &0 &0 &0 &0 &0 &0 &0 &0 &0 &0 &0 &0\\
0 &0 &0 &0 &0 &0 &0 &0 &0 &0 &0 &0 &0 &0 &0\\
0 &0 &0 &0 &0 &0 &0 &0 &0 &0 &0 &0 &0 &0 &0\\
0 &0 &0 &0 &0 &0 &0 &0 &0 &0 &0 &0 &0 &0 &0\\
0 &0 &0 &0 &0 &0 &0 &0 &0 &0 &0 &0 &0 &0 &0\\
0 &0 &0 &0 &0 &0 &0 &0 &0 &0 &0 &0 &0 &0 &0\\
0 &0 &0 &0 &0 &0 &0 &0 &0 &0 &0 &0 &\end{array}$
}
\end{table}

\clearpage

\begin{table}[h!]
\caption{The twelfth one of the $13$ tuples (corresponding to $\dim V_w^\text{ori}(4, 5)$ $=$ $13$) consisting of the coefficients in the order of $\check{G}_{4, 5} \cap \conn$ (Tables~\ref{t0a}--\ref{t0c})}\label{t0d12}
{\tiny
$\begin{array}{*{15}c}
0 &0 &0 &0 &0 &0 &0 &0 &0 &0 &0 &0 &0 &0 &0\\
0 &0 &0 &0 &0 &0 &0 &0 &0 &0 &0 &0 &0 &0 &0\\
0 &0 &0 &0 &0 &0 &0 &0 &0 &0 &0 &0 &0 &0 &0\\
0 &0 &0 &0 &0 &0 &0 &0 &0 &0 &0 &0 &0 &0 &0\\
0 &0 &0 &0 &0 &0 &0 &0 &0 &0 &0 &0 &0 &0 &0\\
0 &0 &0 &0 &0 &0 &0 &0 &0 &0 &0 &0 &0 &0 &0\\
0 &0 &0 &0 &0 &0 &0 &0 &0 &0 &0 &0 &0 &0 &0\\
0 &0 &0 &0 &0 &0 &0 &0 &0 &0 &0 &0 &0 &0 &0\\
0 &0 &0 &0 &0 &0 &0 &0 &0 &0 &0 &0 &0 &0 &0\\
0 &0 &0 &0 &0 &0 &0 &0 &0 &0 &0 &0 &0 &0 &0\\
0 &0 &-1 &1 &1 &-1 &1 &1 &-1 &-1 &-1 &1 &-1 &1 &-1\\
1 &1 &-1 &0 &0 &0 &0 &0 &0 &0 &0 &0 &0 &0 &0\\
0 &0 &0 &0 &0 &0 &0 &0 &0 &0 &0 &0 &0 &0 &0\\
0 &0 &0 &0 &0 &1 &-1 &-1 &1 &1 &-1 &-1 &1 &-1 &1\\
1 &-1 &-1 &1 &1 &-1 &-1 &1 &1 &-1 &-1 &1 &1 &-1 &1\\
-1 &-1 &1 &1 &-1 &-1 &1 &0 &0 &0 &0 &0 &0 &0 &0\\
0 &0 &0 &0 &0 &0 &0 &0 &0 &0 &0 &0 &0 &0 &0\\
0 &0 &0 &0 &0 &0 &0 &0 &0 &1 &-1 &1 &-1 &-1 &1\\
-1 &1 &-1 &1 &-1 &1 &1 &-1 &1 &-1 &-1 &1 &-1 &1 &1\\
-1 &1 &-1 &1 &-1 &1 &-1 &-1 &1 &-1 &1 &0 &0 &0 &0\\
0 &0 &0 &0 &0 &0 &0 &0 &0 &0 &0 &0 &0 &0 &0\\
0 &0 &0 &0 &0 &0 &0 &0 &0 &0 &0 &0 &0 &0 &0\\
0 &0 &0 &0 &0 &0 &0 &0 &0 &0 &0 &0 &0 &0 &0\\
0 &0 &0 &0 &0 &0 &0 &0 &0 &0 &0 &0 &0 &0 &0\\
0 &0 &0 &0 &0 &0 &0 &0 &0 &0 &0 &0 &0 &0 &0\\
0 &0 &0 &0 &0 &0 &0 &0 &0 &0 &0 &0 &0 &0 &0\\
0 &0 &0 &0 &0 &0 &0 &0 &0 &0 &0 &0 &0 &0 &0\\
0 &0 &0 &0 &0 &0 &0 &0 &0 &0 &0 &0 &0 &0 &0\\
0 &0 &0 &0 &0 &0 &0 &0 &0 &0 &0 &0 &0 &0 &0\\
0 &0 &0 &0 &0 &0 &0 &0 &0 &0 &0 &0 &0 &0 &0\\
0 &0 &0 &0 &0 &0 &0 &0 &0 &0 &0 &0 &0 &0 &0\\
0 &0 &0 &0 &0 &0 &0 &0 &0 &0 &0 &0 &0 &0 &0\\
0 &0 &0 &0 &0 &0 &0 &0 &0 &0 &-2 &2 &2 &-2 &0\\
0 &0 &-2 &2 &0 &0 &-2 &2 &0 &0 &0 &0 &0 &0 &0\\
0 &0 &0 &0 &0 &0 &0 &0 &0 &0 &0 &0 &0 &0 &0\\
0 &0 &0 &0 &0 &0 &0 &0 &0 &0 &0 &0 &0 &0 &0\\
0 &0 &0 &0 &0 &0 &0 &0 &0 &0 &0 &0 &0 &0 &0\\
0 &0 &0 &0 &0 &0 &0 &0 &0 &0 &0 &0 &0 &0 &0\\
0 &0 &0 &0 &0 &0 &0 &0 &0 &0 &0 &0 &0 &0 &0\\
0 &0 &0 &0 &0 &0 &0 &0 &0 &0 &0 &0 &0 &0 &0\\
0 &0 &0 &0 &0 &0 &0 &0 &0 &0 &0 &0 &0 &0 &0\\
0 &1 &-1 &1 &-1 &-1 &1 &1 &-1 &1 &1 &-1 &-1 &1 &1\\
-1 &-1 &0 &0 &0 &0 &0 &0 &0 &0 &0 &0 &0 &0 &0\\
0 &0 &0 &0 &0 &0 &0 &0 &0 &0 &0 &0 &0 &0 &0\\
0 &0 &0 &0 &0 &0 &0 &0 &0 &0 &0 &0 &0 &0 &0\\
0 &0 &0 &0 &0 &0 &0 &0 &0 &0 &0 &0 &0 &0 &0\\
0 &0 &0 &0 &0 &0 &0 &0 &0 &0 &0 &0 &0 &0 &0\\
0 &0 &0 &0 &0 &0 &0 &0 &0 &0 &0 &0 &0 &0 &0\\
0 &0 &0 &0 &0 &0 &0 &0 &0 &0 &0 &0 &0 &0 &0\\
0 &0 &0 &0 &0 &0 &0 &0 &0 &0 &0 &0 &0 &0 &0\\
0 &0 &0 &0 &0 &0 &0 &0 &0 &0 &0 &0 &0 &0 &0\\
0 &0 &0 &0 &0 &0 &0 &0 &0 &0 &0 &0 &0 &0 &0\\
0 &0 &0 &0 &0 &0 &0 &0 &0 &0 &0 &0 &0 &0 &0\\
0 &0 &0 &0 &0 &0 &0 &0 &0 &0 &0 &0 &0 &0 &0\\
0 &0 &0 &0 &0 &0 &0 &0 &0 &0 &0 &0 &0 &0 &0\\
0 &0 &0 &0 &0 &0 &0 &0 &0 &0 &0 &0 &0 &0 &0\\
0 &0 &0 &0 &0 &0 &0 &0 &0 &0 &0 &0 &\end{array}$
}
\end{table}
\clearpage
\begin{table}[h!]
\caption{The thirteenth one of the $13$ tuples (corresponding to $\dim V_w^\text{ori}(4, 5)$ $=$ $13$) consisting of the coefficients in the order of $\check{G}_{4, 5} \cap \conn$ (Tables~\ref{t0a}--\ref{t0c})}\label{t0d13}
{\tiny
$\begin{array}{*{15}c}
0 &1 &-1 &0 &1 &-2 &0 &1 &-1 &0 &2 &-1 &0 &1 &-1\\
0 &1 &0 &0 &-1 &-1 &0 &0 &1 &-1 &0 &0 &1 &1 &0\\
0 &-1 &0 &0 &0 &0 &0 &0 &0 &0 &0 &0 &0 &0 &0\\
0 &0 &0 &0 &0 &0 &0 &0 &0 &0 &0 &0 &0 &0 &0\\
0 &0 &0 &0 &0 &0 &0 &0 &0 &0 &0 &0 &0 &0 &0\\
0 &0 &0 &0 &0 &0 &0 &0 &0 &0 &0 &0 &0 &1 &-1\\
-1 &1 &-1 &1 &1 &-1 &-1 &1 &1 &-1 &1 &-1 &-1 &1 &-1\\
1 &1 &-1 &1 &-1 &-1 &1 &1 &-1 &-1 &1 &-1 &1 &1 &-1\\
1 &-1 &-1 &1 &-1 &1 &1 &-1 &-1 &1 &1 &-1 &1 &-1 &-1\\
1 &-1 &1 &1 &-1 &1 &-1 &-1 &1 &1 &-1 &-1 &1 &-1 &1\\
1 &-1 &0 &0 &0 &0 &0 &0 &0 &0 &0 &0 &0 &0 &0\\
0 &0 &0 &0 &0 &0 &0 &0 &0 &0 &0 &0 &0 &0 &0\\
0 &0 &0 &0 &0 &0 &0 &0 &0 &0 &0 &0 &0 &0 &0\\
0 &0 &0 &0 &0 &1 &-1 &-1 &1 &-1 &1 &1 &-1 &-1 &1\\
1 &-1 &1 &-1 &-1 &1 &-1 &1 &1 &-1 &1 &-1 &-1 &1 &1\\
-1 &-1 &1 &-1 &1 &1 &-1 &-1 &1 &1 &-1 &1 &-1 &-1 &1\\
1 &-1 &-1 &1 &-1 &1 &1 &-1 &1 &-1 &-1 &1 &-1 &1 &1\\
-1 &-1 &1 &1 &-1 &1 &-1 &-1 &1 &2 &-2 &-2 &2 &-2 &2\\
2 &-2 &-2 &2 &2 &-2 &2 &-2 &-2 &2 &-2 &2 &2 &-2 &2\\
-2 &-2 &2 &2 &-2 &-2 &2 &-2 &2 &2 &-2 &-1 &1 &1 &-1\\
1 &-1 &-1 &1 &1 &-1 &-1 &1 &-1 &1 &1 &-1 &1 &-1 &-1\\
1 &-1 &1 &1 &-1 &-1 &1 &1 &-1 &1 &-1 &-1 &1 &1 &-1\\
-1 &1 &-1 &1 &1 &-1 &-1 &1 &1 &-1 &1 &-1 &-1 &1 &-1\\
1 &1 &-1 &1 &-1 &-1 &1 &1 &-1 &-1 &1 &-1 &1 &1 &-1\\
1 &-1 &-1 &1 &-1 &1 &1 &-1 &-1 &1 &1 &-1 &1 &-1 &-1\\
1 &-1 &1 &1 &-1 &1 &-1 &-1 &1 &1 &-1 &-1 &1 &-1 &1\\
1 &-1 &1 &-1 &-1 &1 &-1 &1 &1 &-1 &-1 &1 &1 &-1 &1\\
-1 &-1 &1 &-1 &1 &1 &-1 &1 &-1 &-1 &1 &1 &-1 &-1 &1\\
-1 &1 &1 &-1 &1 &-1 &-1 &1 &-1 &1 &1 &-1 &-1 &1 &1\\
-1 &1 &-1 &-1 &1 &-1 &1 &1 &-1 &1 &-1 &-1 &1 &1 &-1\\
-1 &1 &-1 &1 &1 &-1 &0 &0 &0 &0 &0 &0 &0 &0 &0\\
0 &0 &0 &0 &0 &0 &0 &0 &0 &0 &0 &0 &0 &0 &0\\
0 &0 &0 &0 &0 &0 &0 &0 &0 &0 &0 &0 &0 &0 &0\\
0 &0 &0 &0 &0 &0 &0 &0 &0 &0 &0 &0 &0 &0 &0\\
0 &0 &0 &0 &0 &0 &0 &0 &0 &0 &-1 &1 &1 &-1 &1\\
-1 &-1 &1 &1 &-1 &-1 &1 &-1 &1 &1 &-1 &1 &-1 &-1 &1\\
-1 &1 &1 &-1 &-1 &1 &1 &-1 &1 &-1 &-1 &1 &-1 &1 &1\\
-1 &1 &-1 &-1 &1 &1 &-1 &-1 &1 &-1 &1 &1 &-1 &1 &-1\\
-1 &1 &-1 &1 &1 &-1 &-1 &1 &1 &-1 &1 &-1 &-1 &1 &1\\
-1 &-1 &1 &-1 &1 &1 &-1 &-1 &1 &1 &-1 &1 &-1 &-1 &1\\
-1 &1 &1 &-1 &1 &-1 &-1 &1 &1 &-1 &-1 &1 &-1 &1 &1\\
-1 &0 &0 &0 &0 &0 &0 &0 &0 &0 &0 &0 &0 &0 &0\\
0 &0 &0 &0 &0 &0 &0 &0 &0 &0 &0 &0 &0 &0 &0\\
0 &0 &0 &0 &0 &0 &0 &0 &0 &0 &0 &0 &0 &0 &0\\
0 &0 &0 &0 &0 &0 &0 &0 &0 &0 &0 &0 &0 &0 &0\\
0 &0 &0 &0 &0 &0 &0 &0 &0 &0 &0 &0 &0 &0 &0\\
0 &0 &0 &0 &0 &0 &0 &0 &0 &0 &0 &0 &0 &0 &0\\
0 &0 &0 &0 &0 &0 &0 &0 &0 &0 &0 &0 &0 &0 &0\\
0 &0 &0 &0 &0 &0 &0 &0 &0 &0 &0 &0 &0 &0 &0\\
0 &0 &0 &0 &0 &0 &0 &0 &0 &0 &0 &0 &0 &0 &0\\
0 &0 &0 &0 &0 &0 &0 &0 &0 &0 &0 &0 &0 &0 &0\\
0 &0 &0 &0 &0 &0 &0 &0 &0 &0 &0 &0 &0 &0 &0\\
0 &0 &0 &0 &0 &0 &0 &0 &0 &0 &0 &0 &0 &0 &0\\
0 &0 &0 &0 &0 &0 &0 &0 &0 &0 &0 &0 &0 &0 &0\\
0 &0 &0 &0 &0 &0 &0 &0 &0 &0 &0 &0 &0 &0 &0\\
0 &0 &0 &0 &0 &0 &0 &0 &0 &0 &0 &0 &0 &0 &0\\
0 &0 &0 &0 &0 &0 &0 &0 &0 &0 &0 &0 &\end{array}$
}
\end{table}

\section*{Acknowledgements}
The authors would like to thank Dr.~Yuka Kotorii for discussions.  
N.~I. would like to thank Mr.~Yusuke Takimura, for creating and providing many figures of spherical curves for Tables~\ref{t3}--\ref{t1}, for his support and useful discussions.  N.~I. also would like to thank Professor Yukari Funakoshi, Ms.~Megumi Hashizume, and Professor Tsuyoshi Kobayashi for their comments on an earlier version of this paper.       
This work was partially supported by Aoyama Gakuin University-Supported Program ``Early Eagle Program".


\begin{thebibliography}{99}
\bibitem{arnold} V. I. Arnold, Topological invariants of plane curves and caustics.  Dean Jacqueline B.  Lewis Memorial Lectures presented as Rutgers University, New Brunswick, New Jersey.  University Lectures Series, 5.  {\it{American Mathematical Society, Providence, RI,}} 1994.  
\bibitem{bar-natan} D.~Bar-Natan, I.~Halacheva, L.~Leung, and F.~Roukema, Some dimensions of spaces of finite type invariants of virtual knots, \emph{Exp.\ Math.} {\bf{20}} (2011), 282--287.    
\bibitem{gpv} M.~Goussarov, M.~Polyak, O.~Viro, Finite-type invariants of classical and virtual knots, \emph{Topology} {\bf{39}} (2000), 1045--1068.  
\bibitem{HY} T. Hagge and J. Yazinski, On the necessity of Reidemeister move 2 for simplifying immersed planar curves, {\it{Banach Center Publ.\ }}{\bf{103}} (2014), 101--110.
\bibitem{HI} K.~Hayano and N.~Ito, A new aspect of the Arnold invariant $J^+$ from a global viewpoint, {\it{Indiana Univ.\ Math.\ J.}}  {\bf{64}} (2015), 1343--1357.   
\bibitem{ItoKP} N.~Ito, Knot projections, \emph{CRC Press, Boca Raton, FL,} 2016.   
\bibitem{Based_Ito} N.~Ito, Based chord diagrams of spherical  curves, Kodai Math. J. {\bf{41}} (2018), 375--396.  
\bibitem{Ito} N.~Ito, Space of chord diagrams on spherical curves, submitted preprint.   
\bibitem{ITw13}N. Ito and Y. Takimura, (1, 2) and weak (1, 3) homotopies on knot projections {\bf{22}} (2013), 1350085, {14pp}.  
\bibitem{IT3} N. Ito and Y. Takimura, Sub-chord diagrams of knot projections, {\it{Houston J. Math.}} {\bf{41}} (2015), 701--725.
\bibitem{IT32} N. Ito and Y. Takimura, Thirty-two equivalence relations on knot projections, \emph{Topology Appl.} 225 (2017), 130--138.  
\bibitem{ITT} N.~Ito, Y.~Takimura, and K.~Taniyama, Strong and weak (1, 3) homotopies on knot projections, {\it{Osaka J. Math.\ }} {\bf{52}} (2015), 617--646.   
\bibitem{ITT_ildt} N.~Ito, Y.~Takimura, and K.~Taniyama, Strong and weak (1, 3) homotopies on spherical curves and related topics, \emph{Intelligence of Low-dimensional Topology} {\bf{1960}} (2015), 101--106.   
\bibitem{ostlund} Olof-Peter \"{O}stlund, A diagrammatic approach to link invariants of finite degree, {\it{Math.\ Scand.}} {\bf{94}} (2004), 295--319.  
\bibitem{P} M.~Polyak, Invariants of curves and fronts via Gauss diagrams, \emph{Topology} {\bf{37}} (1998), 989--1009.   
\bibitem{PV} M. Polyak and O. Viro, Gauss diagram formulas for Vassiliev invariants, {\it{Internat.\ Math.\ Res.\ Notices}} {\bf{1994,}} 445ff., approx. 8pp. (electronic).  
\end{thebibliography}
\end{document}